\documentclass[a4paper,leqno,11pt]{amsart}
\usepackage[
top=30truemm,
 bottom=30truemm,
left=30truemm,
right=30truemm]{geometry}
\usepackage{amsfonts, amssymb, amsmath, verbatim, amsthm, mathrsfs, amscd, enumerate, ascmac, fancyhdr, caption, hyperref, framed,ulem,subfigure}
\usepackage{bbm}%mathbbm{1}
\usepackage{eucal}%fancy \mathcal
\usepackage{tikz}
\usetikzlibrary{shapes.geometric}
\usetikzlibrary{arrows.meta,arrows}
\usepackage{graphicx} % Required for inserting images
\usepackage{pgf, tikz}
\usepackage{pgfplots}
\pgfplotsset{compat=1.18}
\usepackage{float}
\usepackage{pdfpages}

\usepackage[framemethod=tikz]{mdframed}
\mdfsetup{linecolor=blue,backgroundcolor=gray!10,roundcorner=10pt,leftmargin=2pt,innerleftmargin=2pt,innerrightmargin=2pt}
\usepackage{url}

\usepackage{xcolor}
\usepackage{tikz}
\usetikzlibrary{positioning,intersections, calc,arrows,decorations.markings}

\newtheorem{theorem}{Theorem}[section]
\newtheorem{lemma}[theorem]{Lemma}
\newtheorem{proposition}[theorem]{Proposition}
\newtheorem*{theorem*}{Theorem}
\newtheorem{conjecture}{Conjecture}
\newtheorem*{conjecture*}{Conjecture}
\newtheorem{question}[theorem]{Question}

\newtheorem{corollary}[theorem]{Corollary}

\newtheorem*{claim*}{Claim}
\theoremstyle{definition}

\newtheorem*{goal*}{Goal}

\numberwithin{equation}{section}
\numberwithin{theorem}{section}

 \DeclareMathOperator{\dist}{dist}
 
\newtheorem{remark}[theorem]{Remark}

\def\supp{{\rm supp}}

\newcommand{\BL}{\operatorname{BL}}
\newcommand{\ABL}{\operatorname{ABL}}

\title[Stability for the adjoint Finner inequality]{Stability for the adjoint Finner inequality and improved lower bounds on the $k$-plane transform}
\author{Jonathan Bennett and Sewook Oh}\thanks{The first author was supported by the Engineering and Physical Sciences Research Council Grant EP/W032880/1. The second author was supported by KIAS Individual Grant SP089101 and a Global University Project grant funded by the GIST in 2026. Both authors thank Shohei Nakamura for numerous helpful discussions on this and related work.}
\address{Jonathan Bennett, School of Mathematics, University of Birmingham, Edgbaston, B15 2TT, UK}
\email{j.bennett@bham.ac.uk}
\address{Sewook Oh, Department of Mathematical Sciences, Gwangju Institute of Science and Technology, Republic of Korea}
\email{sewookoh@gist.ac.kr}
\date{October 2026}
\subjclass[2020]{44A12}
\keywords{Brascamp--Lieb inequalities, adjoint Brascamp--Lieb inequalities, R\'enyi entropy subadditivity, stability, near-extremisers, Radon transforms, tight fusion frames}
\begin{document}

\begin{abstract}
    We establish stability properties of near-extremisers for the Loomis--Whitney inequality and its adjoint, presenting our results in the general framework of Finner. In particular, we show that a function nearly achieves equality in the adjoint Finner inequality if and only if it is near to being an indicator function of a cartesian product. As an application, we obtain new  lower bounds for the classical $k$-plane transform on $L^p(\mathbb{R}^d)$, modestly improving those obtained recently by Tao and the first author. We also present an alternative approach to such improvements based on stability in the setting of the Brascamp--Lieb inequality and the theory of fusion frames. We conclude with an interpretation of our $k$-plane transform estimates as adjoint forms of the continuous Brascamp--Lieb inequalities.
\end{abstract}
\maketitle

\section{Introduction} 
The Loomis--Whitney inequality, originating in \cite{LW}, is a classical and elementary inequality with important applications in many areas, including harmonic analysis, partial differential equations, additive combinatorics and information theory. It has found a range of useful generalisations and extensions, notably within the Brascamp--Lieb and related frameworks in harmonic analysis; see for example \cite{Guth10, BB21, Zh22}. 

For $d\geq 2$ and $1\leq j\leq d$, let $P_j$ denote the orthogonal projection from $\mathbb R^d$ to $\langle e_j\rangle^\perp$, where $\{e_j: 1\leq j\leq d\}$ is the standard orthonormal basis of $\mathbb R^d$. The Loomis--Whitney inequality states that 
\begin{equation}\label{lw}
    \int_{\mathbb R^d} \prod_{j=1}^d g_j(P_jx)\mathrm{d}x\le\prod_{j=1}^d\|g_j\|_{L^{d-1}(\langle e_j\rangle^\perp)}
\end{equation} 
for all nonnegative measurable functions $g_j: \langle e_j\rangle^\perp\rightarrow \mathbb R_+$. This is a sharp inequality, and functions $(g_j)$ that achieve equality in \eqref{lw}, referred to as \textit{extremisers}, may be characterised explicitly, as we clarify shortly. 
The Loomis--Whitney inequality is a \textit{measure-theoretic} (as opposed to functional) inequality by origin, related to isoperimetry. In particular, setting $g_j=\mathrm{1}_{P_j(A)}$, one obtains the inequality
\begin{equation}
    \label{LWgeometric}
|A|^{d-1}\le \prod_{j=1}^d|P_j(A)|,
\end{equation}
valid for all measurable $A\subset\mathbb R^d$; here and throughout $|E|$ denotes the Lebesgue measure of a subset $E$ of Euclidean space.

The primary focus of this paper is a certain \textit{adjoint} form of the Loomis--Whitney inequality \eqref{lw} introduced recently by Tao and the first author in \cite{BT}. In its simplest form this states that
\begin{equation}\label{adjlw}
\|f\|_{L^p(\mathbb{R}^d)}\leq\prod_{j=1}^d\|(P_j)_*f\|_{L^q(\langle e_j\rangle^\perp)}^{\frac{1}{d}},
\end{equation}
where $(P_j)_*f:\langle e_j\rangle^\perp\rightarrow\mathbb{R}_+$ is the \textit{$j$th marginal} of a nonnegative function $f\in L^p(\mathbb{R}^d)$, effectively obtained by integrating $f(x)$ in the $j$th component $x_j$ of $x$. Here the exponents $p$ and $q$, which necessarily lie in the quasi-Banach range $0<p,q\leq 1$, are related by the elementary scaling relation 
$\frac{d}{p}-\frac{d-1}{q}=1$. Taking logarithms one may interpret \eqref{adjlw} as an extension of Shearer's entropy subadditivity to the setting of  R\'enyi entropies; see the forthcoming Remark \ref{renyisub}.

Just as for \eqref{lw}, the extremisers of \eqref{adjlw} have been characterised explicitly -- in the latter case they are simply positive constant multiples of indicator functions of cartesian products of subsets of $\mathbb{R}$; see \cite[Theorem 2.1]{BT}. In such situations it is natural (and common) to ask questions of stability. In our case we ask: 
\begin{quotation}
\textit{If a (nonnegative) function $f$ achieves near-equality in \eqref{adjlw}, then is it close to an extremiser in some concrete sense?}
\end{quotation}

In \cite{finner} Finner established a natural generalisation of the Loomis--Whitney inequality \eqref{lw} to the setting of product measure spaces, and this, and its adjoint, will be the natural setting for our stability results. We are not the first to tackle stability questions in this space. In \cite{EFKY} Ellis, Friedgut, Kindler and Yehudayoff considered the stability of the measure-theoretic form of Finner's inequality, proving among other things that if a set $A$ nearly attains equality in \eqref{LWgeometric} then its symmetric difference with an extremising set must have small measure. In order to prove a stability statement for the adjoint Finner inequality (the forthcoming Theorem \ref{thm:adfstable2}) we shall need
%Recently in \cite{BT}, Tao and the first author established an \textit{adjoint form} of Finner's inequality, and used it to prove some unexpected lower bounds on the classical $k$-plane transform in the scale of $L^p$ spaces for $0<p\leq 1$.
%The main purpose of this paper is to investigate the stability of near-extremisers for the adjoint form of Finner's inequality, and establish that inputs nearly achieving equality are near to actual extremisers, with respect to suitable metrics . 
a stability statement for the general functional (as opposed to measure-theoretic) form of Finner's original inequality. Such a statement has recently been provided by Cook and Dembo \cite{CD} with respect to an $L^1$-based metric. We establish a stronger $L^2$-based form of this with the optimal stability exponent, consistent with the measure-theoretic statements in \cite{EFKY}; see Remarks \ref{Rem:CD} and \ref{Rem:EFKY}.

As an application of our stability results for the adjoint inequality, we modestly improve the lower bounds for the $k$-plane transform established in \cite{BT}. This is motivated by Question 10.4 in \cite{BT}, and it is interesting to compare our broad approach here with that of Kova\v{c} and Rogers \cite{KR} in their recent answer to Question 10.1 of \cite{BT}. An additional ingredient in our argument is the classical Brascamp--Lieb--Luttinger rearrangement inequality, and it is interesting to compare this aspect with a similar step in work of Milman and Yehudayoff \cite{MY}. Finally, we describe an alternative approach to such $k$-plane transform estimates based on the notion of \textit{Gaussian--stability} for Brascamp--Lieb inequalities explored recently in \cite{BT,BC:stable}, and the theory of tight fusion frames.

Some wider contextual remarks are in order. Finner's inequality falls under the increasingly broad umbrella of Brascamp--Lieb theory, in which there is a growing literature relating to stability. This includes numerous works of Christ in the setting of inequalities for multilinear forms, such as Young's convolution inequality; see \cite{BC:stable} and the references there. We also point to the stability results for the Pr\'ekopa--Leindler and Borell--Brascamp--Lieb inequalities in work of Figalli, van Hintum and Tiba \cite{FHT}. 
We  refer the reader to these sources for further results and historical discussion relating to stability.

%The adjoint Brascamp--Lieb inequalities of \eqref{BT}, of which \eqref{adjlw} is an example, is one of several ``dual" forms of the general Brascamp--Lieb inequalities, beginning with the reverse Brascamp--Lieb inequalities of Barthe \cite{Barthe}. The Pr\'ekopa--Leindler and Borell--Brascamp--Lieb inequalities mentioned above are examples

The remainder of this introduction is organised as follows. In Section \ref{intf} we introduce Finner's inequality and state a stability result for  its general functional form. In Section \ref{intadjf} we introduce the adjoint Finner inequality and state our main result: Theorem \ref{thm:adfstable2}. Finally, in Section \ref{intk} we describe our applications to the theory of the $k$-plane transform, our main conclusion being Theorem \ref{thm:main}.

\subsection{Stability for Finner's inequality}\label{intf} In \cite{finner}, Finner established a broad generalisation of the Loomis--Whitney inequality \eqref{lw} to the setting of arbitrary product measure spaces.

Let $S_1,\hdots,S_m\subseteq \{1,\hdots,d\}$ and $1\le q_1,\hdots,q_m<\infty$ be such that
\begin{equation}
  \label{finnerrelation}  
\sum_{j=1}^m\frac{1}{q_j}1_{S_j}(i)
=1
\end{equation}
for each $1\leq i\leq d$, and let $(\Omega_i,\mu_i)$ be measure spaces for $i=1,\hdots,d$.
For a subset $S\subseteq \{1,\hdots,d\}$, define the product measure space $(\Omega_{S},\mu_{S})$  by
\[
\Omega_{S}=\prod_{i\in S}\Omega_i, \quad \mu_S=\bigotimes_{i\in S}\mu_i.
\]
We denote by $(\Omega,\mu)$ the full product measure space corresponding to $S=\{1,\hdots,d\}$, and write $x=(x_1,\hdots,x_d)\in \Omega_1\times\cdots\times\Omega_d=\Omega$.
Defining $P_{j}:\Omega\rightarrow \Omega_{S_j}$ to be the coordinate
%\footnote{Changed orthogonal to coordinate; I see} 
projection onto $\Omega_{S_j}$, Finner showed that
\begin{equation}\label{f}
\int_{\Omega}\prod_{j=1}^mg_j(P_{j}x)\mathrm{d}\mu(x)\le \prod_{j=1}^m\|g_j\|_{L^{q_j}(\Omega_{S_j})}
\end{equation}
for all (nonnegative) functions $g_j\in L^{q_j}(\Omega_{S_j})$, where $$\|g_j\|_{L^{q_j}(\Omega_{S_j})}^{q_j}:=\int_{\Omega_{S_j}}|g_j|^{q_j}\mathrm d\mu_{S_j}.$$ In the special case where $\Omega_j=\mathbb R$, the measure $\mu_j$ is Lebesgue measure, $m=d$, and $S_j=\{1,\hdots,d\}\setminus \{j\}$ for all $j$, the equations \eqref{finnerrelation} have the unique solution $q_1=\cdots=q_d=d-1$, and \eqref{f} becomes the Euclidean Loomis--Whitney inequality \eqref{lw}. If $S_j=\{1,\hdots,d\}$ for all $j$, then \eqref{f} is simply the $m$-linear H\"older inequality; we note that there is no loss of generality in supposing that $d=1$ in this case. A fully degenerate situation is when the sets $S_1,\hdots,S_m$ partition $\{1,\hdots,d\}$, forcing $q_1=\cdots=q_m=1$, so that \eqref{f} is an instance of Fubini's identity. We remark that if an exponent $q_j=1$, there is no loss of generality in supposing that the corresponding $S_j$ is a singleton; one may simply aggregate the $\Omega_i$ for $i\in S_j$.

The sets $S_1,\hdots,S_m$ cover $\{1,\hdots,d\}$ by \eqref{finnerrelation}, and so the sets $$M_i:=\{j:i\in S_j\}$$ are nonempty for each $1\leq i\leq d$.
Under the assumption that
\begin{equation}
    \label{assumptionM}
M_{i_1}\neq M_{i_2}
\end{equation}
for $i_1\neq i_2$, which evidently includes the Loomis--Whitney case, Finner \cite{finner} verified that equality occurs in \eqref{f} if and only if there exist constants $c_j>0$ and nonnegative functions $h_i:\Omega_i\rightarrow \mathbb R_+$ such that
\[
g_j(P_jx)^{q_j}=c_j\prod_{i\in S_j}h_i(x_i)
\]
almost everywhere.
We note that by aggregating $\Omega_{i_1}$ and $\Omega_{i_2}$ if $M_{i_1}=M_{i_2}$, we can always assume that \eqref{assumptionM} holds.
%The assumption \eqref{assumptionM} is necessary, in the sense that without \eqref{assumptionM}, one cannot guarantee that extremisers are given by products of functions on $\Omega_i$. 
Under the assumption \eqref{assumptionM}, we begin by proving the following stability statement for the functional inequality \eqref{f}. As we clarify shortly, this sharpens a similar result of Cook and Dembo \cite{CD}, matching the stability exponent of Ellis, Friedgut, Kindler and Yehudayoff \cite{EFKY} in the case of characteristic functions.
\begin{theorem}[Stability for Finner]\label{thm:stablefinner}
    Let $d,m\ge1$, $1\le q_1,\cdots,q_m<\infty $ and $S_1,\hdots, S_m\subseteq \{1,\hdots,d\}$ satisfy \eqref{finnerrelation} and \eqref{assumptionM}. There exists a constant $C>0$ such that if nonnegative functions $g_j:\Omega_{S_j}\rightarrow \mathbb R_+$ satisfy 
    \begin{equation}
        \label{finnerstablecond}
    \int_{\Omega}\prod_{j=1}^mg_j(P_{j}x)\mathrm{d}\mu(x)\ge (1-\delta)\prod_{j=1}^m\|g_j\|_{L^{q_j}(\Omega_{S_j})},
    \end{equation}
    for $0\le\delta<1$,
    then there exist constants $c_j>0$ for $j=1,\hdots, m$ and nonnegative functions $h_i:\Omega_i\rightarrow \mathbb R_+$ for $i=1,\hdots,d$ such that
    \begin{equation}
        \label{finnerstablest}
    \|g_j^{\frac{q_j}{2}}-H_j^{\frac 12}\|_{L^2(\Omega_{S_j})}\le C \delta^{\frac 12} \|g_j\|_{L^{q_j}(\Omega_{S_j})}^{\frac{q_j}{2}}
    \end{equation}
    for all $j=1,\hdots, m$, where $H_j(P_jx)=c_j\prod_{i\in S_j}h_{i}(x_i)$. Furthermore, one may take each $h_i$ as
    \begin{equation}
        \label{hifinner}
    h_i(x_i)=\int g_j(P_jx)^{q_j}\mathrm{d}\mu_{S_j\setminus \{i\}}
    \end{equation}
    for some $j=j(i)\in M_i$; in cases that $S_j= \{i\}$, the formula \eqref{hifinner} should be interpreted as
   $
    h_i(x_i)=g_j(P_jx)^{q_j}.
   $
   \end{theorem}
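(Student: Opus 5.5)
Normalise so that $\|g_j\|_{L^{q_j}}=1$ and set $F_j:=g_j^{q_j}$, a probability density on $\Omega_{S_j}$. The proof goes by induction on $d$, following Finner's proof of \eqref{f}, and keeps track of the H\"older deficit at each step through a quantitative two-function (or $k$-function) H\"older stability estimate.

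\textbf{Quantitative H\"older.} The basic estimate is the following: if $\sum_j 1/r_j=1$ and $\int \prod_j F_j^{1/r_j}\ge 1-\delta$ for probability densities $F_j$, then $\|F_j^{1/2}-F_k^{1/2}\|_{L^2}^2\lesssim \delta$ for all $j,k$. This follows from Hellinger-type bounds. Write $\prod_j F_j^{1/r_j}\le$ a geometric mean, and compare with the pairwise Bhattacharyya coefficient via the strict concavity of weighted geometric means. Equivalently, $1-\int\prod F_j^{1/r_j}\gtrsim \sum_{j<k}\|F_j^{1/2}-F_k^{1/2}\|_2^2$. This can be proved pointwise by a Taylor expansion of $\sum_j t_j/r_j-\prod_j t_j^{1/r_j}$ in the variables $t_j^{1/2}$.

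\textbf{Induction on the coordinates.} Single out coordinate $d$ and integrate in $x_d$ first. For $j\in M_d$, apply H\"older in $x_d$ with exponents $q_j$ if $\sum_{j\in M_d}1/q_j=1$. This produces reduced functions $\tilde g_j(P_jx\setminus x_d)=\|g_j(\cdot,x_d)\|_{L^{q_j}(\mu_d)}$ on $\Omega_{S_j\setminus\{d\}}$, which satisfy a Finner configuration on $d-1$ coordinates. The total deficit $\delta$ splits into two pieces:
\begin{itemize}
\item the outer deficit $\delta'$ of the reduced $(d-1)$-dimensional Finner inequality;
\item an averaged inner H\"older deficit, $\int W(y)\,\epsilon(y)\,dy\le\delta$, where $\epsilon(y)$ is the H\"older deficit in $x_d$ at the fixed remaining variables $y$ and $W$ is the corresponding weight.
\end{itemize}
By the quantitative H\"older estimate applied fibrewise, the normalised fibres $x_d\mapsto F_j(y,x_d)/\tilde F_j(y)$, $j\in M_d$, are pairwise $L^2$-Hellinger-close on average. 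By induction, $\tilde F_j$ is $\delta^{1/2}$-close to a product $\prod_{i\in S_j\setminus\{d\}}h_i$.

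Finner's equality proof needs the following step: each fibre depends only on variables common to all $S_j$ with $j\in M_d$, and together with \eqref{assumptionM} this forces the fibre density to depend on $x_d$ alone. Quantitatively, the fibre densities for $j,k\in M_d$ depend on different sets of variables, and closeness between them forces each to be close to a function of $x_d$ only. Concretely, average over the variables not shared. Since $\|a-b\|_2$ small with $a$ independent of some variables, the quantity $\|b-\mathbb E b\|$ is small in the $L^2$ sense, using Minkowski. This gives the factor $h_d(x_d)$, and one checks it can be taken to be the marginal \eqref{hifinner}. Replacing the $h_i$ by the actual marginals costs only constants, because the marginal of the square of a function close to $H_j^{1/2}$ is close to the corresponding product marginal (contractivity of marginals in the Hellinger distance).

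\textbf{General case and the main obstacle.} When some $q_j=1$ or the exponents in $M_d$ do not sum to $1$, the inner step is a mixed $L^{q}$ H\"older/Minkowski step as in Finner's argument; the same Hellinger deficit bound still holds for it. The main obstacle is the weighted-average step: converting $\int W\epsilon\le\delta$ and the Hellinger closeness of the reduced functions into unweighted $L^2$ closeness of $F_j^{1/2}$ to $H_j^{1/2}$, with the optimal exponent $\delta^{1/2}$ rather than a worse power. I would try to handle this by working throughout with square roots: the identity $\int(F^{1/2}-G^{1/2})^2=2-2\int(FG)^{1/2}$ makes all the error terms additive in the Hellinger distance, with the triangle inequality in $L^2$ at each of the finitely many (at most $d$) induction steps. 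This yields a constant $C$ depending only on $d,m,(q_j),(S_j)$.
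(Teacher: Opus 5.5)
Your architecture is sound: collapse $x_d$, apply the induction hypothesis to the reduced $\tilde g_j$, use fibrewise H\"older stability, then run Finner's intersection argument $\bigcap_{j\in M_d}S_j=\{d\}$. The gap is the step you yourself flag as the main obstacle, which is not carried out, and the remedy you propose does not address it.

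What the fibrewise H\"older step actually gives is
\[
\int_{\Omega_{I_{d-1}}} W(y)\,E(y)\,\mathrm{d}\mu_{I_{d-1}}(y)\lesssim\delta,\qquad E(y)=\sum_{j,k\in M_d}\|\psi_j(y,\cdot)-\psi_k(y,\cdot)\|_{L^2(\mu_d)}^2,\quad W=\prod_{k=1}^m\tilde g_k\circ\tilde P_k,
\]
where $\psi_j=(g_j^{q_j}/\tilde g_j^{q_j})^{1/2}$ are the normalised fibres. The weight $W$ causes two problems.

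First, your averaging over unshared variables needs a product reference measure. Only then do the conditional expectations $\Pi_T$ onto functions of $(y_T,x_d)$ satisfy $\Pi_S\Pi_T=\Pi_{S\cap T}$, which is what makes a function of $y_{\tilde S_j}$ project to a function of $y_{\tilde S_j\cap\tilde S_k}$. But $W$ is only approximately a product.

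Second, the target $\|g_j^{q_j/2}-H_j^{1/2}\|_2^2$ carries the weight $\tilde g_j^{q_j}$ (or $\tilde H_j$), which is not pointwise comparable to $W$. The Hellinger identity and the triangle inequality only control how errors accumulate across induction steps; they do nothing about a change of weight. Swapping $W$ for its product model $V=\prod_k\tilde H_k^{1/q_k}$ at the cost of $\|W-V\|_{L^1}$ loses a power. That norm is in general only $O(\delta^{1/2})$, and even this needs a two-sided product estimate of the type \eqref{Fstable2}. You would end with $\delta^{1/4}$, not $\delta^{1/2}$.

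The missing idea is that only a one-sided comparison is needed, because $E$ is bounded: each $\psi_j(y,\cdot)$ has unit norm.
\begin{itemize}
\item Normalise $c_k=1$ and $\|h_i\|_1=1$, so that $V=\prod_{i<d}h_i$ is a product probability density.
\item If $V(y)>2W(y)$, then for some $k$ one has $\tilde g_k^{q_k/2}<\kappa\,\tilde H_k^{1/2}$ at $\tilde P_ky$, where $\kappa=2^{-1/\sigma}$ and $\sigma=\sum_k 2/q_k$. Call this set $B_k$.
\item On $B_k$ one has $\int_{B_k}\tilde H_k\lesssim\|\tilde g_k^{q_k/2}-\tilde H_k^{1/2}\|_2^2\lesssim\delta$. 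Since $V$ is an exact product, Fubini gives $\int_{\tilde P_k^{-1}(B_k)}V=\int_{B_k}\tilde H_k$.
\item Hence $\int VE\le 2\int WE+\|E\|_\infty\,O(\delta)\lesssim\delta$.
\end{itemize}
Your conditional-expectation argument now works in $L^2(V\otimes\mu_d)$. It yields a single $\bar\psi=\bar\psi(x_d)$ with $\|\psi_j-\bar\psi\|\lesssim\delta^{1/2}$ for all $j\in M_d$. Since $\psi_j$ depends only on $(y_{\tilde S_j},x_d)$, Fubini converts the $V$-weight into the $\tilde H_j$-weight. Writing $g_j^{q_j/2}=\tilde g_j^{q_j/2}\psi_j$ then gives \eqref{finnerstablest} with $h_d=\bar\psi^2$.

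Completed this way, your route treats every $j$ by one mechanism, including those with $S_j=\{1,\dots,d\}$. The paper proceeds differently:
\begin{itemize}
\item When $M_0=\emptyset$ it never looks at fibres. For each $j$ it collapses a coordinate $i\notin S_j$, so $g_j$ is untouched and the induction hypothesis applies to it directly.
\item When $M_0\neq\emptyset$ it splits off the full-dimensional functions by H\"older. It then proves the two-sided product estimate \eqref{Fstable2}, using Finner's inequality where $g_\ell^{q_\ell}\gg H_\ell$ and the pointwise bound \eqref{ineq:AMGMineq} elsewhere.
\end{itemize}
Incidentally, under \eqref{finnerrelation} the exponents indexed by $M_d$ always sum to $1$. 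Under \eqref{assumptionM}, $q_j=1$ forces $S_j$ to be a singleton. So no mixed H\"older/Minkowski step ever arises.
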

\begin{remark}[Relation with \cite{CD}]\label{Rem:CD}
In \cite{CD}, Cook and Dembo established a form of stability for Finner's inequality. Their result can be interpreted as showing that \eqref{finnerstablecond} implies  
   \begin{equation}
\nonumber
    \|g_j^{{q_j}}-H_j\|_{L^1(\Omega_{S_j})}\leq C\delta^{c} \|g_j\|_{L^{q_j}(\Omega_{S_j})}^{{q_j}},
    \end{equation}
    for some $c>0$. The identification of the optimal stability exponent $c$ is left as a question in \cite{CD}, and Theorem \ref{thm:stablefinner} provides an answer to this (the optimal exponent is $c=1/2$ in all interesting cases; see Section \ref{sec:sharpness}), although one might also be interested in the optimal value of the multiplicative constant $C$ -- something that we do not consider here.
%    \footnote{JB: I suspect a bit of work is needed here if we want to claim this.  I think $1/2$ isn't always optimal, but it is if you want a single exponent that works for all Finner data. Fubini illustrates this, no? Maybe the optimal constant is $1/2$ in all other cases! Maybe the argument of Proposition 3.4 in \cite{BC:stable} can do this. Maybe you see this for some other reason...;}
    \end{remark}

\begin{remark}[Relation with \cite{EFKY}]\label{Rem:EFKY}
 Theorem \ref{thm:stablefinner} recovers (up to the identification of an explicit multiplicative constant $C$) the stability result for the measure-theoretic form of Finner's inequality,
    \[
    \mu(A)\le \prod_{j=1}^m\mu_{S_j}(P_j(A))^{\frac 1{q_j}},
    \]
    established in \cite{EFKY}.
 Theorem \ref{thm:stablefinner} shows that  for $\delta>0$, the condition
    \begin{equation}
        \label{geofinnercond}
    \mu(A)\ge (1-\delta)\prod_{j=1}^m\mu_{S_j}(P_j(A))^{\frac 1{q_j}}
    \end{equation}
    implies 
    \begin{equation}\label{mtb}
    \mu(A\triangle \mathcal P)\le C\delta \mu(A)
    \end{equation}
    for some
     cartesian product set $\mathcal P$; here $A\triangle \mathcal P=(A\setminus \mathcal P)\cup (\mathcal P\setminus A)$ is the symmetric difference of $\mathcal{P}$ and $A$; see Section \ref{Sect:stabmeas}.
It may be of interest to note that \eqref{mtb} does not follow from the weaker $L^1$-based functional stability statement discussed in Remark \ref{Rem:CD}, since the optimal exponent there is $c=1/2$, rather than $1$; see Section \ref{sec:sharpness}.
\end{remark}

\subsection{Stability for the adjoint Finner inequality}\label{intadjf}
With the notation of the previous subsection, let $(P_j)_*f:\Omega_{S_j}\rightarrow \mathbb{R}_+$ denote the pushforward of a function $f:\Omega\rightarrow\mathbb{R}_+$ by $P_j$, defined by duality by
\begin{equation}\label{pbpf}
\int_{\Omega_{S_j}} (P_j)_*f(y)g(y)\mathrm{d}\mu_{S_j}(y)=\int_{\Omega} f(x)g(P_jx)\mathrm{d} \mu(x),\end{equation}
and explicitly by the marginal formula
$$
(P_j)_*f(y)=\int_{P_j^{-1}(y)}f\;\mathrm{d}\mu_{S_j^c};
$$
the latter follows from Fubini's theorem, and we clarify that $S_j^c$ is the complement of $S_j$ in $\{1,\hdots,d\}$.
The adjoint of Finner's inequality \eqref{f}, as defined in \cite{BT}, takes the form 
\begin{equation}\label{af}
\|f\|_{L^p(\Omega)}\le \prod_{j=1}^m\|(P_j)_*f\|_{L^{p_j}(\Omega_{S_j})}^{\theta_j},
\end{equation}
where  $\theta_1,\hdots,\theta_m>0$ satisfy the homogeneity condition \begin{equation}\label{sum1}\theta_1+\cdots+\theta_m=1,\end{equation} and the exponents $0<p,p_1,\hdots,p_m\leq 1$ satisfy the compatibility condition
\begin{equation}
    \label{relation:thetajpj}
    \frac{1}{q_j}\left(1-\frac 1p\right)=\theta_j\left(1-\frac 1{p_j}\right)
\end{equation}
for each $1\leq j\leq m$, where $q_1,\hdots,q_m$ satisfy \eqref{finnerrelation}. Evidently the particular adjoint Loomis--Whitney inequality \eqref{adjlw} stated earlier corresponds to the case where $m=d$, $\Omega_j=\mathbb R$, the measure $\mu_j$ is Lebesgue measure, $S_j=\{1,\hdots,d\}\setminus \{j\}$,  $q_j=d-1$, $\theta_j=1/d$, and the common exponents $p_j=q$ are given by \eqref{relation:thetajpj}. 

If $p=1$ then necessarily $p_j=1$ for all $j$ by \eqref{relation:thetajpj}, and \eqref{af} becomes an identity by Fubini's theorem. However, if $p<1$ and the nondegeneracy condition \eqref{assumptionM} is satisfied, a straightforward adaptation of the proof of \cite[Theorem 2.1]{BT}, which characterises the extremisers in the Loomis--Whitney case, shows that the extremisers of the adjoint Finner inequality \eqref{af} are precisely the functions of the form
\[
f=c1_{E_1\times\cdots\times E_d},
\]
where $E_i\subseteq\Omega_i$ has positive measure for each $i$, and $c$ is a positive constant.

Our main theorem is the following associated stability statement.
\begin{theorem}[Stability for adjoint Finner]\label{thm:adfstable2}
   Suppose $0<p<1$ and that the sets $S_j$ and exponents $q_j, p_j, \theta_j$ satisfy \eqref{finnerrelation}, \eqref{assumptionM}, \eqref{sum1} and \eqref{relation:thetajpj}. Then, for all $\varepsilon>0$ there exists $\delta>0$ such that if 
    \begin{equation}
        \label{ineq:stableadfcond}
    \|f\|_{L^p(\Omega)}\ge (1-\delta) \prod_{j=1}^m\|(P_j)_*f\|_{L^{p_j}(\Omega_{S_j})}^{\theta_j},
    \end{equation}
    then there exist a constant $c>0$ and sets $E_i\subseteq \Omega_i$ of positive measure such that
    \begin{equation}\label{notgoodenough}
    \|f-c1_{E_1\times\cdots \times E_d}\|_{L^p(\Omega)}\le  \varepsilon\|f\|_{L^p(\Omega)}
\end{equation} 
and
    \begin{equation}
        \label{ineq:pjfpj1}
    \|(P_j)_*(f-c1_{E_1\times\cdots \times E_d})\|_{L^{p
    _j}(\Omega_{S_j})}\le \varepsilon \|(P_j)_*f\|_{L^{p_j}({\Omega_{S_j}})}
    \end{equation}
    for each $1\leq j\leq m$.
\end{theorem}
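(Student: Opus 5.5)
The plan is to make the proof of \eqref{af} from \cite{BT} quantitative: write it as a chain of three inequalities, show that \eqref{ineq:stableadfcond} forces each link to be nearly an equality, and then combine the resulting structural information. Throughout, set $F_j:=(P_j)_*f$ and normalise so that $\|f\|_{L^p(\Omega)}=1$.

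\textbf{Step 1: the chain.} I would choose exponents $\alpha_j,\beta_j>0$, dictated by \eqref{relation:thetajpj} and \eqref{sum1}, and set $G(x)=\prod_j F_j(P_jx)^{\alpha_j}$. The first link is H\"older with exponents $1/p$ and $1/(1-p)$:
\[
\int f^p=\int \bigl(fG^{-\frac{1-p}{p}}\bigr)^{p}G^{1-p}\le \Bigl(\int fG^{-\frac{1-p}{p}}\Bigr)^{p}\Bigl(\int G\Bigr)^{1-p}.
\]
The second link applies Finner's inequality \eqref{f} to $\int G$, giving $\int G\le \prod_j\|F_j^{\alpha_j}\|_{L^{q_j}(\Omega_{S_j})}$. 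The third link applies H\"older with weights $\theta_j$ with respect to the measure $f\,\mathrm d\mu$, together with the defining property \eqref{pbpf} of $(P_j)_*$. Writing $G^{-\frac{1-p}{p}}=\prod_j F_j(P_jx)^{-\beta_j}$, this gives
\[
\int f\prod_jF_j(P_jx)^{-\beta_j}\le \prod_j\Bigl(\int_{\Omega_{S_j}}F_j^{1-\beta_j/\theta_j}\Bigr)^{\theta_j}.
\]
The exponents are chosen so that $\alpha_jq_j=p_j=1-\beta_j/\theta_j$. The chain then collapses to \eqref{af}, and under \eqref{ineq:stableadfcond} each link is an equality up to a factor $1-O(\delta)$.

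\textbf{Step 2: stability of each link.} For the Finner link, Theorem \ref{thm:stablefinner} yields $\|F_j^{p_j/2}-H_j^{1/2}\|_{L^2}\lesssim\delta^{1/2}$, with $H_j=c_j\prod_{i\in S_j}h_i$ and $h_i$ given by \eqref{hifinner}. Consequently $G$ is $L^1$-close to $\prod_j H_j^{1/q_j}(P_jx)=c\prod_{i=1}^d h_i(x_i)$, using \eqref{finnerrelation}. For the first H\"older link, I would use a standard quantitative H\"older stability (a uniform-convexity estimate for $t\mapsto t^p$). This forces $f\approx\lambda G^{1/p}$ in $L^p$, and also forces the mass of $G$ off $\operatorname{supp}f$ to be $O(\delta^{c})$. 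For the $\theta$-H\"older link, near-equality forces the functions $F_j^{-\beta_j/\theta_j}(P_jx)$ to be nearly proportional to one another on $\operatorname{supp} f$, in $L^1(f\,\mathrm d\mu)$.

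\textbf{Step 3: combining, and the main obstacle.} Steps 1--2 give $f\approx \lambda\prod_i h_i(x_i)^{1/p}\,1_{\operatorname{supp}f}$, while $G\approx c\prod_ih_i$ carries almost no mass off $\operatorname{supp} f$. Hence $\operatorname{supp}f$ nearly contains $\{\prod h_i>0\}$ in the weighted sense. Computing the marginals of $\prod_i h_i^{1/p}$ and comparing them with $F_j^{p_j}\approx H_j$ gives consistency relations of the form ``$\int h_i^{1/p}$ against $h_i^{p_j\cdot(\ldots)}$''. Because $p<1$ and $p_j<1$, these relations are strict inequalities (Jensen/H\"older) unless each $h_i$ is nearly constant on its support, i.e.\ nearly $c_i1_{E_i}$. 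I expect this to be the hardest step: making the rigidity ``product with the right marginals $\Rightarrow$ indicator'' quantitative, for arbitrary measure spaces and without any compactness. My first attempt is to pick level sets $E_i=\{h_i>\tau_i\}$ with $\tau_i$ chosen by a pigeonhole/averaging argument. I would then bound the Jensen deficit via the strict concavity of $t^{p}$ on a range where $h_i$ is comparable to its typical value, and deal with the tails by Chebyshev. This only yields a non-explicit $\varepsilon(\delta)$, which matches the qualitative form of the statement.

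\textbf{Step 4: conclusions.} Once $f\approx c1_{E_1\times\cdots\times E_d}$ in the $L^1$ and weighted senses above, I would derive \eqref{notgoodenough} using the quasi-triangle inequality for $L^p$ and the finiteness of $\mu(E_1\times\cdots\times E_d)$, the latter controlled by $\|f\|_{L^p(\Omega)}=1$. For \eqref{ineq:pjfpj1} I would not push forward the error directly, since $(P_j)_*$ is unbounded on $L^p$ for $p<1$. Instead I would use $F_j^{p_j}\approx H_j$ in $L^1$, the identity $(P_j)_*(c1_{E})=c'1_{P_j(E)}$, and the elementary bound $|a-b|^{p_j}\le|a^{p_j}-b^{p_j}|^{\kappa}$-type estimates on $\operatorname{supp}H_j$. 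The mass of $F_j$ outside $P_j(E)$ would be controlled by the mass bound obtained in Step 2.
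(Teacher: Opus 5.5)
Your Steps 1 and 2 reproduce the paper's set-up. Your chain is exactly the one in the proof of Theorem \ref{thm:adFstable}: your $G$ is $\prod_j g_j\circ P_j$ with $g_j=((P_j)_*f)^{p_j/q_j}$. Your three stability inputs are \eqref{ineq:Fstable1}, \eqref{ineq:fgjstableFinner} and \eqref{ineq:gjgellFinner}, and your $f\approx\lambda\prod_i h_i^{1/p}$ is \eqref{ineq:fHfinner}.

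The gap is Step 3, which is where the real content lies. ``Computing the marginals of $\prod_i h_i^{1/p}$ and comparing them with $F_j^{p_j}\approx H_j$'' tacitly uses $F_j=(P_j)_*f\approx(P_j)_*(\lambda\prod_i h_i^{1/p}1_{\supp f})$. But Steps 1--2 only give closeness of $f$ to this product in the Hellinger/$L^p$ sense (i.e.\ $\|f^{p/2}-H^{1/2}\|_2$ is small), and $(P_j)_*$ is not continuous in that topology when $p<1$. This is exactly the instability in the remark following the statement: an $L^p$-small perturbation can change the marginals by an arbitrary amount. Similarly, ``$\supp f$ nearly contains $\{\prod_i h_i>0\}$'' is only known with respect to the weight $\prod_i h_i$. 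The fibre integrals of $\prod_i h_i^{1/p}$ instead use the weight $h_i^{1/p}$, which may concentrate precisely on the part that is negligible for $\prod_i h_i$. So the two-sided consistency relations to which you want to apply Jensen are not available. One only has one-sided bounds such as $F_j\ge(P_j)_*(f1_A)$ on a good set $A$, and the level-set/pigeonhole/Chebyshev sketch does not supply the other side. In effect you are trying to stabilise the second step of the extremiser argument of \cite{BT} (tensor product $\Rightarrow$ indicator, via log-convexity of $L^p$ norms), which the paper singles out as the delicate step.

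The missing idea is to use your third link, which you record in Step 2 but never use.
\begin{itemize}
\item By Lemma \ref{lem:f-g}, \eqref{ineq:gjgellFinner} becomes the pointwise relation $F_j^{p_j-1}(P_jx)=(1+O(\varepsilon))\,b_{j,\ell}\,F_\ell^{p_\ell-1}(P_\ell x)$ on a set $A$ carrying almost all of $\|f\|_p^p$.
\item Likewise \eqref{ineq:Fstable1} becomes $F_j^{p_j}=(1+O(\varepsilon))\,c_j\prod_{i\in S_j}h_i$ on a large set.
\item Combining the two gives \eqref{ineq:hl=hjfinner} on a set $\tilde A$ with $\|H^{1/2}1_{\tilde A^c}\|_2\lesssim\varepsilon\|H^{1/2}\|_2$. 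That is, $\prod_{i\in S_j}h_i$ and $\prod_{i\in S_\ell}h_i$ agree up to a constant and a factor $1+O(\varepsilon)$ (after fixed powers when the $p_j$ differ).
\item Condition \eqref{assumptionM} provides $j_1,j_2$ with $d\in S_{j_1}\setminus S_{j_2}$. Hence on $\tilde A$ the factor $h_d(x_d)$ is, up to $1+O(\varepsilon)$, a function of $(x_1,\dots,x_{d-1})$ alone.
\item Freezing a typical $v=(x_1,\dots,x_{d-1})$ then gives $\|h_d^{1/2}-c_d1_{E_d}\|_2^2\le\varepsilon\|h_d\|_1$, with $E_d$ the slice of $\tilde A$ over $v$ (Lemma \ref{lem:prodchar1}).
\end{itemize}
No marginal of the approximant is ever computed, and this is how the instability is avoided.

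Step 4 has a second, smaller gap. The above yields $(P_j)_*f\approx\tilde c_j(P_j)_*(1_{E_1\times\cdots\times E_d})$ with constants $\tilde c_j$ depending on $j$. But \eqref{ineq:pjfpj1} requires the single constant $c$ of \eqref{notgoodenough}, and the matching \eqref{ineq:c/tildecj} needs its own argument. In the paper this goes as follows. Near-extremality and $\sum_j\theta_j=1$ give $c\approx\prod_j\tilde c_j^{\theta_j}$. If some $\tilde c_{j_*}$ is too small, one applies \eqref{af} to the nonnegative function $f-\tilde c_{j_*}1_A$, where $A\subseteq E_1\times\cdots\times E_d$ is a set on which $f\ge\tilde c$.
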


Together, the conclusions \eqref{notgoodenough} and \eqref{ineq:pjfpj1} provide a complete (qualitative) description of near extremisers for the adjoint Finner inequality \eqref{af}. To be quantitative one would want to clarify the relationship between $\varepsilon$ and $\delta$ as we do in the statement of Theorem \ref{thm:stablefinner}. An inspection of our proof of Theorem \ref{thm:adfstable2} reveals that one may take $\delta$ to be a certain power of $\varepsilon$, although this power seems unlikely to be optimal; it may be interesting to identify the best-possible exponent in terms of the underlying data.

\begin{remark}
We note that the estimate \eqref{notgoodenough} alone does not guarantee that $f$ is a near extremiser due to a certain instability of the marginal operators $(P_j)_*$ on $L^p$ for $p<1$; for example, in the case where $\Omega_i=\mathbb{R}$ and $\mu_i$ is Lebesgue measure for each $i$, if
    \[
g(x)=\prod_{i=1}^d|x_i|^{-\frac 1{p+}}1_{[-1,1]}(x_i)
    \]
    for some $p<p_+<1$, then $\|f+\varepsilon g\|_{L^p(\mathbb{R}^d)}$ is finite, yet
%the expression $\prod_{j=1}^m\|(P_j)_*f\|_{L^{p_j}(\Omega_{S_j})}^{\theta_j}$ is finite, whereas
    \[
    \prod_{j=1}^m\|(P_j)_*(f+\varepsilon g)\|_{L^{p_j}(\Omega_{S_j})}^{\theta_j}
    \]
    is infinite for any $f\in L^p(\mathbb{R}^d)$ satisfying \eqref{ineq:stableadfcond}, for all $\varepsilon>0$; here we are excluding the degenerate cases where either $p=1$ or $S_1=\cdots=S_m=\{1,\hdots,d\}$ (H\"older's inequality). This makes \eqref{ineq:stableadfcond} unstable under perturbations with respect to the $L^p$ (quasi-) metric.  The additional inequalities \eqref{ineq:pjfpj1} account for this. It is worth noting that \eqref{notgoodenough} is not a direct consequence of combining \eqref{ineq:pjfpj1} with the adjoint Finner inequality \eqref{af}. This is because $f-c1_{E_1\times\cdots \times E_d}$ is not in general single-signed, as required by adjoint Brascamp--Lieb inequalities; see \cite{BT}. 
\end{remark}   
\begin{remark}[An entropic interpretation]\label{renyisub}
	As observed in \cite[Remark 1.13]{BT}, the inequality \eqref{af} may be written as 
	\begin{equation}\label{rensub}
	\mbox{h}_p(f)\leq\sum_{j=1}^m q_j^{-1}\mbox{h}_{p_j}((P_j)_*f)
	\end{equation}
	where $\mbox{h}_p(f):=\frac{p}{1-p}\log\|f\|_p$ is the \textit{R\'enyi entropy} of $f$. Noting that $$\mbox{h}_p(f)\rightarrow \mbox{h}(f)=-\int f\log f$$ as $p\rightarrow 1$, this may be viewed as an extension of Shearer's inequality for the Shannon entropy to the full R\'enyi scale; we note that Shearer's inequality sits within the broader entropic Brascamp--Lieb framework of Carlen and Cordero-Erausquin \cite{CC}, and we refer to \cite[Remark 1.13]{BT} for the associated generalisation of \eqref{rensub}. As will be expected, Theorem \ref{thm:adfstable2} may be interpreted as a stability result for \eqref{rensub}: For all $N\ge1$, there exists $\delta>0$ such that if $$\mbox{h}_p(f)\ge-\delta+ \sum_{j=1}^m q_j^{-1}\mbox{h}_{p_j}((P_j)_*f),$$ then there exist $E_i\subseteq\Omega_i$ of positive measure for each $i$, and $c>0$ such that
%    \footnote{added; checked}
$$\mbox{h}_p(f-c1_{E_1\times\cdots\times E_d})\leq\mbox{h}_p(f)-N$$
    and
    $$\mbox{h}_{p_j}((P_j)_*(f-c1_{E_1\times\cdots\times E_d}))\le \mbox{h}_{p_j}((P_j)_*f) -N$$ for each $1\leq j\leq m$. 
\end{remark}
\begin{remark}
    The adjoint Finner inequality \eqref{af}, or rather its generalisation in \cite{BT}, is one of several ``dual" forms of the Brascamp--Lieb inequality that have emerged over the years, beginning with the reverse Brascamp--Lieb inequality of Barthe \cite{Barthe} -- a setting that has also seen progress on stability; see \cite{FHT}.
    We point to \cite{CC,BW,CDP,CHV,Liak, BCo} and the references there for some further examples of dual forms.
    %Further examples of ``dual" forms include the entropic Brascamp--Lieb inequality of Carlen and Cordero-Erausquin \cite{CC} (see Remark \ref{renyisub}), the inverse Brascamp--Lieb inequality of Barthe and Wolff \cite{BW} and Chen, Dafnis, and Paouris \cite{CDP}, and the Fourier Brascamp--Lieb inequality developed by Cowling and the first author in \cite{BCo}, the multilinear duality statements of  \cite{Guth} and Carbery, Hanninen and Valdimarsson \cite{CHV}, the Fourier Brascamp--Lieb inequality developed by Cowling and the first author \cite{BCo}
\end{remark}

\begin{comment}
It is perhaps plausible that a full characterisation of near extremisers is available with respect to a stronger metric, such as $(f,g)\mapsto\sum_j\|(P_j)_*(f-g)\|_{p_j}$.
    
\end{comment}

We now turn to our applications of Theorem \ref{thm:adfstable2} to the $k$-plane transform.
\subsection{Lower bounds for the $k$-plane transform}\label{intk}
For integers $1\leq k<d$ the \textit{$k$-plane transform} $T_{k,d}$, acting on functions $f:\mathbb{R}^d\rightarrow \mathbb{C}$ is given by
$$
T_{k,d}f(\pi,y)=\int_\pi f(x+y)\mathrm{d}x.
$$
Here $\pi$ belongs to the Grassmannian manifold $\mathcal{G}_{k,d}$ of $k$-dimensional subspaces of $\mathbb{R}^d$, and $y$ belongs to $\pi^\perp$, the orthogonal complement of $\pi$ in $\mathbb{R}^d$. The measure $\mathrm{d}x$ is the $k$-dimensional Hausdorff measure on $\mathbb{R}^d$, allowing us to interpret $T_{k,d}f(\pi,y)$ as an integral of $f$ over the $k$-plane $\pi+\{y\}$. The Grassmannian manifold $\mathcal{M}_{k,d}$ of $k$-planes in $\mathbb{R}^d$ is naturally equipped with the rotation and translation-invariant measure 
$$
\int_{\mathcal{M}_{k,d}}F:=\int_{\mathcal{G}_{k,d}}\left(\int_{\pi^\perp}F(\pi,y)\mathrm{d}y\right)\mathrm{d}\pi,
$$
where $\mathrm{d}\pi$ is the unique $\mathrm{O}(d)$-invariant probability measure on $\mathcal{G}_{k,d}$, 
%(induced by the transitive action of $\mathrm{SO}(d)$ {\color{red} Or should this be $\mathrm{O}(d)$?}), 
so that $$\|T_{k,d}f\|_{L^1(\mathcal{M}_{k,d})}=\|f\|_{L^1(\mathbb{R}^d)}$$ by Fubini's theorem. The question of more general Lebesgue norm control of $T_{k,d}$ of the form \begin{equation}\label{upper}\|T_{k,d}f\|_{L^q(\mathcal{M}_{k,d})}\leq C\|f\|_{L^p(\mathbb{R}^d)}\end{equation}
has received considerable attention, both in terms of the existence of a finite $C$, established for general $k$ by Christ \cite{C:k}, and the identification of its optimal value, along with the nature of the extremising functions $f$; see \cite{C:k, Baern, Drouot, Taryn} and the references there for further context, including the origins of such estimates in the $k=d-1$ setting of the Radon transform. We note that elementary scaling considerations identify
\begin{equation}\label{scalingk}
    \frac{1}{d}\left(1-\frac{1}{q}\right)=\frac{1}{d-k}\left(1-\frac{1}{p}\right)
    \end{equation}
    as a necessary condition for \eqref{upper} to hold with some finite constant $C$.

It was shown recently in \cite[Theorem 8.7]{BT} that $T_{k,d}$ satisfies \textit{lower bounds} in all cases permitted by the necessary scaling condition \eqref{scalingk}, provided one looks in the \textit{quasi-Banach range} $0<p\leq 1$.
\begin{theorem}[\cite{BT}]\label{thm:BT}
    Suppose that $0<p,q\leq 1$ satisfy the scaling condition \eqref{scalingk}.
    Then \begin{equation}\label{ineqk}
\|T_{k,d}f\|_{L^q(\mathcal{M}_{k,d})}\geq \|f\|_{L^p(\mathbb{R}^d)}
    \end{equation}
    for all measurable functions $f:\mathbb{R}^d\rightarrow\mathbb{R}_+$.
\end{theorem}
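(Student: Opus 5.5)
One natural route deduces \eqref{ineqk} from the adjoint Loomis--Whitney/Brascamp--Lieb machinery by averaging over frames. Fix an orthonormal basis $\{v_1,\hdots,v_d\}$ of $\mathbb{R}^d$. For each $k$-element subset $J\subseteq\{1,\hdots,d\}$ write $\pi_J=\langle v_i:i\in J\rangle$. Then $T_{k,d}f(\pi_J,\cdot)$ is exactly the marginal $(P_{J})_*f$ onto $\pi_J^\perp=\Omega_{J^c}$, where $\Omega_i=\langle v_i\rangle$ carries Lebesgue measure.

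The first step is to apply the adjoint Finner inequality \eqref{af} with this data. We take $m=\binom{d}{k}$ sets $S_J=J^c$, each of size $d-k$. Every coordinate lies in $\binom{d-1}{k}$ of them, so \eqref{finnerrelation} holds with $q_J=\binom{d-1}{k}$. We also take $\theta_J=1/\binom dk$. Then \eqref{relation:thetajpj} reads
\[
\binom{d-1}{k}^{-1}\Big(1-\tfrac1p\Big)=\binom dk^{-1}\Big(1-\tfrac1{p_J}\Big),
\]
which gives $1-\frac1{p_J}=\frac{d}{d-k}(1-\frac1p)$. This is precisely \eqref{scalingk} with $p_J=q$. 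Hence, for every orthonormal frame $V$,
\[
\|f\|_p\le\prod_{J}\|T_{k,d}f(\pi_J,\cdot)\|_{L^q(\pi_J^\perp)}^{1/\binom dk}.
\]

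The second step is to pass from this geometric mean to the averaged $L^q$ norm. By AM--GM, the right side is at most $\binom dk^{-1}\sum_J\|T_{k,d}f(\pi_J,\cdot)\|_{L^q(\pi_J^\perp)}$. Since $q\le1$, I would instead raise everything to the power $q$ first, so as to work with $\int_{\pi_J^\perp}|T f|^q$, which is what the $L^q(\mathcal M_{k,d})$ norm averages. Concretely, the bound gives
\[
\|f\|_p^q\le\prod_J A_J^{1/\binom dk}\le\binom dk^{-1}\sum_J A_J,
\qquad A_J=\int_{\pi_J^\perp}|T_{k,d}f(\pi_J,y)|^q\,\mathrm{d}y.
\]

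The third step is to average over $V\in\mathrm{O}(d)$ with Haar probability measure. Each $\pi_J$ is then distributed according to the invariant probability measure on $\mathcal G_{k,d}$. The average of $\binom dk^{-1}\sum_J A_J$ therefore equals $\int_{\mathcal G_{k,d}}\int_{\pi^\perp}|T_{k,d}f|^q=\|T_{k,d}f\|_{L^q(\mathcal M_{k,d})}^q$. The left side does not depend on $V$, so taking $q$-th roots yields \eqref{ineqk}.

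The main point to check is that \eqref{af} applies with these parameters. This needs $0<p_J\le1$, which follows from $p\le1$ via the displayed relation, and it needs the general (not merely Loomis--Whitney) adjoint Finner inequality, which is assumed available from \cite{BT}. Measurability in $V$ follows from Fubini. No nondegeneracy condition \eqref{assumptionM} is needed for the inequality itself; it matters only for the case of equality.
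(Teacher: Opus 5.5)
Your proof is correct and is essentially the paper's argument: apply the adjoint Finner inequality to the coordinate $k$-planes of an orthonormal frame, average over rotations, and use a convexity step. The differences are cosmetic. The paper uses the cyclic family of $d$ coordinate planes (with $q_j=d-k$, $\theta_j=1/d$) and averages the logarithmic inequality first, obtaining the stronger Theorem \ref{thm:BTlog}, from which \eqref{ineqk} follows by Jensen; your frame-by-frame AM--GM, correctly applied after raising to the power $q$, gives \eqref{ineqk} directly but not the log-averaged form.
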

Although this settles the question of positivity, it is not expected that the constant $1$ in \eqref{ineqk} is optimal when $p<1$; see \cite[Question 10.4]{BT}. One of the main consequences of Theorem \ref{thm:adfstable2} is the confirmation of this.

As it turns out, it is natural to do this in the setting of an inequality that is stronger than \eqref{ineqk}.
While it was not explicitly observed in \cite{BT}, a very minor adaptation of their argument (see the forthcoming Section \ref{Sect:preliminaries}) yields the following, which implies Theorem \ref{thm:BT}, and indeed a range of mixed-norm inequalities, by the concavity of the logarithm and an application of Jensen's inequality.
\begin{theorem}\label{thm:BTlog} Suppose that $0<p,q\leq 1$ satisfy the scaling condition \eqref{scalingk}. Then
\begin{equation}\label{Application of Finnerintro}
\log\|f\|_{L^p(\mathbb{R}^d)}\leq \int_{\mathcal{G}_{k,d}}\log\|T_{k,d}f(\pi,\cdot)\|_{L^q(\pi^\perp)}\mathrm{d}\pi
    \end{equation}
    for all measurable functions $f:\mathbb{R}^d\rightarrow\mathbb{R}_+$.
    \end{theorem}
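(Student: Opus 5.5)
The plan is to derive \eqref{Application of Finnerintro} from a single instance of the adjoint Finner inequality \eqref{af}, applied in an arbitrary orthonormal frame, and then to average over all such frames using Haar measure on $\mathrm{O}(d)$. Since the right-hand side of \eqref{Application of Finnerintro} is an average over $\mathcal{G}_{k,d}$ and the left-hand side is rotation invariant, it suffices to find a finite family of $k$-planes for which the corresponding averaged inequality holds. We then need that family to be equidistributed on $\mathcal{G}_{k,d}$ once rotated by a random $U\in\mathrm{O}(d)$.

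\textbf{Step 1: the Finner data.} Fix $U\in\mathrm{O}(d)$ and the rotated basis $u_i=Ue_i$. Take $\Omega_i=\mathbb{R}$ with Lebesgue measure, identified with $\mathbb{R}^d$ through the coordinates $x\mapsto(\langle x,u_i\rangle)_i$; this identification is measure preserving. Index by the subsets $J\subseteq\{1,\hdots,d\}$ with $|J|=k$, so that $m=\binom dk$, and set $S_J=J^c$, which has $|S_J|=d-k$. Then $P_J$ is the orthogonal projection onto $\pi_J^\perp$, where $\pi_J=\mathrm{span}\{u_i:i\in J\}$. Hence
\[
(P_J)_*f(y)=\int_{\pi_J}f(x+y)\,\mathrm dx=T_{k,d}f(\pi_J,y).
\]

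\textbf{Step 2: checking the hypotheses of \eqref{af}.} A fixed $i$ lies in $S_J$ exactly when $i\notin J$, which happens for $\binom{d-1}{k}$ choices of $J$. So \eqref{finnerrelation} holds with $q_J=\binom{d-1}{k}$. Take $\theta_J=1/\binom dk$, so that \eqref{sum1} holds. Condition \eqref{relation:thetajpj} then reads
\[
1-\frac1{p_J}=\frac{\binom dk}{\binom{d-1}k}\Bigl(1-\frac1p\Bigr)=\frac d{d-k}\Bigl(1-\frac1p\Bigr).
\]
By \eqref{scalingk} this is precisely $p_J=q$. Applying \eqref{af} and taking logarithms gives
\[
\log\|f\|_{L^p(\mathbb{R}^d)}\le\binom dk^{-1}\sum_{|J|=k}\log\|T_{k,d}f(\pi_J,\cdot)\|_{L^q(\pi_J^\perp)}
\]
for every $U$. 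Here we adopt the usual conventions when some norm is $0$ or $\infty$; the inequality is trivial if $\|f\|_p=0$, and if the right-hand side is $+\infty$. The endpoint $p=1$ forces $q=1$, and in that case the inequality is Fubini's identity.

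\textbf{Step 3: averaging.} Integrate the last inequality over $U$ with respect to Haar probability measure on $\mathrm{O}(d)$. For each fixed $J$, the map $U\mapsto\pi_J=U\,\mathrm{span}\{e_i:i\in J\}$ pushes Haar measure forward to the unique $\mathrm{O}(d)$-invariant probability measure $\mathrm d\pi$ on $\mathcal{G}_{k,d}$. Therefore each of the $\binom dk$ summands averages to
\[
\int_{\mathcal{G}_{k,d}}\log\|T_{k,d}f(\pi,\cdot)\|_{L^q(\pi^\perp)}\,\mathrm d\pi,
\]
and \eqref{Application of Finnerintro} follows.

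\textbf{Main obstacle.} The only genuinely delicate point is measure-theoretic: justifying the measurability of $\pi\mapsto\log\|T_{k,d}f(\pi,\cdot)\|_{L^q}$ and the integration of a function that may take the values $\pm\infty$. I would handle this by working first with bounded, compactly supported $f$ bounded below on its support, where Tonelli applies directly. The general case would then follow by monotone convergence, or by noting that the integrand is bounded above by integrable quantities whenever the right-hand side is finite. The algebra of Step 2, matching the Finner exponents with \eqref{scalingk}, is routine once the choice $S_J=J^c$ is made.
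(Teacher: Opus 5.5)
Your proof is correct and follows the paper's route: apply an adjoint Finner inequality whose marginals are $k$-plane transforms along coordinate $k$-planes, take logarithms, and average over rotations using the invariance of $\mathrm{d}\pi$ on $\mathcal{G}_{k,d}$. The differences are cosmetic. The paper uses only the $d$ cyclic coordinate $k$-planes $\pi_j$ of \eqref{def:pij}, with $q_j=d-k$ and $\theta_j=1/d$, and averages $f\circ\rho$ over $\mathrm{SO}(d)$. You use all $\binom{d}{k}$ coordinate $k$-planes, with $q_J=\binom{d-1}{k}$ and $\theta_J=\binom{d}{k}^{-1}$, and rotate the frame over $\mathrm{O}(d)$; both choices yield $p_j=q$ via \eqref{scalingk}.
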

 Our main result for $T_{k,d}$ is the following.
    \begin{theorem}\label{thm:main}  Let $0<p,q< 1$ satisfy the scaling condition \eqref{scalingk}. Then there exists $D=D(p,k,d)>0$ such that 
\begin{equation}\label{ineqmain}
\log\|f\|_{L^p(\mathbb{R}^d)}\leq -D+\int_{\mathcal{G}_{k,d}}\log\|T_{k,d}f(\pi,\cdot)\|_{L^q(\pi^\perp)}\mathrm{d}\pi
    \end{equation} 
    for all measurable functions $f:\mathbb{R}^d\rightarrow\mathbb{R}_+$.
    \end{theorem}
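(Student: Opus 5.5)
We argue by contradiction, combining the proof of Theorem \ref{thm:BTlog} with the stability result Theorem \ref{thm:adfstable2}. First we recall how Theorem \ref{thm:BTlog} follows from the adjoint Finner inequality. Given an orthonormal basis (equivalently a rotation $\rho\in\mathrm{O}(d)$), consider the $\binom{d}{k}$ coordinate $k$-planes spanned by $k$ of the basis vectors. Take the sets $S_j$ to be complements of $k$-element subsets of $\{1,\hdots,d\}$, with $\Omega_i=\mathbb{R}$. Then each $i$ lies in $\binom{d-1}{k}$ of the $S_j$, so $q_j=\binom{d-1}{k}$, and we take $\theta_j=\binom{d}{k}^{-1}$. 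The relation \eqref{relation:thetajpj} then gives $p_j=q$, consistent with \eqref{scalingk}. The marginals $(P_j)_*(f\circ\rho)$ are exactly $T_{k,d}f(\pi,\cdot)$ for the rotated coordinate planes $\pi$. Taking logarithms in \eqref{af} and averaging over $\rho\in\mathrm{O}(d)$ with Haar measure gives \eqref{Application of Finnerintro}, since the pushforward of Haar measure under $\rho\mapsto\rho(\text{coordinate plane})$ is $\mathrm{d}\pi$. Assumption \eqref{assumptionM} holds because distinct $i$ lie in distinct families of sets $S_j$.

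Suppose \eqref{ineqmain} fails for every $D>0$. We then obtain $f$ (normalised by $\|f\|_p=1$) for which the average over $\rho$ of the nonnegative deficit
\[
\Delta(\rho)=\sum_j\theta_j\log\|(P_j)_*(f\circ\rho)\|_{q}-\log\|f\|_p
\]
is at most $D$, with $D$ arbitrarily small. By Chebyshev, $\Delta(\rho)\le \sqrt D$ outside a set of Haar measure at most $\sqrt D$. For each good $\rho$, Theorem \ref{thm:adfstable2} gives that $f\circ\rho$ is $\varepsilon$-close in $L^p$ to $c_\rho 1_{Q_\rho}$, where $Q_\rho$ is a box (a product set) in the coordinates. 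Since $\mathrm{O}(d)$ is connected, we can choose two good rotations $\rho,\rho'$ that are not close to differing by a signed permutation: the bad set is small, so good rotations fill most of the group. This yields two product sets, $\rho^{-1}Q_\rho$ and $\rho'^{-1}Q_{\rho'}$, which are products with respect to genuinely different frames. Moreover $f$ is close in $L^p$ to both $c1_{A}$ and $c'1_{A'}$. Because $p<1$ and these functions are indicator-type, this forces $c\approx c'$ and $|A\triangle A'|$ to be small relative to $|A|$.

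The main obstacle is to show that a set cannot be simultaneously close to a product set in two essentially different orthonormal frames, uniformly in its scale and shape. The point is that the product sets $E_1\times\cdots\times E_d$ are arbitrary measurable sets, not boxes. Our first attempt is to use the Brascamp--Lieb--Luttinger rearrangement inequality: symmetrising $f$ decreases $\int\log\|T_{k,d}f(\pi,\cdot)\|_q\,\mathrm{d}\pi$ while preserving $\|f\|_p$, so it suffices to prove \eqref{ineqmain} for radial decreasing $f$. For such $f$ the deficit $\Delta(\rho)$ is independent of $\rho$, so we may use a single frame. Theorem \ref{thm:adfstable2} then says $f$ is close to $c1_{E_1\times\cdots\times E_d}$, and the stability estimates \eqref{notgoodenough}--\eqref{ineq:pjfpj1} are rotation invariant, so $f$ is also close to $c1_{\rho(E_1\times\cdots\times E_d)}$ for every $\rho$. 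A product set that is nearly invariant under all rotations, with $f$ radial, must be close to a ball. But a ball is far from any product set: Loomis--Whitney-type inequalities are strict for balls, quantitatively, and one can compute directly that the deficit of $1_{B}$ in \eqref{af} is a positive constant. Making this quantitative via a compactness argument up to dilation gives a contradiction for small $D$. Equivalently, the near-product set $E_1\times\cdots\times E_d$ approximating a radial function must have $L^p$-distance bounded below from it, which we would verify by scaling $f$ so that its approximating set has unit measure and comparing with the ball of that measure.
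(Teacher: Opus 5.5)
\textbf{There is a genuine gap: the reduction to radial $f$.} Your setup is sound and matches the paper's strategy. The all-$\binom{d}{k}$-subsets Finner datum works as well as the paper's cyclic family, and Chebyshev plus Theorem \ref{thm:adfstable2} does give product-set approximations of $f\circ\rho$ for most $\rho$. You also correctly isolate the crux. But you never resolve it, and the radial reduction you use instead is unjustified.

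\emph{Brascamp--Lieb--Luttinger does not give it.} That inequality says multilinear integrals $\int\prod_j f_j(\ell_j(x))\,\mathrm{d}x$ \emph{increase} under rearrangement. For $q<1$, $f\mapsto\|T_{k,d}f(\pi,\cdot)\|_q$ is not of this form. By reverse H\"older it is an infimum of pairings $\int f\,(h\circ P_{\pi^\perp})/\|h\|_{q'}$, so you would need a lower bound, which BLL cannot supply.

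\emph{It fails plane by plane.} Take $f=1_F$ with $F=[-\epsilon,\epsilon]^{d-1}\times[-1,1]$, and let $\pi$ be a coordinate $k$-plane containing $e_d$. Then
$\log\|T_{k,d}f(\pi,\cdot)\|_q^q=(q(k-1)+d-k)\log\epsilon+O(1)$.
For the ball $F^*$ of the same volume the value is $(qk+d-k)\tfrac{d-1}{d}\log\epsilon+O(1)$. The coefficients differ by $(d-k)(1-q)/d>0$, so as $\epsilon\to0$ symmetrisation strictly \emph{increases} this term.

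\emph{The averaged version is a substantial theorem in its own right.} Any monotonicity could hold only after averaging over $\pi$. Letting $p\to0$, it would say that balls minimise $\int\log|P_{\pi^\perp}A|\,\mathrm{d}\pi$ among all sets of given volume. That is the Milman--Yehudayoff inequality, which the paper cites only for convex sets, extended to arbitrary sets. Your later steps (near rotation-invariance forcing a ball, balls being far from product sets) could be made to work, but they all rest on this unproved step.

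\textbf{The missing idea: a uniform two-frame lemma.} The paper never rearranges $f$. It applies BLL only where it is legitimate, namely to
$|\mathcal P_1\cap\rho\mathcal P_2|=\int\prod_i 1_{E_i}(x_i)\prod_i 1_{F_i}(\langle x,\rho e_i\rangle)\,\mathrm{d}x$.
This is a genuine BLL form, and the upper bound is exactly the direction needed. It reduces matters to centred boxes, where Lemma \ref{lem:P1P2} gives the bound $2(\sqrt2-1)|\mathcal P_1|^{1/2}|\mathcal P_2|^{1/2}$ for the $\pi/4$ rotations $\rho_\pm$.

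Lemma \ref{cor:intprod} then upgrades this to the uniform statement you lacked. It uses perturbation when $a_1/a_d<16$ and a separate slicing argument for eccentric boxes. The conclusion is: for every product set $\mathcal P_1$ there is a set $U$ of rotations with measure at least $c_d$, independent of $\mathcal P_1$, such that $|\mathcal P_1\cap\rho\mathcal P_2|\le C\max\{|\mathcal P_1|,|\mathcal P_2|\}$ with $C<1$ for all $\rho\in U$ and all product sets $\mathcal P_2$.

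Since the good rotations have measure at least $1-\delta>1-c_d$, there are good $\rho_1,\rho_2$ with $\rho_1^{-1}\rho_2\in U$. The two product approximations of $f$ are then separated in $L^p$ by a fixed multiple of $\|f\|_p$, contradicting that both are $\varepsilon$-close to $f$. Picking rotations ``far from signed permutations'' by connectedness of $\mathrm{O}(d)$ does not by itself give this uniformity over the scale and eccentricity of the product sets.
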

%The reader will quickly verify that the hypotheses of Theorem \ref{thm:main} are satisfied when $k=d-1$, where the $k$-plane transform becomes the classical Radon transform; a further admissible case is evidently when $d=2k$. We do not expect that the restrictions on $k$ and $d$ in the statement of Theorem \ref{thm:main} are necessary, and indeed we expect the conclusion of Theorem \ref{thm:main} to hold whenever $1\leq k\leq d-1$; see Section \ref{Sect:preliminaries} for more on this.
    \begin{corollary}[cf. Theorem \ref{thm:BT}]
Let $0<p,q< 1$ satisfy the scaling condition \eqref{scalingk}. Then there exists a constant $C=C(p,k,d)>1$ such that 
    \begin{equation}\label{ineqkmain}
\|T_{k,d}f\|_{L^q(\mathcal{M}_{k,d})}\geq C\|f\|_{L^p(\mathbb{R}^d)}
    \end{equation}
    for all measurable functions $f:\mathbb{R}^d\rightarrow\mathbb{R}_+$.
    \end{corollary}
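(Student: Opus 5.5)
The corollary follows from Theorem \ref{thm:main} by exponentiating and using Jensen's inequality for the logarithm. This is the same route by which Theorem \ref{thm:BTlog} implies Theorem \ref{thm:BT}.

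We may assume $0<\|T_{k,d}f\|_{L^q(\mathcal{M}_{k,d})}<\infty$. If it is infinite there is nothing to prove. If it is zero, then $T_{k,d}f(\pi,\cdot)=0$ a.e. for a.e. $\pi$, so \eqref{ineqmain} forces $\|f\|_{L^p}=0$ (interpreting $\log 0=-\infty$).

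Write
\[
\|T_{k,d}f\|_{L^q(\mathcal{M}_{k,d})}^q=\int_{\mathcal{G}_{k,d}}\|T_{k,d}f(\pi,\cdot)\|_{L^q(\pi^\perp)}^q\,\mathrm{d}\pi .
\]
Since $\mathrm{d}\pi$ is a probability measure and $\log$ is concave, Jensen's inequality gives
\[
\int_{\mathcal{G}_{k,d}}\log\|T_{k,d}f(\pi,\cdot)\|_{L^q(\pi^\perp)}\,\mathrm{d}\pi=\frac1q\int_{\mathcal{G}_{k,d}}\log\|T_{k,d}f(\pi,\cdot)\|^q_{L^q(\pi^\perp)}\,\mathrm{d}\pi\le \frac1q\log\|T_{k,d}f\|^q_{L^q(\mathcal{M}_{k,d})}=\log\|T_{k,d}f\|_{L^q(\mathcal{M}_{k,d})}.
\]
If the left-hand integral is $-\infty$, then \eqref{ineqmain} gives $\|f\|_p=0$ and the claim is trivial. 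Jensen's inequality also applies without integrability concerns here, because the positive part of the integrand is controlled by the finite $L^q$ norm via $\log t\le t/q$ applied to $t^q$.

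Combining this with \eqref{ineqmain} yields
\[
\log\|f\|_{L^p(\mathbb{R}^d)}\le -D+\log\|T_{k,d}f\|_{L^q(\mathcal{M}_{k,d})}.
\]
Exponentiating gives \eqref{ineqkmain} with $C=e^{D}>1$, since $D=D(p,k,d)>0$.

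There is no real obstacle; all the difficulty lies in Theorem \ref{thm:main}. The only care needed is the measure-theoretic bookkeeping when $\|T_{k,d}f(\pi,\cdot)\|_{L^q}$ vanishes or is infinite on sets of $\pi$, and the case split above handles this.
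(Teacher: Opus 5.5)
Your proof is correct and follows the paper's route: the corollary is obtained from Theorem \ref{thm:main} exactly as Theorem \ref{thm:BT} is obtained from Theorem \ref{thm:BTlog}. That is, you write $\|T_{k,d}f\|_{L^q(\mathcal{M}_{k,d})}^q$ as the $\mathrm{d}\pi$-average of $\|T_{k,d}f(\pi,\cdot)\|_{L^q(\pi^\perp)}^q$, apply Jensen's inequality to the concave logarithm, and exponentiate to get $C=e^{D}$. Your handling of the degenerate cases (vanishing or infinite norms, and integrability of the positive part of the logarithm) is a sensible addition that the paper leaves implicit.
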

    We conclude with some contextual remarks.
\begin{remark}
Establishing the positivity of the optimal constant $D(p,k,d)$ in \eqref{ineqmain} (or that $C(p,k,d)>1$ in \eqref{ineqkmain}) is, of course, far from identifying its value and providing any information about extremising sequences (Question 10.4 of \cite{BT}). An ostensibly simpler endeavour is to consider the associated local optimisation problem for the functional $$\Lambda(f):=\int_{\mathcal{G}_{k,d}}\log\|T_{k,d}f(\pi,\cdot)\|_{L^q(\pi^\perp)}\mathrm{d}\pi-\log\|f\|_{L^p(\mathbb{R}^d)},$$ where $0<p,q<1$ satisfy \eqref{scalingk}.
Provided $h/f\in L^\infty$, a routine variational argument using Taylor's theorem, reveals that
\begin{eqnarray*}
\begin{aligned}
\frac{\Lambda(f+\varepsilon h)-\Lambda(f)}{\varepsilon}= 
\int_{\mathbb{R}^d}\left(\int_{\mathcal{G}_{k,d}}\frac{\left[T_{k,d}f(\pi, P_{\pi^\perp} x)\right]^{q-1}}{\|T_{k,d}f(\pi,\cdot)\|_q^q}\mathrm{d}\pi-\frac{f(x)^{p-1}}{\|f\|_p^p}\right)h(x)\mathrm{d}x +O(\varepsilon),
\end{aligned}
\end{eqnarray*}
meaning that the expression in parentheses must vanish for almost every $x\in\supp(f)$  for any suitably regular (e.g. continuous) local extremiser $f$ -- should one exist. This allows one to eliminate various candidates for extremisers, such as indicator functions of balls, or Gaussians.
We note that explicit \textit{upper} bounds for $D$, should one require them, are easily obtained by testing on particular functions, such as the Gaussian function $f(x)=e^{-\pi |x|^2}$, which yields 
\begin{equation}\label{inter5} D\leq \frac{d\log p}{2p}-\frac{(d-k)\log q}{2q}.\end{equation}
We refer to Section \ref{Sect:cts} for further discussion on the nature of the optimal constant $D$.
\end{remark}

    \begin{remark}[Entropic interpretations]
As will be expected given Remark \ref{renyisub}, the inequality \eqref{ineqmain} may also be restated in terms of the R\'enyi entropy $\mbox{h}_p$. Specifically, \eqref{ineqmain} becomes
\begin{equation}\label{renyiform}
\mbox{h}_p(f)\leq -\widetilde{D}+\frac{d}{d-k}\int_{\mathcal{G}_{k,d}}\mbox{h}_q(T_{k,d}f(\pi,\cdot))\mathrm{d}\pi,
\end{equation}
where $\widetilde{D}=\frac{p}{1-p}D(p,k,d)$.
The upper bound \eqref{inter5} reveals that $\widetilde{D}\rightarrow 0$ as $p\rightarrow 1$, along with the sharp Shannon entropy inequality 
$$
\mbox{h}(f)\leq \frac{d}{d-k}\int_{\mathcal{G}_{k,d}}\mbox{h}(T_{k,d}f(\pi,\cdot))\mathrm{d}\pi.
$$
This final inequality is also a consequence of Shearer's inequality using an elementary averaging argument; see \cite[Section 8]{BT}.
    \end{remark}
    \begin{remark}[Connections with projection theorems]
      Since $\mbox{h}_p(f)\rightarrow \log |\supp(f)|$ as $p\rightarrow 0$, \eqref{renyiform} reveals that
      \begin{equation}\label{p=0lim}
      \log|A|\leq -E+\frac{d}{d-k}\int_{\mathcal{G}_{k,d}}\log|P_{\pi^\perp}A|\mathrm{d}\pi
      \end{equation}
      for all measurable subsets $A$ of $\mathbb{R}^d$, where $E$ is some nonnegative constant; here $P_{\pi^\perp}$ denotes the orthogonal projection of $\mathbb{R}^d$ onto the subspace $\pi^\perp$.
      As we provide no quantitative lower bounds on $D(p,k,d)$ in Theorem \ref{thm:main}, we may only conclude \eqref{p=0lim} with $E=0$. This may be seen as an 
      averaged form of %the inequality $$\Omega|^{\frac{d-k}{d}}\leq\prod_{j=1}^d|P_{\pi_j^\perp}\Omega|^{\frac 1d},$$ where $\pi_j=\langle e_j, e_{(j+1)\mod d},\hdots, e_{(j+k-1)\mod d}\rangle$, which is an instance 
      the uniform cover inequality of Bollob\'as and Thomason \cite{BoTh} (a case of Finner's inequality in its measure-theoretic form). Interestingly, the $p=0$ limiting inequality \eqref{p=0lim} was shown to hold with the optimal constant $E>0$ recently by Milman and Yehudayoff \cite{MY} in the setting of convex sets $A$; they show that Euclidean balls are extremisers, resulting in an extension of the classical isoperimetric inequality. A notable similarity with our approach is their use of the Brascamp--Lieb--Luttinger rearrangement inequality. We thank Shohei Nakamura for pointing this out.
      %of ...\footnote{Has this been established for all $k$ in the literature? It may be viewed as an averaged form of the uniform cover inequality by Bollob\'as and Thomason, which is an generalization of Loomis--Whitney inequality.}       For convex $\Omega$ this inequality was improved... It is conceivable that a similar limiting argument applied to the stronger \eqref{ineqmain} with the optimal $D(p,k,d)$ recovers this, while extending it to general (non-convex) $\Omega$.
    \end{remark}
%\begin{remark}[Connection to adjoint Brascamp--Lieb constants] Our arguments do not appear to easily yield any quantitative lower bound on the constant $D(p,k,d)$. This constant may be interpreted as a certain adjoint Brascamp--Lieb constant in the setting of the \textit{continuous} Brascamp--Lieb inequalities introduced by Barthe \cite{Barthe cts}. We clarify this in Section \ref{Sect:cts}.\end{remark}

\subsection*{Outline of the paper} 
In Sections \ref{sec:stability} and \ref{sec:stadfi} we prove Theorems \ref{thm:stablefinner} and \ref{thm:adfstable2}. In Section \ref{Sect:preliminaries} we establish Theorem \ref{thm:main} using Theorem \ref{thm:adfstable2} along with an application of the Brascamp--Lieb--Luttinger rearrangement inequality. In Section \ref{sec:gs}, we  partially reprove Theorem \ref{thm:main} using the theory of adjoint Brascamp--Lieb inequalities \cite{BT}, Gaussian stability properties of the Brascamp--Lieb inequality \cite{BC:stable}, and some frame theory. Finally, in Section \ref{Sect:cts}, we interpret Theorem \ref{thm:main} in the setting of adjoint forms of the continuous Brascamp--Lieb inequalities.

%In Section \ref{Sect:preliminaries} we use the theory of adjoint Brascamp--Lieb inequalities \cite{BT} to reduce Theorem \ref{thm:main}, and improved lower bounds on the $k$-plane transform more generally, to a stability problem in the classical theory of Brascamp--Lieb inequalities. This problem is then addressed in Section \ref{Sect:fusion} by combining a recent stability result of Christ and the first author \cite{BC:stable} with a characterisation of tight fusion frames due to Casazza, Fickus, Mixon, Wang and Zhou \cite{CFMWZ}. Finally, we interpret our results in the setting of adjoint forms of the continuous Brascamp--Lieb inequalities of Barthe \cite{Barthe cts} in Section \ref{Sect:cts}.

%In Section \ref{Sect:preliminaries} we use the theory of adjoint Brascamp--Lieb inequalities \cite{BT} to reduce Theorem \ref{thm:main}, and improved lower bounds on the $k$-plane transform more generally, to a stability problem in the classical theory of Brascamp--Lieb inequalities. This problem is then addressed in Section \ref{Sect:fusion} by combining a recent stability result of Christ and the first author \cite{BC:stable} with a characterisation of tight fusion frames due to Casazza, Fickus, Mixon, Wang and Zhou \cite{CFMWZ}. Finally, we interpret our results in the setting of adjoint forms of the continuous Brascamp--Lieb inequalities of Barthe \cite{Barthe cts} in Section \ref{Sect:cts}.

%%%%%%%%%%%%%%%%%%%%%%%%%%%%%%%%%%%%%%%%
\section{Stability for Finner's inequality}\label{sec:stability}
A common approach to establishing stability for a given inequality is to follow a suitable proof of that inequality and examine what occurs at various stages when the inequality is nearly attained. Both Finner's inequality and its adjoint are consequences of multiple applications of H\"older's inequality, and so the following stable form of H\"older's inequality will play an important role in our proofs of both Theorems \ref{thm:stablefinner} and \ref{thm:adfstable2}.
\begin{theorem}[\cite{Aldaz,Aldaz2}]\label{thm:aldaz}
    Suppose $p_j\ge1$ are such that $\sum_{j=1}^mp_j^{-1}=1$, and suppose that $0\not=f_j\in L^{p_j}(\Omega)$ are nonnegative functions for $1\leq j\leq m$. Then 
\[
\int\prod_{j=1}^mf_j\le \prod_{j=1}^m\|f_j\|_{p_j}\Bigg(1-\frac{p_{\max}}{p_{\max}-1}\sum_{j=1}^m\frac 1{p_j}\Big\|\frac{f_j^{p_j/2}}{\|f_j\|_{p_j}^{p_j/2}}-\sum_{\ell=1}^m\frac{1}{p_\ell}\frac{f_\ell^{p_\ell/2}}{\|f_\ell\|_{p_\ell}^{p_\ell/2}}\Big\|_2^2\Bigg),
\]
where $p_{\max}=\max\{p_1,\hdots,p_m\}$.
\end{theorem}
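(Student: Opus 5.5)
By homogeneity we may normalise so that $\|f_j\|_{p_j}=1$ for each $j$. We then set $u_j:=f_j^{p_j/2}$, so that $\|u_j\|_2=1$ and $f_j=u_j^{2\alpha_j^{-1}\cdot\frac{1}{p_j}\alpha_j}$, that is, $f_j=u_j^{2/p_j}$. Writing $\alpha_j:=1/p_j$, so that $\sum_j\alpha_j=1$ and $\alpha_{\min}=1/p_{\max}$, we have
\[
\prod_j f_j=G^2,\qquad G:=\prod_j u_j^{\alpha_j}.
\]
We also introduce the pointwise arithmetic mean, second moment and variance
\[
A:=\sum_j\alpha_ju_j,\qquad M_2:=\sum_j\alpha_ju_j^2,\qquad V:=M_2-A^2=\sum_j\alpha_j(u_j-A)^2.
\]
With these normalisations the claimed estimate reads
\[
\int G^2\le 1-c\int V,\qquad c:=\frac{p_{\max}}{p_{\max}-1}=\frac{1}{1-\alpha_{\min}}.
\]
Since $\int M_2=\sum_j\alpha_j\|u_j\|_2^2=1$, the right-hand side equals $\int (M_2-cV)$. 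It therefore suffices to prove the pointwise inequality
\[
M_2-G^2\ge \frac{1}{1-\alpha_{\min}}\,(M_2-A^2),
\]
or equivalently
\begin{equation*}
A^2-G^2\ge \frac{\alpha_{\min}}{1-\alpha_{\min}}\sum_j\alpha_j(u_j-A)^2,
\end{equation*}
for all nonnegative reals $u_1,\hdots,u_m$. This is a refinement of the weighted arithmetic--geometric mean inequality, and integrating it over $\Omega$ gives the theorem.

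The main obstacle is proving this pointwise refined AM--GM inequality with the sharp constant $\alpha_{\min}/(1-\alpha_{\min})$. The inequality is homogeneous of degree two, so we may normalise $A=1$. The aim is then to show that, for fixed $A$ and fixed variance $V$, the geometric mean $G$ is largest in a two-point configuration. The first thing I would try is a Lagrange-multiplier (or concavity) argument applied to $\log G=\sum_j\alpha_j\log u_j$ under the two constraints $\sum_j\alpha_ju_j=1$ and $\sum_j\alpha_ju_j^2=1+V$. At a critical point each $u_j$ satisfies $1/u_j=\lambda+2\mu u_j$, a quadratic equation, so the $u_j$ take at most two distinct values.

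This reduces matters to a two-value problem. One value $s$ carries total weight $\beta$ and the other value $t$ carries weight $1-\beta$, where $\beta$ is a sum of some of the $\alpha_j$; in particular $\beta\ge\alpha_{\min}$ and $1-\beta\ge\alpha_{\min}$ after relabelling. For this configuration one must verify
\[
(\beta s+(1-\beta)t)^2-s^{2\beta}t^{2(1-\beta)}\ge \frac{\alpha_{\min}}{1-\alpha_{\min}}\,\beta(1-\beta)(s-t)^2 .
\]
To check it, I would set $t=1$ and $s=x^2$, and compare second derivatives in $x$ about $x=1$, together with the behaviour as $x\to0$ and as $x\to\infty$. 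It is then plausible, and should be checked, that the worst case is $\min(\beta,1-\beta)=\alpha_{\min}$, and that the constant $\min(\beta,1-\beta)/\max(\beta,1-\beta)$ works for every weight $\beta$. Boundary cases in which some $u_j=0$ are handled directly: there $G=0$, and the inequality becomes $A^2\ge\frac{\alpha_{\min}}{1-\alpha_{\min}}V$, which follows from the elementary bound $V\le A^2(1-\alpha_{\min})/\alpha_{\min}$.

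If the Lagrange reduction proves too delicate, a fallback is to use the known two-sided AM--GM refinements of Aldaz,
\[
\alpha_{\min}\sum_j\alpha_j\Big(x_j^{1/2}-\sum_\ell\alpha_\ell x_\ell^{1/2}\Big)^2\le \sum_j\alpha_jx_j-\prod_j x_j^{\alpha_j},
\]
applied with $x_j=u_j^2$. This gives $M_2-G^2\ge\alpha_{\min}V$, and the sharper constant $1/(1-\alpha_{\min})$ needed above would then be obtained by combining this with the identity $M_2-A^2=V$ through a convex-combination argument. Such an argument should only cost a factor tracked by $\alpha_{\min}$.
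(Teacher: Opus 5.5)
The paper gives no proof of this statement; it quotes it from Aldaz. Your reduction is sound. With $u_j=f_j^{p_j/2}$ normalised in $L^2$, the theorem follows by integrating the pointwise bound $A^2-G^2\ge\frac{\alpha_{\min}}{1-\alpha_{\min}}V$.

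The Lagrange step is also valid. For fixed $A$ and $V>0$ the constraint gradients $(\alpha_j)$ and $(2\alpha_ju_j)$ are independent, so an interior maximiser of $G$ takes at most two values. The two groups carry weights $\beta,1-\beta\ge\alpha_{\min}$. The case $G=0$ is covered by $(\sum_j\alpha_ju_j)^2\ge\sum_j\alpha_j^2u_j^2\ge\alpha_{\min}M_2$.

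\textbf{The gap.} The two-point inequality is the entire content of the theorem, and you leave it as ``plausible''. Note that $G\le A$ (weighted AM--GM for the $u_j$) already gives $M_2-G^2\ge V$, which is the theorem with constant $1$. All the work lies in upgrading $1$ to $1/(1-\alpha_{\min})$.
\begin{itemize}
\item Comparing second derivatives at $x=1$ and examining $x\to0$, $x\to\infty$ does not establish a global inequality on $(0,\infty)$.
\item The fallback cannot work. The bound $M_2-G^2\ge\alpha_{\min}V$ is weaker than the trivial one. No convex combination of $G\le A$, $M_2-A^2=V$ and $M_2-G^2\ge\alpha_{\min}V$ can produce a constant above $1$, since the formal assignment $G=A$, $M_2=A^2+V$ satisfies all three while violating the target.
\end{itemize}
What is needed is a genuine quantitative lower bound on $A^2-G^2$.

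\textbf{The fix.} Adding $V=\beta(1-\beta)(s-t)^2$ to both sides and setting $x=s^2$, $y=t^2$, your two-point inequality becomes
$\beta x+(1-\beta)y-x^\beta y^{1-\beta}\ge\frac{\beta(1-\beta)}{1-\alpha_{\min}}(\sqrt x-\sqrt y)^2$.
Assume $\beta\le\frac12$ (otherwise swap $x$ and $y$). Then weighted AM--GM with weights $1-2\beta$ and $2\beta$ gives
\[
\beta x+(1-\beta)y-\beta(\sqrt x-\sqrt y)^2=(1-2\beta)y+2\beta\sqrt{xy}\ge y^{1-2\beta}(xy)^{\beta}=x^\beta y^{1-\beta}.
\]
This is the Kittaneh--Manasrah refinement, with constant $\min(\beta,1-\beta)=\beta(1-\beta)/\max(\beta,1-\beta)$. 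Since $\max(\beta,1-\beta)\le1-\alpha_{\min}$, this yields exactly the required constant. In your $A^2-G^2$ form it is the constant $\min(\beta,1-\beta)/\max(\beta,1-\beta)$ you guessed, with worst case $\min(\beta,1-\beta)=\alpha_{\min}$. With this step inserted in place of the derivative check and the fallback, your argument becomes a complete proof.
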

Consequently, if H\"older's inequality is nearly saturated in the sense that
\[
\int\prod_{j=1}^mf_j\ge (1-\delta) \prod_{j=1}^m\|f_j\|_{p_j}
\]
for some $\delta>0$,
then the normalised (nonnegative) functions $f_j^{p_j/2}$ are necessarily close to one another in $L^2$ in the sense that
\[
\Big\|\frac{f_j^{p_j/2}}{\|f_j\|_{p_j}^{p_j/2}}-\frac{f_\ell^{p_\ell/2}}{\|f_\ell\|_{p_\ell}^{p_\ell/2}}\Big\|_2\lesssim \delta^{\frac 12}
\]
for all $j,\ell$. We shall repeatedly exploit this stability property in this section.

Before we begin our proof of Theorem \ref{thm:stablefinner}
we briefly review the classical proof \cite{finner} of Finner's inequality,
which proceeds by induction on $d$ and H\"older's inequality. Setting $I_{d-1}=\{1,\hdots,d-1\}$ and $M_d=\{j:d\in S_j\}$, we write
\begin{equation}
    \label{ineq:pfoffinner}
\begin{aligned}
    \int_{\Omega}\prod_{j=1}^mg_j\circ P_{j}\mathrm{d}\mu&=\int_{\Omega}\prod_{j\in M_d}g_j\circ P_{j}\prod_{j\notin M_d}g_j\circ P_{j}\mathrm{d}\mu\\
    &\le \int_{\Omega_{I_{d-1}}}\prod_{j\in M_d}\Big(\int_{\Omega_d} (g_j\circ P_{j})^{q_j}\mathrm{d}\mu_d\Big)^{\frac 1{q_j}}\cdot\prod_{j\notin M_d}g_j\circ P_{j}\mathrm{d}\mu_{I_{d-1}}\\
    &\le \prod_{j=1}^m\|g_j\|_{q_j}.
\end{aligned}
\end{equation}
In the second line we have used H\"older's inequality with the hypothesis $\sum_{j\in M_d}q_j^{-1}=1$, and for the last line, we have used Finner's inequality on $\Omega_{I_{d-1}}=\prod_{i=1}^{d-1}\Omega_i$. With this argument in mind we turn to the proof of Theorem \ref{thm:stablefinner}.
\subsection{Proof of Theorem \ref{thm:stablefinner}}
 When $d=1$, Theorem \ref{thm:aldaz} directly implies Theorem \ref{thm:stablefinner}. Indeed, in this case, $P_j$ is the identity map on $\Omega$ for all $j$. By Theorem \ref{thm:aldaz},  \eqref{finnerstablecond} tells us that
   \[
   \Big\|\frac{g_j^{q_j/2}}{\|g_j\|_{q_j}^{q_j/2}}-\frac{g_\ell^{q_\ell/2}}{\|g_\ell\|_{q_\ell}^{q_\ell/2}}\Big\|_2\lesssim \delta^{\frac 12}, 
   \]
   for all $j,\ell\in\{1,\hdots,m\}$.
   Taking $h_1=g_1^{q_1}$ and $c_j=\|g_j\|_{q_j}^{q_j}/\|g_1\|_{q_1}^{q_1}$, the above estimate implies the desired inequality \eqref{finnerstablest}. Evidently we reach the same conclusion if we take $h_1=g_\ell^{q_\ell}$ for any $\ell=1,\hdots,m$.
   
   Turning to the inductive step, we suppose that Theorem \ref{thm:stablefinner} holds for $d-1$. Define 
    \[
    M_0:=\{j:S_j=\{1,\hdots,d\}\}.
    \]
    We first treat the case $M_0=\emptyset$, which means that all of the $P_j$ are projections onto proper subspaces of $\Omega$. This case includes the Loomis--Whitney inequality of course.
    %The proof follows the arguments used in the proof of Theorem \ref{thm:stableLW}.

Assume that $M_0=\emptyset$ and fix $j(i)\in M_i$ for all $i=1,\hdots, d$. By \eqref{ineq:pfoffinner}, the condition \eqref{finnerstablecond} implies that 
\[
\int_{\Omega_{I_{d-1}}}\prod_{j\in M_d}\Big(\int (g_j\circ P_{j})^{q_j}\mathrm{d}\mu_d\Big)^{\frac 1{q_j}}\cdot\prod_{j\notin M_d}g_j\circ P_{j}\mathrm{d}\mu_{I_{d-1}}\ge(1-\delta) \prod_{j=1}^m\|g_j\|_{q_j}.
\]
Let $\tilde S_j:=S_j\setminus \{d\}$ and $\tilde P_j:\Omega_{I_{d-1}}\rightarrow \Omega_{\tilde S_j}$ denote the corresponding projection maps.
Define $\tilde g_j$ by $\tilde g_j=g_j$ for $j\notin M_d$, and 
\[
\tilde g_j(\tilde P_j\tilde x):=\Big(\int g_j(P_{j}(\tilde x,x_d))^{q_j}\mathrm{d}\mu_d(x_d)\Big)^{\frac 1{q_j}}
\]
for $j\in M_d$ and $\tilde x=(x_i)_{i\in I_{d-1}}\in \Omega_{I_{d-1}}$. It follows that
\[
\int_{\Omega_{I_{d-1}}}\prod_{j=1}^m\tilde g_j\circ \tilde P_{j}\mathrm{d}\mu_{I_{d-1}}\ge(1-\delta) \prod_{j=1}^m\|\tilde g_j\|_{q_j}
\]
and 
$$\sum_{j=1}^m\frac{1}{q_j}\mathrm{1}_{\tilde{S}_j}(i)=1$$ for all $1\leq i\leq d-1$.
Thus, by the inductive hypothesis,
\eqref{finnerstablecond} implies that
\[
\|\tilde g_j^{\frac{q_j}{2}}-\tilde H_{j}^{\frac 12}\|_2\lesssim \delta^{\frac 12} \|\tilde g_j\|_{q_j}^{\frac{q_j}2 }
\]
for all $j$, where $\tilde H_{j}(\tilde P_j\tilde x)= c_{j}\prod_{i\in \tilde S_j}h_{i}(x_i)$ and  
\[
\begin{aligned}
    h_i(x_i)&=\int (\tilde g_{j(i)}\circ \tilde P_{j(i)})^{q_{j(i)}}\mathrm{d}\mu_{\tilde S_{j(i)}\setminus \{i\}}\\
    &=\int (g_{j(i)}\circ  P_{j(i)} )^{q_{j(i)}}\mathrm{d}\mu_{ S_{j(i)}\setminus \{i\}}
\end{aligned}
\]
for each $i=1, \hdots, d-1$.
Since $\tilde g_j=g_j$ and $\tilde H_j(\tilde P_j\tilde x)=H_j(P_jx)$ for $j\notin M_d$, we have the desired inequality \eqref{finnerstablest} for such $g_j$.
By the same argument replacing $M_d$ by $M_i$, \eqref{finnerstablecond} also gives that
\[
\|g_j^{\frac{q_j}{2}}-H_{j}^{\frac 12}\|_2\lesssim \delta^{\frac 12} \|g_j\|_{q_j}^{\frac{q_j}2 },
\]
for $j\notin M_i$. Since $M_0=\emptyset$, for each $j$ there exists $i$ such that $j\notin M_i$, which gives the desired result.

Now assume that $M_0\neq \emptyset$, meaning that some of the projections $P_j$ are the identity map on $\Omega$. Set 
\begin{equation}\label{for Holder}
\frac{1}{q}:=1-\sum_{j\in M_0}\frac 1{q_j},
\end{equation}
and note that $q<\infty$ by \eqref{assumptionM}.
By H\"older's inequality,
\[
\begin{aligned}
     \int_{\Omega}\prod_{j=1}^mg_j\circ P_{j}\mathrm{d}\mu&=\int_{\Omega}\prod_{j\in M_0}g_j\circ P_{j}\prod_{j\notin M_0}g_j\circ P_{j}\mathrm{d}\mu\\
     &\le \Big(\prod_{j\in M_0}\|g_j\|_{q_j}\Big)\Big(\int \prod_{j\notin M_0}(g_j\circ P_{j})^q\mathrm{d}\mu\Big)^{\frac 1q}\\
     &\le \prod_{j=1}^m\|g_j\|_{q_j}.
\end{aligned}
\]
In the last inequality we have applied Finner's inequality with the condition \begin{equation}\label{inter4}
\sum_{j\notin M_0}\frac q{q_j}1_{S_j}\equiv 1
\end{equation}
coming from \eqref{for Holder} and the assumption \eqref{assumptionM}. Thus, if \eqref{finnerstablecond} holds, then
\begin{equation}\label{inter1}
\int_{\Omega}\prod_{j\in M_0}g_j\circ P_{j}\prod_{j\notin M_0}g_j\circ P_{j}\mathrm{d}\mu \ge(1-\delta) \Big(\prod_{j\in M_0}\|g_j\|_{q_j}\Big)\Big(\int \prod_{j\notin M_0}(g_j\circ P_{j})^q\mathrm{d}\mu\Big)^{\frac 1q}
\end{equation}
and
\begin{equation}\label{inter2}
\int \prod_{j\notin M_0}(g_j\circ P_{j})^q\mathrm{d}\mu \ge (1-\delta)^q \prod_{j\notin M_0}\|g_j^q\|_{\frac{q_j}q}.
\end{equation}
Note that $q_j/q\ge1$ for $j\notin M_0$.
Using \eqref{inter2} we may apply the argument used above (in the case $M_0=\emptyset$) to deduce \eqref{finnerstablest} from \eqref{finnerstablecond} for all $j\notin M_0$. Further, applying Theorem \ref{thm:aldaz} to \eqref{inter1},
\[
\Big\|g_j^{\frac{q_j}{2}}-\tilde c_j\prod_{\ell\notin M_0}(g_\ell\circ P_\ell)^{\frac q2}\Big\|_2\lesssim  \delta^{\frac 12} \|g_j\|_{q_j}^{\frac{q_j}{2}}
\]
for all $j\in M_0$.
Additionally, we observe that $\|g_\ell\|_{q_\ell}^{q_\ell/2}\sim \|H_\ell^{1/2}\|_2=c_\ell^{1/2}\prod_{i\in S_\ell}\|h_i\|_1^{1/2}$ for $\ell\notin M_0$ and
\[
\|g_j\|_{q_j}^{\frac {q_j}2}\sim \tilde c_j\Big\|\prod_{\ell\notin M_0}(g_\ell\circ P_\ell)^{\frac q2}\Big\|_2\sim \tilde c_j\prod_{\ell\notin M_0}\|g_\ell\|_{q_\ell}^{\frac q2},
\]
for $j\in M_0$. To obtain \eqref{finnerstablest} for $j\in M_0$, it therefore suffices to show that 
\begin{equation}
    \label{Fstable2}
\begin{aligned}
\Big\|\prod_{\ell\notin M_0}(g_\ell\circ P_\ell)^{\frac q2}-\prod_{\ell\notin M_0}(H_\ell\circ P_\ell)^{\frac {q}{2q_\ell}}\Big\|_2&\lesssim  \delta^{\frac 12}\prod_{\ell\notin M_0}\|g_{\ell}\|_{q_{\ell}}^{\frac q2}.
\end{aligned}
\end{equation}
Indeed, combining the above three inequalities, for $j\in M_0$, we obtain that $$\Big\|g_j^{\frac {q_j}2}-\tilde c_j\prod_{\ell\notin M_0}(H_\ell\circ P_\ell)^{ \frac {q}{2q_\ell}}\Big\|_2\lesssim  \delta^{\frac 12}\|g_j\|_{q_j}^{\frac {q_j}2}.$$  Thus, \eqref{finnerstablest} for $j\in M_0$ follows by taking $c_j=\tilde c_j^{2}\prod_{\ell\notin M_0}c_\ell^{q/q_\ell}$, since
\begin{equation}
    \label{Hj=cj'hi}
\tilde c_j^2\prod_{\ell\notin M_0}(H_\ell\circ P_\ell)^{ \frac {q}{q_\ell}}=c_j \prod_{i=1}^d h_i=H_j,
\end{equation}
by \eqref{inter4}.

It remains to prove \eqref{Fstable2}. 
We first deal with the portion of the domain in which $g_\ell^{q_\ell}(P_\ell x)$ is significantly larger than $H_\ell(P_\ell x)$ for sufficiently many indices $\ell$. Defining sets $ \tilde A_{\ell}\subset \Omega_{S_\ell}$ by 
\[
 \tilde A_{\ell}=\left\{y\in\Omega_{S_\ell}: g_\ell(y)^{ {q_\ell}}\ge 10^2H_\ell(y)\right\},
\]
it immediately follows that
\begin{equation}
    \label{ineq:gellHellerror}
\|g_\ell^{\frac {q_\ell}2}1_{\tilde A_{\ell}}\|_2+\|H_\ell^{\frac 12}1_{\tilde A_{\ell}}\|_2\lesssim \|(g_\ell^{\frac {q_\ell}2}-H_\ell^{\frac 12})1_{\tilde A_\ell}\|_2\lesssim  \delta^{\frac 12} \|g_\ell\|_{q_\ell}^{\frac{q_\ell}2}.
\end{equation}
For a subset $L\subset M_0^c$, define a set $A_L$ in $\Omega$ by
\[
A_L:=\bigcap_{\ell\in L}P_\ell^{-1}(\tilde A_\ell),
\]
so that $g_\ell^{q_\ell}(P_\ell x)\geq 10^2H_\ell(P_\ell x)$ for all $\ell\in L$ and $x\in A_L$.
We distinguish two cases according to whether $L$ satisfies 
\begin{equation}
    \label{conditiononL}
\sum_{\ell\in L}\frac q{q_\ell}\ge1.
\end{equation}
%This condition will ensure that $L$ contains sufficiently many indices $\ell$ for which $g_\ell^{q_\ell/2}(P_\ell x)\geq 10H_\ell(P_\ell x)$ for $x\in A_L$.

We begin by showing that
\begin{equation}
\label{ineq:differenceonA}
\begin{aligned}
    \Big\|\prod_{\ell\notin M_0}(g_\ell\circ P_\ell)^{\frac q2}-\prod_{\ell\notin M_0}(H_\ell\circ P_\ell)^{\frac {q}{2q_\ell}}\Big\|_{L^2(  A)}\lesssim \delta^{\frac 12} \prod_{\ell\notin M_0}\|g_\ell\|_{q_\ell}^{\frac q2},
\end{aligned}    
\end{equation}
where  
\[
A:=\bigcup_{L:\sum_{\ell\in L}(q/q_\ell)\ge1} A_{L}.
\]
To prove \eqref{ineq:differenceonA}, it suffices to show that for $L$ satisfying \eqref{conditiononL},
\[
    \Big\|\prod_{\ell\notin M_0}(g_\ell\circ P_\ell)^{\frac q2}-\prod_{\ell\notin M_0}(H_\ell\circ P_\ell)^{\frac {q}{2q_\ell}}\Big\|_{L^2(A_L)}
    \lesssim \delta^{\frac{1}{2}}\prod_{\ell\notin M_0}\|g_\ell\|_{q_\ell}^{\frac q2},
\]
which by the triangle inequality may be reduced to the two estimates \begin{equation}\label{inter3}    \Big\|\prod_{\ell\notin M_0}(g_\ell\circ P_\ell)^{ \frac q2}\Big\|_{L^2(  A_L)},\; \Big\|\prod_{\ell\notin M_0}(H_\ell\circ P_\ell)^{\frac {q}{2q_\ell}}\Big\|_{L^2(  A_L)}\lesssim \delta^{\frac{1}{2}}\prod_{\ell\notin M_0}\|g_\ell\|_{q_\ell}^{\frac q2}.\end{equation} 
Since $P_\ell(A_L)\subset \tilde A_\ell$ for all $\ell\in L$, and recalling \eqref{inter4}, Finner's inequality together with \eqref{ineq:gellHellerror} and \eqref{conditiononL} implies that
\[
\begin{aligned}
    \Big\|\prod_{\ell\notin M_0}(g_\ell\circ P_\ell)^{\frac q2}\Big\|_{L^2(  A_L)}&\le \prod_{\ell\notin M_0}\|g_\ell1_{P_\ell(A_L)}\|_{q_\ell}^{\frac q2}\\
    &\le \prod_{\ell\in L}\|g_\ell1_{\tilde A_\ell}\|_{q_\ell}^{\frac q2}\prod_{\ell'\notin M_0\cup L}\|g_{\ell'}\|_{q_{\ell'}}^{\frac q2} \\
    &\lesssim  \delta^{\sum_{\ell\in L}\frac{q}{2q_\ell}}\prod_{\ell\notin M_0}\|g_\ell\|_{q_\ell}^{\frac q2} \\
    &\le \delta^{\frac 12}\prod_{\ell\notin M_0}\|g_\ell\|_{q_\ell}^{\frac q2}.
\end{aligned}
\]
Similar reasoning also establishes the second estimate in \eqref{inter3}.

Returning to \eqref{Fstable2}, having established \eqref{ineq:differenceonA} we now consider the portion coming from $ A^c$, the complement of $A$. Unlike for the estimate on $A$, we need to confront the difference of the two products. Although \eqref{finnerstablest} holds for all $\ell \notin M_0$, it is not immediately clear how to exploit these inequalities simultaneously. To overcome this, we begin by establishing the pointwise estimate
\begin{equation}
    \label{ineq:AMGMineq}
\Big|\prod_{\ell\notin M_0}\Big(\frac{g_\ell(P_\ell x)^{q_\ell/2}}{H_\ell(P_\ell x)^{1/2}}\Big)^{\frac q {q_\ell}}-1\Big|\lesssim \sum_{\ell\notin M_0}\Big|\frac{g_\ell(P_\ell x)^{q_\ell/2}}{H_\ell(P_\ell x)^{1/2}}-1\Big|,
\end{equation}
allowing us to control the product by a sum whenever $x\in A^c$. This reduces the problem to applying \eqref{finnerstablest} for only a single index $\ell$. It is worth noting that $H_\ell(P_\ell x)>0$ for all $\ell$ and $x\in A^c$. Indeed, suppose that $H_{\ell'}(P_{\ell'} x)=0$ for some $\ell'$ and $x\in A^c$. Then $h_i(x_i)=0$ for some $i\in S_{\ell'}$, which implies $H_\ell(P_\ell x)=0$ for all $\ell\in M_i\cap M_0^c$. By the definition of $A_L$, we have $x\in A_{M_i\cap M_0^c}$. Since $\sum_{\ell\in M_i\cap M_0^c}(q/q_\ell)=1$, it follows that $x\in  A$, a contradiction.
 
 For notational simplicity set
\[
a_\ell=a_\ell(x):=\frac{g_\ell(P_\ell x)^{q_\ell/2}}{H_\ell(P_\ell x)^{1/2}}.
\]
 To establish \eqref{ineq:AMGMineq},  we fix an $x\in A^c$ and define the index set
\[
L_x:=\left\{\ell\in M_0^c : x\in P_\ell^{-1}(\tilde A_\ell) \right\}.
\]
Since $x\in A_{L_x}$ by definition, 
\begin{equation}
    \label{conditiononLx}
\sum_{\ell\in L_x}\frac q{q_\ell}<1.
\end{equation}
Then $x\in P_\ell^{-1}(\tilde A_\ell)$ if and only if  $\ell \in  L_x$, which means that 
\begin{equation}
    \label{ineq:glhlfinite}
    \begin{aligned}
        0\le a_\ell \le 10, \quad &\text{ for }  \ell\in M_0^c\setminus L_x, \\
        a_\ell\ge10,\quad  &\text{ for }  \ell \in L_x.
    \end{aligned}
\end{equation}
By \eqref{conditiononLx} and \eqref{ineq:glhlfinite} it follows that
\begin{equation}
    \label{ineq:AMGMLx}
\Big|\prod_{\ell \in L_x}a_\ell^{\frac q {q_\ell}}-1\Big|\le\prod_{\ell \in L_x}a_\ell^{\frac q {q_\ell}}\le \max_{\ell\in L_x}a_\ell\lesssim  \sum_{\ell\in L_x}|a_\ell-1|.
\end{equation}
%Here, we apply $\prod_{\ell\in L_x}a_\ell^{q/q_\ell}\le\sum_{\ell\in L_x}(q/q_\ell)a_\ell+1-\sum_{\ell\in L_x}(q/q_\ell)$. 
On the other hand, by the mean value theorem,
\begin{equation}
    \label{ineq:AMGMLxc}
    \Big|\prod_{\ell \in M_0^c \setminus L_x}a_\ell^{\frac q {q_\ell}}-1\Big|\lesssim \sum_{\ell\in M_0^c \setminus L_x}|a_\ell-1|.
\end{equation}
Indeed, if $1/2\le a_\ell \le 10 $ for all $\ell \in M_0^c \setminus L_x$, \eqref{ineq:AMGMLxc} follows directly from the mean value theorem. Otherwise, if $a_\ell\le 1/2$ for some $\ell$, then \eqref{ineq:AMGMLxc} is trivial since the right hand side of \eqref{ineq:AMGMLxc} is bounded below by $1/2$ while its left hand side is bounded above by a constant depending only on $q,q_\ell$.
Consequently, using the following inequality from \eqref{ineq:glhlfinite}, 
\[
\begin{aligned}
\Big|\prod_{\ell\notin M_0}a_\ell^{\frac q {q_\ell}}-1\Big|\le& \Big|\prod_{\ell\notin M_0} a_\ell^{\frac q {q_\ell}}-\prod_{\ell\in M_0^c\setminus L_x}a_\ell^{\frac q {q_\ell}}\Big|+\Big|\prod_{\ell\in M_0^c\setminus L_x}a_\ell^{\frac q {q_\ell}}-1\Big|\\
%\le& \prod_{\ell\in M_0^c\setminus L_x}\Big(\frac{g_\ell(P_\ell x)^{q_\ell/2}}{H_\ell(P_\ell x)}\Big)^{\frac q {q_\ell}}\times\Big|\prod_{\ell\in L_x}\Big(\frac{g_\ell(P_\ell x)^{q_\ell/2}}{H_\ell(P_\ell x)}\Big)^{\frac q {q_\ell}}-1\Big|\\
%&+\Big|\prod_{\ell\in M_0^c \setminus L_x}\Big(\frac{g_\ell(P_\ell x)^{q_\ell/2}}{H_\ell(P_\ell x)}\Big)^{\frac q {q_\ell}}-1\Big|\\
\lesssim&\Big|\prod_{\ell\in L_x}a_\ell^{\frac q {q_\ell}}-1\Big|+\Big|\prod_{\ell\in M_0^c \setminus L_x}a_\ell^{\frac q {q_\ell}}-1\Big|,    
\end{aligned}
\]
 together with \eqref{ineq:AMGMLx} and \eqref{ineq:AMGMLxc}, we conclude that \eqref{ineq:AMGMineq} holds for $x\in A^c$.
We note that \eqref{ineq:AMGMineq} does not depend on the index set $L_x$.

We now prove an analogue of \eqref{ineq:differenceonA} on $A^c$, using \eqref{ineq:AMGMineq}. The pointwise estimate \eqref{ineq:AMGMineq} yields that
\begin{equation}
\label{ineq:Ei/A}
\begin{aligned}
        \Big\|\prod_{\ell\notin M_0}(g_\ell\circ P_\ell)^{\frac q2}&-\prod_{\ell\notin M_0}(H_\ell\circ P_\ell)^{\frac {q}{2q_\ell}}\Big\|_{L^2(A^c )}\\
        &=\Big\|\Big(\prod_{\ell\notin M_0}\Big(\frac{(g_\ell\circ P_\ell)^{q_\ell/2}}{(H_\ell\circ P_\ell)^{1/2}}\Big)^{\frac q {q_\ell}}-1\Big)\prod_{\ell'\notin M_0}(H_{\ell'}\circ P_{\ell'})^{\frac {q}{2q_{\ell'}}}\Big\|_{L^2(A^c )} \\
        &\lesssim \sum_{\ell\notin M_0}\Big\|\Big(\frac{(g_\ell\circ P_\ell)^{q_\ell/2}}{(H_\ell\circ P_\ell)^{1/2}}-1\Big)\prod_{\ell'\notin M_0}(H_{\ell'}\circ P_{\ell'})^{\frac {q}{2q_{\ell'}}}\Big\|_2.
\end{aligned}    
\end{equation}
Since $$(H_\ell\circ P_\ell)^{-1}\prod_{{\ell'}\notin M_0}(H_{\ell'}\circ P_{\ell'})^{ {q}/{q_{\ell'}}}=c_\ell^{-1}\prod_{\ell'\notin M_0}c_{\ell'}^{q/q_{\ell'}}\prod_{i\notin S_\ell}h_i$$
from \eqref{Hj=cj'hi}, Fubini's theorem tells us that
\[
\begin{aligned}
    \Big\|\Big(\frac{(g_\ell\circ P_\ell)^{q_\ell/2}}{(H_\ell\circ P_\ell)^{1/2}}-1\Big)&\prod_{\ell'\notin M_0}(H_{\ell'}\circ P_{\ell'})^{\frac {q}{2q_{\ell'}}}\Big\|_2^2\\
    &=c_\ell^{-1}\prod_{\ell'\notin M_0}c_{\ell'}^{\frac q{q_{\ell'}}}\int\Big(\int \big(({g_\ell\circ P_\ell)^{\frac {q_\ell}2}}-{(H_\ell\circ P_\ell )^{\frac 12}}\big)^2\mathrm d\mu_{S_{\ell}}\Big)\prod_{i\notin S_\ell}h_i\mathrm d\mu_{S_\ell^c}\\
    &=c_\ell^{-1}\prod_{\ell'\notin M_0}c_{\ell'}^{\frac q{q_{\ell'}}}\|g_\ell^{\frac {q_\ell}2}-H_\ell^{\frac 12}\|_2^2\prod_{i\notin S_\ell}\|h_i\|_1.
\end{aligned}
\]
By this, \eqref{ineq:Ei/A}, and the observation that $\|g_\ell^{q_\ell/2}-H_\ell^{1/2}\|_2\lesssim c_\ell^{1/2}\delta^{1/2}\prod_{i\in S_\ell}\|h_i\|_1^{1/2}$ from \eqref{finnerstablest} for $\ell\notin M_0$, we conclude that
\[
\begin{aligned}
\|\prod_{\ell\notin M_0}(g_\ell\circ P_\ell)^{\frac q2}-\prod_{\ell\notin M_0}(H_\ell\circ P_\ell)^{\frac {q}{2q_\ell}}\|_{L^2( A^c)}&\lesssim \delta^{\frac 12} \prod_{\ell\notin M_0}c_{\ell}^{\frac q{2q_{\ell}}}\prod_{i=1}^d\|h_i\|_1^{\frac 12}\\
    &= \delta^{\frac 12} \prod_{\ell\notin M_0}\Big(c_\ell\prod_{i\in S_\ell}\|h_i\|_1\Big)^{\frac q{2q_\ell}}\\
    &\lesssim \delta^{\frac 12} \prod_{\ell\notin M_0}\|g_\ell\|_{q_\ell}^{\frac q2}.
\end{aligned}
\]
Finally, together with \eqref{ineq:differenceonA}, this establishes the desired inequality \eqref{Fstable2}.

\subsection{Sharpness of the exponent in \eqref{finnerstablest}}\label{sec:sharpness}
In this subsection, we show that the order of $\delta$ in \eqref{finnerstablest} is optimal, except for the Fubini case.  By the Fubini case, we mean the case in which $M_i$ consists of a single element for every $i$.  For this case,
\[
\|g_j^{\frac {q_j}2}-H_j^{\frac 12}\|_2=0
\]
for all $j$, so the optimal exponent may be regarded as infinity.

We now assume that  $M_i$ contains at least two elements for some $i$. We further assume that each $\Omega_i$ contains two disjoint measurable sets $A_i,B_i$ satisfying $0<\mu_i(A_i),\mu_i(B_i)<\infty$. This is a mild nondegeneracy assumption since if every measurable subset of $\Omega_i$ has either zero measure or full measure, then $L^{q}(\Omega_i)$ is at most one-dimensional, allowing us to eliminate the corresponding factor $\Omega_i$ from the Finner data.
%that, in particular, excludes the case in which some $\Omega_i$ consists of a single element. 
Under these assumptions, we obtain the following proposition, which shows that the exponent $1/2$ is optimal.
\begin{proposition}\label{prop:sharpfinner}
    Let $\Omega_i$ and $M_i$ satisfy the above assumptions. Suppose that \eqref{finnerrelation} and \eqref{assumptionM} hold. Then there exist nonnegative functions $g_1,\hdots, g_m$ satisfying  \eqref{finnerstablecond} such that $$  \sum_{j=1}^m\frac{\|g_j^{{q_j}/{2}}-H_j^{ 1/2}\|_{L^2(\Omega_{S_j})}}{\|g_j\|_{L^{q_j}(\Omega_{S_j})}^{{q_j}/{2}}}\gtrsim {\delta^{\frac 12}} ,$$ for all $c_j>0$ and nonnegative functions $h_i$, where $H_j=c_j\prod_{i\in S_j}h_i$.
\end{proposition}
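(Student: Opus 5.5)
The plan is to perturb a single factor of an explicit extremiser in a direction that no product can follow. Since an extremiser is a critical point of the Finner functional, the deficit will be quadratic in the perturbation size $\varepsilon$, while the distance to the manifold of extremisers will be linear in $\varepsilon$. Then $\delta\sim\varepsilon^2$ gives the claimed lower bound $\gtrsim\delta^{1/2}$.

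\emph{Construction.} Fix $i_0$ with $|M_{i_0}|\ge 2$ and two distinct indices $j_1,j_2\in M_{i_0}$. Since $\sum_{j\in M_{i_0}}q_j^{-1}=1$, both $q_{j_1},q_{j_2}>1$. Put $Q_i:=A_{i_0}\cup B_{i_0}$ for $i=i_0$ and $Q_i:=A_i$ for $i\neq i_0$, and let $g_j:=1_{\prod_{i\in S_j}Q_i}$ for every $j\neq j_1$. Let $u:=1_{A_{i_0}}/\mu_{i_0}(A_{i_0})-1_{B_{i_0}}/\mu_{i_0}(B_{i_0})$, which has mean zero on $Q_{i_0}$ and is not a.e.\ constant there. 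Define
\[
g_{j_1}(P_{j_1}x)^{q_{j_1}}:=(1+\varepsilon u(x_{i_0}))\prod_{i\in S_{j_1}}1_{Q_i}(x_i),\qquad 0<\varepsilon\le\varepsilon_0 .
\]

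\emph{Deficit.} Every factor is supported on the box $\prod_i Q_i$ and only $g_{j_1}$ varies. Fubini therefore reduces both sides of \eqref{f} to a common constant times, respectively, $\int_{Q_{i_0}}(1+\varepsilon u)^{1/q_{j_1}}\,\mathrm d\mu_{i_0}$ and $\big(\int_{Q_{i_0}}(1+\varepsilon u)\,\mathrm d\mu_{i_0}\big)^{1/q_{j_1}}=\mu_{i_0}(Q_{i_0})^{1/q_{j_1}}$. A second-order Taylor expansion of $t\mapsto (1+t)^{1/q_{j_1}}$ kills the linear term because $\int u=0$. The quadratic term is negative since $q_{j_1}>1$. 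Hence \eqref{finnerstablecond} holds with $\delta\asymp\varepsilon^2$.

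\emph{Lower bound on the distance; the main step.} Suppose the sum in the statement equals $\eta$ for some choice of $c_j,h_i$. I must show $\eta\gtrsim\varepsilon$.
\begin{itemize}
\item First I would restrict all $L^2$ norms to the boxes $\prod_{i\in S_j}Q_i$; this only decreases the errors.
\item For $j=j_2$, $g_{j_2}^{q_{j_2}/2}$ is the indicator of the box. The bound $\|1-(c_{j_2}\prod_{i\in S_{j_2}}h_i)^{1/2}\|_{L^2}\lesssim\eta$ forces $h_{i_0}^{1/2}$ to be $O(\eta)$-close in $L^2(Q_{i_0})$ to a constant multiple of $1$, after suitable normalisation.
\item To extract this, integrate out the other coordinates against $(\prod_{i\neq i_0}h_i)^{1/2}$. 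Equivalently, use that the best rank-one approximation in $L^2(Q_{i_0})\otimes L^2(\text{rest})$ of a rank-one tensor $a\otimes b$ has first factor close to a multiple of $a$. A short Cauchy--Schwarz computation should show that an $\eta$-relative rank-one error forces the first factor within relative distance $O(\eta)$ of $\mathrm{span}(a)$.
\item For $j=j_1$ the same argument, with $a=(1+\varepsilon u)^{1/2}$, shows that $h_{i_0}^{1/2}$ is within relative distance $O(\eta)$ of $\mathrm{span}((1+\varepsilon u)^{1/2})$.
\item Combining the two, $(1+\varepsilon u)^{1/2}$ must be within relative $O(\eta)$ of the constants. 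Its actual distance from the constants is $\asymp\varepsilon\|u-\bar u\|_2\gtrsim\varepsilon$, because $u$ is not constant.
\end{itemize}
This gives $\eta\gtrsim\varepsilon\asymp\delta^{1/2}$, as required.

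I expect the delicate point to be this last projection argument. The free constants $c_j$ and the unknown $h_i$ for $i\ne i_0$ could in principle degenerate, for instance by being small on much of the box. I would handle this by noting that if $\eta$ is smaller than a fixed constant, the errors force $\|H_j^{1/2}\|_2$ to be comparable to $\|g_j\|_{q_j}^{q_j/2}$. This legitimises the normalisation in the rank-one step.
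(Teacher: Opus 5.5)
Your proposal is correct and follows essentially the paper's argument. Both take two indices in some $M_{i_0}$ and perturb the $x_{i_0}$-profile by a mean-zero function of size $\asymp\delta^{1/2}$, so the deficit is only $O(\delta)$, and both exploit that the single factor $h_{i_0}$ shared by $H_{j_1}$ and $H_{j_2}$ cannot follow both profiles. The differences are minor: the paper perturbs $g_{j_0},g_{j_1}$ in opposite directions so the integrand is exactly $1-h^2$, whereas you perturb one factor and use Taylor/Jensen (normalise $\mu_{i_0}(Q_{i_0})=1$ so your two Fubini expressions match as written), and your scale-invariant orthogonal-decomposition step is a cleaner version of the paper's contradiction argument that also makes your worry about degenerate $c_j,h_i$ unnecessary.
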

\begin{proof}
Without loss of generality, assume that $M_d$ contains at least two elements. 
   Choose $j_0,j_1\in M_d$ satisfying $j_0\neq j_1$. By our assumptions on $\Omega_i$, there exist $A_i,B_i\subset \Omega_i$ such that $A_i\cap B_i=\emptyset$ and $0<\mu_i(A_i),\mu(B_i)<\infty$. For simplicity set $E_i:=A_i\cup B_i$ and after replacing $\mu_i$ by $\mu_i/\mu_i(E_i)$, assume that $\mu_i(E_i)=1$.
   Then define
\[
   \begin{aligned}
    g_{j}(P_jx)=\prod_{i\in S_j}1_{E_i}(x_{i}),&  \quad \text{ for }j\neq j_0, j_1, \\
    g_{j_n}(P_{j_n}x)=\big(1_{E_d}(x_d)+(-1)^nh(x_d)&\big)\prod_{i\in S_{j}\setminus\{d\}}1_{E_i}(x_{i}), \quad \text{ for }n=0,1,
   \end{aligned}
   \]
where $$h(x_d)=\delta^{\frac 12}\mu_d(B_d)1_{A_d}(x_d)-\delta^{\frac 12}\mu_d(A_d)1_{B_d}(x_d).$$
We note that $\mu_d(A_d)+\mu_d(B_d)=1$.

First, one can observe that $\|g_j\|_{q_j}=1,$ for $j\neq j_0,j_1$. When $j=j_0,j_1$, using the following inequality for sufficiently small $\delta>0$,
\[
\begin{aligned}
\|1_{E_d}+(-1)^nh\|_{q}^q&=(1+(-1)^n\mu_d(B_d)\delta^{\frac 12})^q\mu_d(A_d)+(1+(-1)^{n+1}\mu_d(A_d)\delta^{\frac 12})^q\mu_d(B_d)    \\
&=1+O(\delta),
\end{aligned}
\]
we have $\|g_{j_n}\|_{q_{j_n}}=1+O(\delta)$ for $n=0,1$. On the other hand, the left hand side of Finner's inequality for this data satisfies
\[
\begin{aligned}
    \int \prod_{j=1}^mg_j(P_jx)\mathrm d\mu=&\int_{E_1\times\cdots\times E_{d-1}\times A_d}\left(1-\delta \mu_{d}(B_d)^2\right) \mathrm d\mu\\
    &+\int_{E_1\times\cdots\times E_{d-1}\times B_d}\left(1-\delta\mu_{d}(A_d)^2\right) \mathrm d\mu\\
    \ge& 1-\delta,
\end{aligned}
\]
 which implies \eqref{finnerstablecond} with  $\delta$ replaced by $O(\delta)$. 
Thus, it suffices to show that 
\begin{equation}
    \label{ineq:sharpness}
\sum_{j=1}^m{\|g_{j}^{\frac{q_{j}}{2}}-H_{j}^{ \frac 12}\|_{L^2(\Omega_{S_{j}})}}\ge \frac  {c\delta^{\frac 12}}4,
\end{equation}
for all $h_i$, where $c=\delta^{-1/2}\|h\|_2$. We note that $c$ is independent of $\delta$.

We prove \eqref{ineq:sharpness} by contradiction. Assume that there exist nonnegative functions $h_i$ such that
\[
 \|g_{j}^{\frac{q_j}{2}}-H_{j}^{\frac 12}\|_2< \frac  {c\delta^{\frac 12}}4
\]
for all $j$. For $n=0,1$ we observe that
%\footnote{JB: an inequality below? Also some square roots missing on the $h_j$.;Thanks!} 
\[
\|g_{j_n}^{\frac{q_{j_n}}{2}}-H_{j_n}^{\frac 12}\|_2\ge\Big(\int \prod_{i\in S_{j_n}\setminus \{d\}}1_{E_i}\int\Big|1_{E_d}+(-1)^nh-c_{j_n}^{\frac 12}\prod_{i\in S_{j_n}}h_i^{\frac 12}\Big|^2\mathrm d\mu_d\mathrm d\mu_{S_{j_n}\setminus\{d\}}\Big)^{\frac 12}.
\]
 By the assumption, it follows that there exist constants $\tilde c_{j_n}\ge0$ such that
\[
\begin{aligned}
\|1_{E_d}+(-1)^nh-\tilde c_{j_n}h_d^{\frac 12}\|_2< \frac {c\delta^{\frac 12}}4,
\end{aligned}
\]
for $n=0,1$.
Consequently, we have $$1+O(\delta)-\frac c4\delta^{\frac 12}< \tilde c_{j_n}\|h_d\|_1^{ 1/2}< 1+O(\delta)+\frac c4\delta^{\frac 12}.$$ Combining these inequalities, we observe that
\[
\|1_{E_d}+h-(1_{E_d}-h)\|_2< \frac c2\delta^{\frac 12}+|\tilde c_{j_1}-\tilde c_{j_0}|\|h_d\|_1^{\frac 12}< 2c\delta^{\frac 12},
\]
which is a contradiction by the definition of $c$. Thus,   \eqref{ineq:sharpness} holds for all nonnegative functions $h_i$.
\end{proof}

\begin{remark}\label{rmk:sharpnessCD}
Theorem \ref{thm:stablefinner} sharpens the stability result of Cook and Dembo \cite{CD}. Indeed, by H\"older's inequality, $$\|g_j^{q_j}-H_j\|_1\le\|g_j^{q_j/2}-H_j^{1/2}\|_2\|g_j^{q_j/2}+H_j^{1/2}\|_2.$$ Thus, the $L^2$ estimate \eqref{finnerstablest} yields that 
\begin{equation}\label{cd1}
 \|g_j^{{q_j}}-H_j\|_{L^1(\Omega_{S_j})}\leq C\delta^{\frac 12} \|g_j\|_{L^{q_j}(\Omega_{S_j})}^{{q_j}}.
\end{equation}
The exponent $1/2$ is optimal except for the Fubini case, as discussed in \cite{CD}. Indeed, an $L^1$ analogue of the argument establishing Proposition \ref{prop:sharpfinner} reveals the sharpness of the exponent $1/2$ in \eqref{cd1}. It is worth noting that characteristic functions may be used to prove Proposition \ref{prop:sharpfinner}, whereas they do not suffice to verify sharpness in the  $L^1$ setting; see Remark \ref{Rem:EFKY}.
\end{remark}

\begin{comment}
   
   \[
   \begin{aligned}
   g_{j}(P_jx)=\prod_{i\in S_j}1_{[0,1]}(x_{i}),&  \quad \text{ for }j\neq j_1, \\
    g_{j}(P_{j}x)=1_{[\delta,1+\delta]}(x_d)\prod_{i\in S_{j}\setminus\{d\}}1_{[0,1]}(x_{i}),& \quad \text{ for }j= j_1,
   \end{aligned}
   \]
Then $\|g_j\|_{q_j}=1$ for all $j$ and
\[
\int \prod_{j=1}^mg_j(P_jx)\mathrm dx=\int 1_{[\delta,1]}(x_d)\prod_{i\in I_{d-1}}1_{[0,1]}(x_i)\mathrm dx=1-\delta,\]
which gives \eqref{finnerstablecond}. 

A modification of this example also shows Proposition \ref{prop:sharpfinner} holds for general $\Omega$, provided that each $\Omega_i$ contains two disjoint measurable sets $A_i,B_i$ satisfying $0<\mu_i(A_i)=\mu_i(B_i)<\infty$. This is a mild nondegeneracy assumption that, in particular, excludes the case in which some $\Omega_i$ consists of a single element.
    
\end{comment}

\subsection{Stability in the measure-theoretic setting}\label{Sect:stabmeas}
Theorem \ref{thm:stablefinner} implies the following stability result in the measure-theoretic setting. In the Euclidean case this recovers a result of Ellis, Friedgut, Kindler and Yehudayoff \cite{EFKY}; see Remark \ref{Rem:EFKY}.
\begin{corollary}\label{cor:stablefinmt}
    Under the hypotheses of Theorem \ref{thm:stablefinner}, there exists a constant $C>0$ such that if \eqref{geofinnercond} holds for a measurable set $A\subseteq \Omega$ and some $0\leq \delta<1$, then \eqref{mtb} holds for some cartesian product set $\mathcal{P}$.
\end{corollary}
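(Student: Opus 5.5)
We apply Theorem \ref{thm:stablefinner} to the indicator data $g_j:=1_{P_j(A)}$ and then convert the resulting $L^2$ closeness into a statement about sets. First we dispose of large $\delta$. If $\delta\ge \delta_0$ for a small absolute $\delta_0$, we take $\mathcal P=\emptyset$, and \eqref{mtb} holds with $C=\delta_0^{-1}$. So assume $\delta<\delta_0$.

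Since $\prod_j g_j(P_jx)\ge 1_A(x)$, \eqref{geofinnercond} gives
\[
\int\prod_j g_j\circ P_j\,\mathrm d\mu\ge (1-\delta)\prod_j\|g_j\|_{q_j}.
\]
Set $\tilde A:=\bigcap_j P_j^{-1}(P_jA)\supseteq A$. Finner's inequality and \eqref{geofinnercond} give $\mu(\tilde A\setminus A)=\mu(\tilde A)-\mu(A)\le \prod_j\mu_{S_j}(P_jA)^{1/q_j}-\mu(A)\lesssim \delta\mu(A)$. It therefore suffices to find a product set $\mathcal P$ with $\mu(\tilde A\triangle\mathcal P)\lesssim\delta\mu(A)$. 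Theorem \ref{thm:stablefinner} provides $H_j=c_j\prod_{i\in S_j}h_i$ with
\[
\|1_{P_jA}-H_j^{1/2}\|_2^2\lesssim \delta\,\mu_{S_j}(P_jA)\lesssim\delta\,\mu(A)^{\cdots},
\]
where the last comparison is a routine normalisation, since each $\mu_{S_j}(P_jA)$ is controlled by $\mu(A)$ through near-equality. This is an $L^2$ statement about squared quantities, and that is exactly what makes the exponent $1$ in $\delta$ attainable.

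The key step is to replace each $H_j^{1/2}$ by the indicator of a product set at cost $O(\delta)$ in measure. We use the elementary pointwise inequality $|1_S-u|^2\ge \tfrac14 |1_S-1_{\{u\ge 1/2\}}|$ for $u\ge0$. It shows that
\[
\mu_{S_j}\bigl(P_jA\,\triangle\,\{H_j\ge 1/4\}\bigr)\lesssim\delta\,\mu_{S_j}(P_jA).
\]
The superlevel set of a product $c_j\prod h_i$ is not itself a product, so we must show each $h_i$ is essentially two-valued. Fix $i$ and $j=j(i)$. By Fubini and Chebyshev, most points $y'\in\Omega_{S_j\setminus\{i\}}$ in the bulk of $P_jA$ satisfy two conditions: the fibre function $x_i\mapsto 1_{P_jA}(x_i,y')$ is $L^2$-close to $x_i\mapsto \kappa(y')h_i(x_i)^{1/2}$, and $\kappa(y')$ is comparable to a fixed constant $a_i^{-1/2}$. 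Choosing such a good $y'$ shows that $h_i$ is within $O(\delta)$ (in the appropriate squared-$L^2$/measure sense) of a function taking values in $\{0,a_i\}$. We then define $E_i:=\{x_i: h_i(x_i)\ge a_i/2\}$ and $\mathcal P:=E_1\times\cdots\times E_d$. Summing the errors over $i$ via Fubini, and comparing $\{H_j\ge1/4\}$ with $\prod_{i\in S_j}E_i$, gives
\[
\mu_{S_j}\Bigl(P_jA\,\triangle\,\prod_{i\in S_j}E_i\Bigr)\lesssim\delta\,\mu_{S_j}(P_jA).
\]
Here \eqref{assumptionM}, and the fact that each $i$ lies in some $S_j$, are used to get every coordinate.

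Finally, $\tilde A\triangle\mathcal P\subseteq\bigcup_j P_j^{-1}\bigl(P_jA\triangle\prod_{i\in S_j}E_i\bigr)\cap(\tilde A\cup\mathcal P)$. We bound each piece using the $L^1$-type Finner inequality with one factor replaced by the small symmetric difference, exactly as in the estimate \eqref{inter3}. This yields $\mu(\tilde A\triangle\mathcal P)\lesssim\delta\,\mu(A)$.

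We expect the main obstacle to be the two-valuedness step: choosing the good fibre $y'$ and the threshold $a_i$ uniformly, while retaining linear (not square-root) dependence on $\delta$. If the single-fibre argument proves too lossy, we would instead average over fibres, using $h_i$ from \eqref{hifinner} directly as a fibre-measure function.
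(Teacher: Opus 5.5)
Your reduction to $\tilde A$, the superlevel-set inequality, and the intermediate bound $\mu_{S_j}(D_j)\lesssim\delta\,\mu_{S_j}(P_jA)$, with $D_j:=P_jA\triangle\prod_{i\in S_j}E_i$, are workable. The gap is in your last step, which does not give a linear rate. You bound $\mu(\tilde A\cap P_j^{-1}(D_j))$ by Finner's inequality with the $j$th factor replaced by $1_{D_j}$, and this yields
\[
\mu\bigl(\tilde A\cap P_j^{-1}(D_j)\bigr)\le\mu_{S_j}(D_j)^{\frac1{q_j}}\prod_{j'\neq j}\mu_{S_{j'}}(P_{j'}A)^{\frac1{q_{j'}}}\lesssim\delta^{\frac1{q_j}}\prod_{j'=1}^m\mu_{S_{j'}}(P_{j'}A)^{\frac1{q_{j'}}}.
\]
So you only obtain $\mu(A\triangle\mathcal P)\lesssim\delta^{1/q_{\max}}\mu(A)$, for example $\delta^{1/(d-1)}$ for Loomis--Whitney. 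The analogy with \eqref{inter3} does not rescue this. There the gain comes from several small factors with $\sum_{\ell\in L}q/q_\ell\ge1$, and even then only $\delta^{1/2}$ results. A single small factor inside a Finner bound always costs the power $1/q_j$.

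What you need is a loss-free way to reassemble $A$ from its projections. In your framework, exact counting does this.
\begin{itemize}
\item Since $\mathcal P$ is a product, Fubini gives $\mu(\mathcal P\setminus\tilde A)\le\sum_j\mu_{S_j}\bigl(\prod_{i\in S_j}E_i\setminus P_jA\bigr)\prod_{i\notin S_j}\mu_i(E_i)\lesssim\delta\,\mu(\mathcal P)$.
\item By \eqref{finnerrelation}, $\mu(\mathcal P)=\prod_j\mu_{S_j}\bigl(\prod_{i\in S_j}E_i\bigr)^{1/q_j}=(1+O(\delta))\prod_j\mu_{S_j}(P_jA)^{1/q_j}$.
\item Combined with $\mu(\tilde A)\le\prod_j\mu_{S_j}(P_jA)^{1/q_j}\le(1-\delta)^{-1}\mu(A)$, this gives $\mu(\tilde A\setminus\mathcal P)=\mu(\tilde A)-\mu(\mathcal P)+\mu(\mathcal P\setminus\tilde A)\lesssim\delta\,\mu(A)$.
\end{itemize}

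The paper avoids reassembly altogether. It rewrites \eqref{geofinnercond} as $\mu(A)\ge(1-\delta)\mu(A)^{1/2}\prod_j\mu_{S_j}(P_jA)^{1/(2q_j)}$. It then adds $1_A$ as an $(m+1)$th factor with $S_{m+1}=\{1,\dots,d\}$ and exponents $2q_j$ and $2$. Theorem \ref{thm:stablefinner} then controls $\|1_A-H_{m+1}^{1/2}\|_2$ directly.

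The step you flagged as the main obstacle is not one. For every fibre $y'$, with $\kappa(y')^2=c_j\prod_{i'\in S_j\setminus\{i\}}h_{i'}(y'_{i'})$, you have $\int\bigl(1_{P_jA}(\cdot,y')-\kappa(y')h_i^{1/2}\bigr)^2\mathrm d\mu_i\ge\kappa(y')^2\inf_{a,E}\|h_i^{1/2}-a1_E\|_2^2$. Integrating in $y'$ gives $\inf_{a,E}\|h_i^{1/2}-a1_E\|_2^2\lesssim\delta\|h_i\|_1$, with no good-fibre selection or comparability of $\kappa$ needed; this is the paper's argument. Telescoping over $i\in S_j$ then gives $\|1_{P_jA}-\beta_j1_{\prod_{i\in S_j}E_i}\|_2^2\lesssim\delta\,\mu_{S_j}(P_jA)$, which is a cleaner route to the bound on $D_j$ than superlevel sets.

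Finally, your claim that each $\mu_{S_j}(P_jA)$ is controlled by $\mu(A)$ is false. Only the weighted product $\prod_j\mu_{S_j}(P_jA)^{1/q_j}$ is.
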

\begin{proof}
    Since \eqref{geofinnercond} implies
    \[
      \mu(A)\ge (1-\delta)\mu(A)^{\frac 12}\prod_{j=1}^m\mu_{S_j}(P_j(A))^{\frac 1{2q_j}},
    \]
    we have
    \begin{equation}
\label{geofincond:modified}
         \mu(A)\ge (1-\delta)\prod_{j=1}^{m+1}\mu_{S_j}(P_j(A))^{\frac 1{\tilde q_j}},
    \end{equation}
    where $\tilde q_j= 2{q_j}$ for $j=1,\hdots,m$, $\tilde q_{m+1}=2$, and $S_{m+1}=\{1,\hdots,d\}$. 
Thus, it suffices to show that \eqref{geofincond:modified} implies \eqref{mtb}.

We now consider the special choice $g_j=1_{P_j(A)}$ for a measurable set $A\subseteq \Omega$. 
Then it is straightforward to verify that $(g_j)$ satisfies \eqref{finnerstablecond} on replacing $q_j$ by $\tilde q_j$.
    Since 
    \[
    \sum_{j=1}^{m+1}\frac{1}{\tilde q_j}1_{S_j}\equiv 1,
    \]
Theorem \ref{thm:stablefinner} shows that there are nonnegative functions $h_i$ satisfying \eqref{finnerstablest}, again on replacing $q_j$ by $\tilde q_j$. We use Fubini's identity to rewrite this inequality as 
\begin{equation}
    \label{mtfineq}
\int \int\Big( (g_j\circ P_j)^{\frac {\tilde q_j}2}-c_j^{\frac 12}\prod_{i'\in S_j}h_{i'}^{\frac 12}\Big)^{2}\mathrm d\mu_i\mathrm d\mu_{S_j\setminus \{i\}} \lesssim \delta c_j\prod_{i'\in S_j}\|h_{i'}\|_1,
\end{equation}
where $i\in S_j$.
Since the $g_j$ are characteristic functions, there exist measurable sets $E_i\subseteq \Omega_i$ for $i=1,\hdots,d$ such that 
\begin{equation}
    \label{mtfineq2}
\|h_i^{\frac 12}-c_i'1_{E_i}\|_2\lesssim \delta^{\frac 12} \|h_i\|_1^{\frac 12},
\end{equation}
for some constants $c_i'>0$. Indeed, if the above fails for some $i$, then the uniform pointwise inequality
\[
\int\Big( (g_j\circ P_j)^{\frac {\tilde q_j}2}-c_j^{\frac 12}\prod_{i'\in S_j}h_{i'}^{\frac 12}\Big)^{2}\mathrm d\mu_i\gtrsim \delta c_j\|h_{i}\|_1\prod_{i'\in S_j\setminus\{i\}}h_{i'}
\]
must hold,
which contradicts \eqref{mtfineq} on integrating with respect to $\mu_{S_j\setminus \{i\}}$.

Combining \eqref{mtfineq} and \eqref{mtfineq2}, we conclude that
\[
\Big\|g_{m+1}-c_{m+1}^{\frac 12}\prod_{i=1}^dc_i'1_{E_i}\Big\|_2\lesssim \delta^{\frac 12}\|g_{m+1}\|_{\tilde q_{m+1}}^{\frac {\tilde q_{m+1}}2}.
\]
Since $g_{m+1}=1_A$, this gives the desired inequality \eqref{mtb}.
\end{proof}

\section{Stability for the adjoint Finner inequality}\label{sec:stadfi}
Here we consider stability for the adjoint Finner inequality \eqref{af}, and prove Theorem \ref{thm:adfstable2}. In \cite{BT} it was shown that equality in the adjoint Loomis--Whitney inequality occurs if and only if $f$ is of the form $f(x)=\prod_{i=1}^dh_i(x_i)$, where each $h_i: \Omega_i\rightarrow \mathbb R_+$ is a constant multiple of a characteristic function. The proof in \cite{BT} proceeds in two steps. First, it is shown that any extremiser $f$ is of the form $f(x)=\prod_{i=1}^dh_i(x_i)$ for nonnegative measurable functions $h_1,\hdots,h_d$. Second, by replacing $f$ with a (tensor) product of functions, it is deduced that  each $h_i$ is a characteristic function up to a positive constant multiple. The first step relies on the Loomis--Whitney and H\"older inequalities. Thus Theorems \ref{thm:stablefinner} and \ref{thm:aldaz}, which provide stable versions of these inequalities, indicate that a stable version of this step should be possible. 
\begin{comment}
However, the second step turns out to be more delicate. For example, if
    \[
    g(x)=\prod_{i}|x_i|^{-\frac 1{p+}}1_{[-1,1]}(x_i)
    \]
    for some $p<p_+<1$ then $\|f+\varepsilon g\|_{L^p(\mathbb{R}^d)}$ is finite, yet
%the expression $\prod_{j=1}^m\|(P_j)_*f\|_{L^{p_j}(\pi_{S_j})}^{\theta_j}$ is finite, whereas
    \[
    \prod_{j=1}^m\|(P_j)_*(f+\varepsilon g)\|_{L^{p_j}(\pi_{S_j})}^{\theta_j}
    \]
    is infinite for any $f\in L^p(\mathbb{R}^d)$ satisfying \eqref{ineq:stableadfcond}, for all $\varepsilon>0$. This makes \eqref{ineq:stableadfcond} unstable under perturbations with respect to the $L^p$ metric. Thus, the argument sketched above does not readily admit a stable version.    
\end{comment}
However, the second step turns out to be more delicate. In order to modify this second step, we make use of the following elementary consequence of Markov's inequality, which provides suitable pointwise estimates.

\begin{lemma}\label{lem:f-g}
    Let $N>0$, $S\subset \{1,\hdots,d\}$, and $f,g:\Omega_S \rightarrow \mathbb R_+$. If $\|f-g\|_2\le \varepsilon^{N+1}\|f\|_2$, then there exists a measurable set $A\subset \Omega_S$ such that $f\neq0$ on $A$, $\|f1_{A^c}\|_2+\|g1_{A^c}\|_2\lesssim \varepsilon^N\|f\|_2$, and $|f-g|\le\varepsilon |f|$ on $A$.
\end{lemma}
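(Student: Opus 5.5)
The natural set is the one where $g$ is already pointwise within a factor $\varepsilon$ of $f$. Define
\[
A:=\{y\in\Omega_S:\ f(y)\neq0,\ |f(y)-g(y)|\le\varepsilon f(y)\}.
\]
By construction $f\neq0$ and $|f-g|\le\varepsilon|f|$ on $A$, so only the two $L^2$ bounds on $A^c$ need proof. We may take $0<\varepsilon\le 1$: if $\varepsilon>1$, then $\|g\|_2\le(1+\varepsilon^{N+1})\|f\|_2\lesssim\varepsilon^{N}\|f\|_2$, and the bounds on $A^c$ are trivial.

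On $A^c$ there are two possibilities. Either $f(y)=0$, in which case $f(y)=0\le|f(y)-g(y)|/\varepsilon$. Or $f(y)>0$ and $|f(y)-g(y)|>\varepsilon f(y)$. In both cases $f\le\varepsilon^{-1}|f-g|$ pointwise on $A^c$. Integrating this (Chebyshev/Markov in $L^2$ form),
\[
\|f1_{A^c}\|_2\le\varepsilon^{-1}\|f-g\|_2\le\varepsilon^{N}\|f\|_2 .
\]

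For $g$, use $g\le f+|f-g|$ pointwise and the triangle inequality:
\[
\|g1_{A^c}\|_2\le\|f1_{A^c}\|_2+\|f-g\|_2\le(\varepsilon^{N}+\varepsilon^{N+1})\|f\|_2\lesssim\varepsilon^{N}\|f\|_2 .
\]
This completes the argument. The only step needing any care is the pointwise inequality $f\le\varepsilon^{-1}|f-g|$ on $A^c$, including where $f$ vanishes; it is elementary and I expect no real obstacle.
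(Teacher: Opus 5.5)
Your argument is the paper's proof: the same set $A$ (your condition $|f-g|\le\varepsilon f$ with $f\neq0$ is the paper's $|1-g/f|\le\varepsilon$), the same pointwise bound $\varepsilon f\le|f-g|$ on $A^c$, and the same triangle inequality for $g1_{A^c}$. Drop the aside on $\varepsilon>1$, though. The bound $(1+\varepsilon^{N+1})\|f\|_2$ is not $O(\varepsilon^N\|f\|_2)$ for large $\varepsilon$, and the lemma genuinely fails there: on $\mathbb{R}$, taking $f=1_{[0,1]}$ and $g=(1+\varepsilon^{N+1})1_{[0,1]}$ forces $A$ to be null. So $\varepsilon\le 1$ should simply be a standing assumption, as it tacitly is in the paper.
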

\begin{proof}
    Define
    \[
    A:=\{x\in \Omega_S:f(x)\neq0, |1-g/f|\le \varepsilon\}.
    \]
    Then it directly follows that $|f-g|\le\varepsilon |f|$ on $A$. Furthermore, one can observe that
    \[
    \|\varepsilon f1_{A^c}\|_2\le \|(f-g)1_{A^c}\|_2\le\|f-g\|_2\le \varepsilon^{N+1}\|f\|_2. 
    \]
    Thus, we have $\|f1_{A^c}\|_2\le \varepsilon^{N}\|f\|_2$. Since $\|g1_{A^c}\|_2\le \|(f-g)1_{A^c}\|_2+\|f1_{A^c}\|_2$, we obtain that $\|g1_{A^c}\|_2\lesssim \varepsilon^{N}\|f\|_2$, which gives the desired result.
\end{proof}
\begin{remark}\label{Rem:convert} A useful feature of Lemma \ref{lem:f-g} is that it allows $L^2$ closeness to be converted into an $L^p$ closeness for any $0<p<\infty$. More precisely, if $\|f-g\|_2\le \varepsilon \|f\|_2$, then for any $a>0$,
\[
\begin{aligned}
    \|f^a-g^a\|_{\frac 2a}&\lesssim_a \|f^a1_{A^c}\|_{\frac 2a}+\|g^a1_{A^c}\|_{\frac 2a}+\varepsilon^{\frac 12}\|f^a\|_{\frac 2a}\\
    &\lesssim_a (\varepsilon^{\frac a2}+\varepsilon^{\frac 12})\|f^a\|_{\frac 2a}. 
\end{aligned}
\]
In this way $L^2$ stability statements are seen to imply certain $L^p$ stability statements; see, for example, Remark \ref{Rem:CD}.
\end{remark}
%Motivated by this remark, we first establish the following statement, which implies \eqref{notgoodenough}.
%our approach to Theorem \ref{thm:adfstable2} is to first establish the following ostensibly weaker statement.
With Remark \ref{Rem:convert} in mind, we begin by establishing the following $L^2$-based stability statement for the adjoint Finner inequality. As we shall see, this is a natural first step towards a proof of Theorem \ref{thm:adfstable2}, given the $L^2$ nature of Theorems \ref{thm:stablefinner} and \ref{thm:aldaz}.
\begin{theorem}\label{thm:adFstable}
    Under the hypotheses of Theorem \ref{thm:adfstable2}, for $\varepsilon>0$, there exists $\delta>0$ such that \eqref{ineq:stableadfcond} implies that
    \begin{equation}
        \label{ineq:stableadfest}
    \|f^{\frac p2}-c^{\frac p2}1_{E_1\times\cdots\times E_d}\|_2\le \varepsilon \|f\|_p^{\frac p2},
    \end{equation}
    for some constant $c>0$ and measurable sets $E_i\subset \Omega_i$.
\end{theorem}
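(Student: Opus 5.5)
The plan is to run a stable version of the argument of \cite{BT} in two steps: first show that $f$ is $L^2$-close (at the level of $f^{p/2}$) to a tensor product $F(x)=\prod_{i=1}^d h_i(x_i)$, and then show that each $h_i$ is close to a multiple of an indicator. Throughout we normalise $\|f\|_p=1$.

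\emph{Step 1 (reduction to a tensor product).} I would recast the proof of \eqref{af} as a chain of three inequalities, each with a stable form available. Set $g_j:=((P_j)_*f)^{\theta_j(p_j-1)}$ and $G:=\prod_j g_j\circ P_j$.
\begin{enumerate}[(i)]
\item Hölder's inequality applied to $f^p=(fG)^p G^{-p}$ gives $\int f^p\le (\int fG)^p(\int G^{-p/(1-p)})^{1-p}$.
\item Hölder with weights $\theta_j$ gives $\int fG\le\prod_j\big(\int (P_j)_*f\, g_j^{1/\theta_j}\big)^{\theta_j}=\prod_j\|(P_j)_*f\|_{p_j}^{\theta_j p_j}$.
\item Finner's inequality \eqref{f} gives $\int G^{-p/(1-p)}\le\prod_j\|g_j^{-p/(1-p)}\|_{q_j}$.
\end{enumerate}
The compatibility condition \eqref{relation:thetajpj} should make the exponents close up, so that (iii) also produces powers of $\|(P_j)_*f\|_{p_j}$, and the chain reduces to \eqref{af}. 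Under \eqref{ineq:stableadfcond}, each of (i)--(iii) is then nearly an equality, with a loss of size $O(\delta)$.

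Applying Theorem \ref{thm:aldaz} to (i) gives $\|f^{p/2}-\lambda G^{-p/(2(1-p))}\|_2\lesssim\delta^{1/2}$ for a suitable constant $\lambda$. Applying Theorem \ref{thm:stablefinner} to (iii) gives $g_j^{-pq_j/(2(1-p))}\approx H_j^{1/2}$ in $L^2$ for each $j$, with $H_j$ a product of the $h_i$. I would then multiply these approximations together, exactly as in the proof of \eqref{Fstable2}: use the exceptional-set decomposition $A$, $A^c$ and the pointwise bound \eqref{ineq:AMGMineq}. By \eqref{finnerrelation}, $\prod_j H_j^{1/q_j}$ is itself a tensor product $F=\prod_i h_i$, and the outcome is $\|f^{p/2}-F^{p/2}\|_2\lesssim\delta^{c}$ for some $c>0$.

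\emph{Step 2 (tensor product factors are near indicators).} For a genuine tensor product $F$, both sides of \eqref{af} factorise over $i$. Each factor is a one-variable inequality comparing $\|h_i\|_p$ with a product of powers of $\|h_i\|_1$ and $\|h_i\|_{r_i}$, where $r_i$ is determined by the $p_j$ and $\theta_j$ for $j\in M_i$. This is log-convexity of $s\mapsto\|h\|_s$ in $1/s$, that is, Hölder applied to $h^p\cdot h^0$ on $\supp h$. Equality holds only for multiples of indicators, and Theorem \ref{thm:aldaz} makes this stable: $h_i^{p/2}$ is $L^2$-close to $c_i 1_{E_i}$. Taking products of these approximations, with the same product trick as in Step 1, then yields \eqref{ineq:stableadfest}.

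\emph{Main obstacle.} The difficulty is transferring near-extremality from $f$ to $F$. As the preceding Remark shows, the marginals $(P_j)_*$ are unstable under $L^p$ perturbations, so $L^2$-closeness of $f^{p/2}$ to $F^{p/2}$ does not by itself give closeness of the right-hand sides of \eqref{af} for $f$ and $F$. My first attempt would use Lemma \ref{lem:f-g}: it gives a set $A$ on which $F(1-\varepsilon)\le f\le F(1+\varepsilon)$ pointwise, while the contributions of $f$ and $F$ off $A$ are $O(\varepsilon^N)$. I would replace $F$ by $F1_A$, or by a product restriction of it, and compare marginals only where this pointwise control holds. Alternatively, I would avoid comparing marginals of $F$ altogether. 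Instead I would read off the near-equality in the one-variable Hölder steps directly from (ii), which already couples $(P_j)_*f$ to $g_j$, and hence to $H_j$. Losing a power of $\delta$ at this stage is acceptable, since Theorem \ref{thm:adFstable} is only qualitative.
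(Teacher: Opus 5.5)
Your Step 1 is the paper's Step 1. By \eqref{relation:thetajpj} one has $g_j^{-p/(1-p)}=((P_j)_*f)^{p_j/q_j}$, so your chain (i)--(iii) is exactly the paper's. One small correction: Theorem \ref{thm:aldaz} applied to (i) controls $(fG)^{1/2}-\lambda G^{-p/(2(1-p))}$, not $f^{p/2}-\lambda G^{-p/(2(1-p))}$. The paper passes to $f^{p/2}$ with a further H\"older/Finner step and the inequality $|A-B|\le|A^r-B^r|^{1/r}$, at the cost of replacing $\delta^{1/2}$ by $\delta^{p/2}$.

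The genuine gap is Step 2, which is the substance of the theorem and which you leave open. Your plan needs the tensor product $F$ to nearly extremise \eqref{af}. But Step 1 controls $F$ only through $\|F\|_p$, that is, only $\int h_i^p\,\mathrm{d}\mu_i$. By contrast, $\|(P_j)_*F\|_{p_j}$ involves $\int h_i\,\mathrm{d}\mu_i$ for $i\notin S_j$ and $\int h_i^{p_j}\,\mathrm{d}\mu_i$ for $i\in S_j$, and Step 1 controls neither. So there is no bound $\|(P_j)_*F\|_{p_j}\lesssim\|(P_j)_*f\|_{p_j}$, and nothing prevents the marginals of $F$ from being infinite.

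Your patches do not close this gap.
\begin{enumerate}
\item Lemma \ref{lem:f-g} does make $F1_A$ a near-extremiser, since $F1_A\lesssim f$ pointwise. But $F1_A$ is not a tensor product, so the factorisation into one-variable interpolation inequalities is lost. Nothing supplies a product set inside $A$ carrying most of the mass.
\item Likewise, (ii) splits into one-variable H\"older inequalities only when $f$ is itself a tensor product. So ``reading off'' one-variable near-equalities from (ii) again requires replacing $f$ and its marginals by $F$ and its marginals, which is the same obstruction.
\end{enumerate}

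The missing idea is to avoid marginals of the approximant altogether, and instead compare different marginals of $f$ with each other.
\begin{enumerate}
\item Theorem \ref{thm:aldaz} applied to (ii) says that the functions $\big(f\,((P_j)_*f\circ P_j)^{p_j-1}\big)^{1/2}$ are nearly proportional in $L^2$ for different $j$; this is \eqref{ineq:gjgellFinner}.
\item Via Lemma \ref{lem:f-g}, this becomes pointwise near-proportionality of $((P_j)_*f\circ P_j)^{p_j-1}$ and $((P_\ell)_*f\circ P_\ell)^{p_\ell-1}$ on a set $A$ off which $f$ has small $L^p$ mass. The Finner-stability output likewise becomes $((P_j)_*f)^{p_j}=(1+O(\varepsilon))\,c_j\prod_{i\in S_j}h_i$ on sets $A_j$.
\item On $\tilde A=A\cap\bigcap_jP_j^{-1}(A_j)$, which carries most of the mass of $H=\prod_i h_i$, this gives $\prod_{i\in S_{j_1}}h_i=(1+O(\varepsilon))\,b\,\big(\prod_{i\in S_{j_2}}h_i\big)^{\kappa}$ with $\kappa=\frac{p_{j_1}(1-p_{j_2})}{p_{j_2}(1-p_{j_1})}$. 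The paper records this with $\kappa=1$, which is exact when $p_{j_1}=p_{j_2}$; the value of $\kappa$ plays no role below.
\item Take $j_1$ with $d\in S_{j_1}$ and $j_2$ with $d\notin S_{j_2}$, which \eqref{finnerrelation} and \eqref{assumptionM} permit. Then on $\tilde A$, $h_d(x_d)$ agrees up to a factor $1+O(\varepsilon)$ with a function of $(x_1,\dots,x_{d-1})$ alone.
\item Freezing a typical $v\in\Omega_{I_{d-1}}$ gives $\|h_d^{1/2}-c_d1_{E_d}\|_2^2\le\varepsilon\|h_d\|_1$, where $E_d$ is the slice of $\tilde A$ over $v$ (Lemma \ref{lem:prodchar1}).
\end{enumerate}
This captures exactly the rigidity you were aiming for. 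In the exact case, proportionality of $((P_{j_1})_*f)^{p_{j_1}-1}$ and $((P_{j_2})_*f)^{p_{j_2}-1}$ on $\supp f=\prod_i\supp h_i$ forces each $h_i$ to be constant on its support. The argument obtains this without ever applying \eqref{af} to a tensor product.
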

\begin{proof} Our proof is guided by the argument in \cite[Theorem 2.1]{BT}, drawing on the stability Theorems \ref{thm:aldaz}, \ref{thm:stablefinner}, along with Lemma \ref{lem:f-g}.
    
    Set $\theta_*=\min_j\theta_j$ and
        \[
    g_j=((P_{j})_*f)^{\frac{p_j}{q_j}}.
    \]
    We begin by writing
    \[
    \begin{aligned}
    \|f\|_p&=\Big(\int \prod_{j=1}^m(f^{\theta_jp}(x)g_j^{p-1}(P_{j}x))\prod_{j=1}^mg_j^{1-p}(P_{j}x)\mathrm{d}\mu(x)\Big)^{\frac 1p}\\
    &\le \Big(\int \prod_{j=1}^mf^{\theta_j}g_j^{\frac{p-1}{p}}(P_{j}x)\mathrm{d}\mu(x)\Big)\Big(\int \prod_{j=1}^mg_j(P_{j}x)\mathrm{d}\mu(x)\Big)^{\frac{1-p}p}\\
    &\le \prod_{j=1}^{m}\|(P_{j})_*f\|_{L^{p_j}(\Omega_{S_j})}^{\theta_j};
    \end{aligned}
    \]
        here the first inequality follows from H\"older’s inequality, and for the second we apply H\"older’s inequality to the first factor and Finner's inequality to the second, noting that
        \[
        \|f^{\theta_j}g_j^{\frac{p-1}p}\|_{\theta_j^{-1}}=\|(P_j)_*f\|_{p_j}^{p_j\theta_j}, \quad  \|g_j\|_{q_j}=\|(P_j)_*f\|_{p_j}^{\frac {p_j}{q_j}},
        \]
and
    \[
    p_j\theta_j+\frac{p_j}{q_j}\left(\frac 1p-1\right)=\theta_j.
    \]
    By \eqref{ineq:stableadfcond} it follows that 
      \begin{equation}
          \label{LWstablefromad}
            \Big(\int \prod_{j=1}^mg_j(P_{j}x)\mathrm{d}\mu(x)\Big)^{\frac{1-p}p}\ge (1-\delta)\prod_{j=1}^m\|(P_{j})_*f\|_{L^{p_j}(\Omega_{S_j})}^{\frac{p_j(1-p)}{q_jp}},
      \end{equation}
            \begin{equation}
            \nonumber
            \begin{aligned}
                \Big(\int \prod_{j=1}^m&(f^{\theta_jp}(x)g_j^{p-1}(P_{j}x))\prod_{j=1}^mg_j^{1-p}(P_{j}x)\mathrm{d}\mu(x)\Big)^{\frac 1p}\\
    &\ge (1-\delta)\Big(\int \prod_{j=1}^mf^{\theta_j}g_j^{\frac{p-1}{p}}(P_{j}x)\mathrm{d}\mu(x)\Big)\Big(\int \prod_{j=1}^mg_j(P_{j}x)\mathrm{d}\mu(x)\Big)^{\frac{1-p}p},
            \end{aligned}
            \end{equation}
            and 
    \begin{equation}
        \nonumber
\int \prod_{j=1}^mf^{\theta_j}g_j^{\frac{p-1}{p}}(P_{j}x)\mathrm{d}\mu(x)\ge (1-\delta)\prod_{j=1}^{m}\|(P_{j})_*f\|_{L^{p_j}(\Omega_{S_j})}^{p_j\theta_j}.
    \end{equation}
     By the stability of the Finner and H\"older inequalities, Theorems \ref{thm:stablefinner} and \ref{thm:aldaz}, the functions in the above inequalities are close to their respective extremisers.    
    More precisely, for $\varepsilon>0$, there exists $\delta>0$ such that if \eqref{ineq:stableadfcond} holds, then there exist positive constants $c_f,c_j,c_{j,\ell}$ and nonnegative functions $x_i\mapsto h_i(x_i)$ satisfying
        \begin{equation}
    \label{ineq:Fstable1}
                \Big\|g_j^{\frac{q_j}{2}}-c_j^{\frac 12}\prod_{i\in S_j}h_i^{\frac 12}\Big\|_{2}\le  \varepsilon^{2} \|g_j\|^{\frac{q_j}{2}}_{q_j}=\varepsilon^{2} \|(P_{j})_*f\|_{L^{p_j}(\Omega_{S_j})}^{\frac{p_j}{2}},
    \end{equation}
    \begin{equation}
        \label{ineq:fgjstableFinner}
    \begin{aligned}
    \Big\|f^{\frac 12}\prod_{j=1}^m(g_j\circ P_j)^{\frac{p-1}{2p}} -c_f\prod_{j=1}^m(g_j\circ P_j)^{\frac{1}{2}}\Big\|_2&\le  \varepsilon^{\frac 1p} \Big\|f^{\frac 12}\prod_{j=1}^m(g_j\circ P_j)^{\frac{p-1}{2p}}\Big\|_2\\
    &\le  \varepsilon^{\frac 1p} \prod_{j=1}^m\|(P_{j})_*f\|_{L^{p_j}(\Omega_{S_j})}^{\frac {p_j\theta_j}{2}},    
    \end{aligned}
    \end{equation}
    and for all $j\neq\ell$,
    \begin{equation}
        \label{ineq:gjgellFinner}
    \|f^{\frac 12}(g_j\circ P_j)^{\frac{p-1}{2\theta_jp}}-f^{\frac 12}(c_{j,\ell}g_\ell\circ P_\ell)^{\frac{p-1}{2\theta_\ell p}}\|_2\le \varepsilon^{\frac{\theta_*^{-1}}p+1} \|(P_j)_*f\|_{L^{p_j}(\Omega_{S_j})}^{\frac {p_j}2};
    \end{equation}
the relevance of the particular powers of $\varepsilon$ chosen here will become clear shortly. Here, in a slight abuse of notation, $\prod_{i\in S_j}h_i$ is to be interpreted as the tensor product function $\Omega_{S_j}\ni P_jx\mapsto\prod_{i\in S_j}h_i(x_i)$.

Recalling the discussion at the beginning of this section, Step 1 in our argument is to show that
\begin{equation}
    \label{ineq:fHfinner}
\|f^{\frac p2}-H^{\frac 12}\|_2\lesssim  \varepsilon \|f\|_p^{\frac p2}
\end{equation}
%\footnote{JB: Wouldn't it be more natural to write $\|f^{p/2}-H^{1/2}\|_2$ here, and take the square roots of the definition of $H$ below?;I think so too! Thanks.} 
where
\[            H(x)=c_f^{2p}\prod_{j=1}^mc_j^{\frac 1{q_j}}\prod_{i=1}^dh_i(x_i).\]
The estimate \eqref{ineq:fHfinner} is natural to expect, in the sense that \eqref{ineq:Fstable1} and \eqref{ineq:fgjstableFinner} indicate that $f^{p/2}$ is close to $c_f^{p}\prod_j (g_j\circ P_j)^{1/2}$, while each $g_j^{1/2}$ is close to $(c_j\prod_{i\in S_j}h_i)^{1/2q_j}$.
%\footnote{JB: A very pedantic point probably, but are we clear enough about what we mean by $\prod_{i\in S_j}h_i$? This has a different interpretation from $\prod (g_\ell\circ P_\ell)$, for example, being really a tensor product. Such expressions come up again several times.; I add a comment about this below \eqref{ineq:gjgellFinner}. Please check... JB: I have changed it a bit - please check.; It looks much better. Thanks!}
    Indeed, by the reverse H\"older and Finner inequalities,
    \[
    \begin{aligned}
        \Big\|f^{\frac 12}\prod_{j=1}^m(g_j\circ P_j)^{\frac{p-1}{2p}} -c_f\prod_{j=1}^m(g_j\circ P_j)^{\frac{1}{2}}\Big\|_2^2&\ge\Big\|\Big(f^{\frac 12}-c_f\prod_{j=1}^m(g_j\circ P_j)^{\frac{1}{2p}}\Big)^{2}\Big\|_{p}\Big\|\prod_{j=1}^m(g_j\circ P_j)^{\frac{p-1}{p}}\Big\|_{\frac p{p-1}}\\
        &\ge\Big\|\Big(f^{\frac 12}-c_f\prod_{j=1}^m(g_j\circ P_j)^{\frac{1}{2p}}\Big)^{2}\Big\|_{p}\prod_{j=1}^m\|(P_{j})_*f\|_{L^{p_j}(\Omega_{S_j})}^{\theta_j(p_j-1)};
    \end{aligned}
            \]
            here we have used \eqref{relation:thetajpj} and the fact that $p-1<0$. Combining \eqref{ineq:stableadfcond} and \eqref{ineq:fgjstableFinner}, we obtain
            \[
            \Big\|\Big(f^{\frac 12}-c_f\prod_{j=1}^m(g_j\circ P_j)^{\frac{1}{2p}}\Big)^{2}\Big\|_{p}\le  \varepsilon^{\frac 2p} \prod_{j=1}^m\|(P_{j})_*f\|_{L^{p_j}(\Omega_{S_j})}^{\theta_j}\lesssim \varepsilon^{\frac 2p} \|f\|_p,
            \]
            and using the elementary inequality  $|A-B|\le |A^r-B^r|^{1/r}$, for $r\ge 1$ and $A, B>0$, this implies that
    \[
    \Big\|f^{\frac p2}-c_f^p\prod_{j=1}^m(g_j\circ P_j)^{\frac 1{2}}\Big\|_2\lesssim    \varepsilon\|f\|_p^{\frac p2}.
    \]
    Moreover, combining this with \eqref{LWstablefromad}, we also have that 
            \begin{equation}
                \nonumber
                \|f\|_p^{\frac p2} \sim 
            c_f^p \Big\|\prod_{j=1}^m(g_j\circ P_j)^{\frac {1}{2}}\Big\|_2\sim c_f^p \prod_{j=1}^m\|(P_{j})_*f\|_{L^{p_j}(\Omega_{S_j})}^{ \frac {p_j}{2q_j}}.
            \end{equation}
            On the other hand, by the same argument as in the proof of \eqref{Fstable2}, \eqref{ineq:Fstable1} yields that
            \begin{equation}
                \nonumber
            \begin{aligned}
                \Big\|\prod_{j=1}^m(g_j\circ P_j)^{\frac{1}{2}}-\prod_{j=1}^m(H_j\circ P_j)^{\frac{1}{2q_j}}\Big\|_2\lesssim   \varepsilon^2\prod_{j=1}^m\|(P_{j})_*f\|_{L^{p_j}(\Omega_{S_j})}^{\frac {p_j}{2q_j}},
            \end{aligned}
            \end{equation}
            where $H_j(P_jx)=c_j\prod_{i\in  S_j}h_i(x_i)$; note that $
            H=c_f^{2p}\prod_{j=1}^m(H_j\circ P_j)^{{1}/{q_j}}$.
            Combining the above three inequalities, we conclude \eqref{ineq:fHfinner}.

    We now turn to the second step, where Lemma \ref{lem:f-g} will be useful. In order to prove \eqref{ineq:stableadfest}, it suffices to show that 
    \begin{equation}
        \label{ineq:H-c1finner}
    \|H^{\frac 12}-c1_{E_1\times\cdots\times E_d}\|_2\lesssim \varepsilon^{\frac 12} \|H^{\frac 12}\|_2
    \end{equation}
    for some constant $c>0$ and measurable sets $E_i\subset \Omega_i$. Indeed, after replacing $\varepsilon$ by $(\varepsilon/C)^{2}$ for suitable constant $C>1$, \eqref{ineq:stableadfest} follows from \eqref{ineq:fHfinner} and \eqref{ineq:H-c1finner}.  

Applying Lemma \ref{lem:f-g} to \eqref{ineq:gjgellFinner}, there exist measurable sets $A_{j,\ell}\subset \Omega$ such that $f>0$ on $A_{j,\ell}$,
\begin{align}
    \label{ineq:lemma2.22.6}
\|f^{\frac 12}(g_j\circ P_j)^{\frac{p-1}{2\theta_jp}}1_{A_{j,\ell}^c}\|_2\lesssim \varepsilon ^{\frac{\theta_*^{-1}}{p}}\|(P_j)_*f\|_{p_j}^{\frac {p_j}2},
\end{align}
and 
\[
f^{\frac 12}(g_j\circ P_j)^{\frac{p-1}{2\theta_jp}}=(1+O(\varepsilon))f^{\frac 12}(c_{j,\ell}g_\ell\circ P_\ell)^{\frac{p-1}{2\theta_\ell p}} \quad on \quad A_{j,\ell}.
\]
Set 
\[
 A:=\bigcap_{j\neq \ell}A_{j,\ell}.
\]
Since $f>0$ on $A$, we have 
 \begin{equation}
     \nonumber
   g_j(P_jx)^{\frac 1{\theta_j}}=(1+O(\varepsilon))(c_{j,\ell}g_\ell(P_\ell x))^{\frac 1 {\theta_\ell}}
 \end{equation}
    for $x\in  A$  and $\ell\neq j$. By \eqref{relation:thetajpj}, this implies that
\begin{equation}
    \label{ineq:gjqjglql}
g_j(P_jx)^{q_j}=(1+O(\varepsilon))(c_{j,\ell}g_\ell(P_\ell x))^{q_\ell} \quad on \quad A.
\end{equation}
Recalling that $\sum_j\theta_j=1$, the estimate \eqref{ineq:lemma2.22.6}, combined with an application of H\"older's inequality, yields
\[
\begin{aligned}
    \Big\|f^{\frac 12}\prod_{j=1}^m(g_j\circ P_j)^{\frac{p-1}{2p}}1_{\cup_{j',\ell} A_{j',\ell}^c}\Big\|_2&\le\sum_{j', \ell}\prod_{j=1}^m\|f^{\frac 12}(g_j\circ P_j)^{\frac{p-1}{2\theta_jp}}1_{ A_{j',\ell}^c}\|_2^{\theta_j}\\
    &\lesssim \varepsilon^{\frac 1p}\prod_{j=1}^m\|(P_j)_*f\|_{p_j}^{\frac {p_j\theta_j}{2}};
\end{aligned}
\]
the gain coming from the $j=j'$ factor in the product for each $j'$.
%\footnote{JB: added}
On the other hand, by applying the reverse H\"older and Finner inequalities we have
\[    \Big\|f^{\frac 12}\prod_{j=1}^m(g_j\circ P_j)^{\frac{p-1}{2p}}1_{A^c}\Big\|_2^2\ge \|f1_{ A^c}\|_p\prod_{j=1}^m\|(P_j)_*f\|_{p_j}^{\theta_j(p_j-1)}.\]
Finally, combining the above two estimates with \eqref{ineq:stableadfcond}, we have
\begin{equation}
            \nonumber
            \|f1_{A^c}\|_p\lesssim \varepsilon^{\frac 2p}\prod_{j=1}^m\|(P_j)_*f\|_{p_j}^{\theta_j}\lesssim  \varepsilon^{\frac 2p}\|f\|_p.
\end{equation}
Together with \eqref{ineq:fHfinner}, this yields
\begin{equation}
            \label{ineq:ferrorfinner}
            \|f^{\frac p2}1_{A^c}\|_2+\|H^{\frac 12}1_{A^c}\|_{2}\lesssim \varepsilon \|H^{\frac 12}\|_2.
\end{equation}

Next, applying Lemma \ref{lem:f-g} to \eqref{ineq:Fstable1}, there exists a set $A_j\subset \Omega_{S_j}$ such that
    \begin{equation}
        \label{ineq:Herrorfinner}
    \|g_j^{\frac {q_j}2}\|_{L^2(\Omega_{S_j}\setminus  A_j)}+\Big\|c_j^{\frac 12}\prod_{i\in S_j}h_i^{\frac 12}\Big\|_{L^2(\Omega_{S_j}\setminus  A_j)}\lesssim  \varepsilon \|(P_j)_*f\|_{p_j}^{\frac {p_j}2}
    \end{equation}
    and 
    \begin{equation}
        \label{ineq:gjcjhisfinner}
    g_j^{\frac {q_j}2}=(1+O(\varepsilon))c_j^{\frac 12}\prod_{i\in S_j}h_i^{\frac 12} \quad \text{ on } A_j.
    \end{equation}
    Now set
    \[
    \tilde A:= A\cap\bigcap_{j=1}^m P_j^{-1}( A_j).
    \]
    Combining \eqref{ineq:ferrorfinner}, Finner's inequality and \eqref{ineq:Herrorfinner}, it follows that 
    \begin{equation}
        \label{ineq:Herrorfinner2}
    \|H^{\frac 12}1_{\tilde A^c}\|_2\lesssim  \varepsilon\|H^{\frac 12}\|_2.
    \end{equation}
    Further, by \eqref{ineq:gjqjglql} and \eqref{ineq:gjcjhisfinner}, we have
    \begin{equation}
        \label{ineq:hl=hjfinner}
        \prod_{i\in S_j}h_i(x_i)=b_{j,\ell}(1+O(\varepsilon))\prod_{i\in S_\ell}h_{i}(x_{i}), \quad \text{for }x\in \tilde A, j\not=\ell,
    \end{equation}
    for some constants $b_{j,\ell}>0$. 
The required inequality \eqref{ineq:H-c1finner} is now reduced to Lemma \ref{lem:prodchar1} below.
\end{proof}
\begin{lemma}\label{lem:prodchar1}
    Let $d\ge2$, $H(x)=\prod_{i=1}^dh_i(x_i)$, and $S_j$ satisfy \eqref{finnerrelation} and \eqref{assumptionM}. Suppose that $\tilde A\subset \Omega$ satisfies \eqref{ineq:Herrorfinner2} and \eqref{ineq:hl=hjfinner}. Then there exist  sets $E_i\subset \Omega_i$ and constants $c_i>0$ such that $\|h_i^{1/2}-c_i1_{E_i}\|_2\lesssim \varepsilon^{1/2}\|h_i\|_1^{1/2}$.
\end{lemma}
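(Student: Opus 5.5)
We begin by fixing a coordinate $i\in\{1,\hdots,d\}$. By \eqref{assumptionM} and $d\ge2$, there are indices $j,\ell$ with $j\in M_i$ and $\ell\notin M_i$. This is the key step: if instead $M_i$ contained every $j$, then every $S_j$ would contain $i$, and we could fall back on the case of some other $i'$ with $M_{i'}\neq M_i$. If $M_{i'}\subsetneq M_i$, we take $j\in M_i\setminus M_{i'}$ and $\ell\in M_{i'}$, so that $i'\in S_\ell$ but $i'\notin S_j$. We will need a pair $(j,\ell)$ such that $i\in S_j\setminus S_\ell$, and we check such a pair exists. If no $j,\ell$ separate $i$, then every $S_j$ either contains $i$ or fails to; since the sets $M_{i'}$ are distinct and $d\ge 2$, for some ordering of the coordinates we can find a set $S_j$ containing $i$ and a set $S_\ell$ not containing it, unless $M_i=\{1,\hdots,m\}$. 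In that case we handle $h_i$ by symmetry, using that $H$ is a product and that \eqref{finnerrelation} forces $\sum_j q_j^{-1}=1$.

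With $i\in S_j\setminus S_\ell$, we divide \eqref{ineq:hl=hjfinner} to get, on $\tilde A$,
\[
h_i(x_i)\prod_{i'\in S_j\setminus(S_\ell\cup\{i\})}h_{i'}(x_{i'})=(1+O(\varepsilon))\,b_{j,\ell}\prod_{i'\in S_\ell\setminus S_j}h_{i'}(x_{i'}).
\]
We then freeze all variables other than $x_i$. The right-hand side does not depend on $x_i$. Hence, on each fibre of $\tilde A$ in the $x_i$ direction, $h_i(x_i)$ is equal to a constant up to a factor $1+O(\varepsilon)$, and it is nonzero there because $f>0$ on $A$ and $H$ approximates $f^p$. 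The goal is to show that there is a single constant $c_i^2$ such that $h_i\in[(1-C\varepsilon)c_i^2,(1+C\varepsilon)c_i^2]$ on a set $E_i$ that captures most of the mass of $h_i$, measured in the weight that $H$ induces.

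To do this, we integrate. By \eqref{ineq:Herrorfinner2} and Fubini, for the product measure $\nu=H\,d\mu$ normalised to be a probability measure (a product measure $\bigotimes_{i'}\tilde h_{i'}\,d\mu_{i'}$), we have $\nu(\tilde A^c)\lesssim\varepsilon^2$. Pick two independent samples $x,x'$ from $\nu$ that differ only in the coordinate $x_i$. This can be arranged by coupling the other coordinates, since $\nu$ is a product. A union bound shows that with probability $1-O(\varepsilon^2)$ both samples lie in $\tilde A$. On that event, $h_i(x_i)/h_i(x_i')=1+O(\varepsilon)$. So under $\tilde h_i\,d\mu_i\otimes\tilde h_i\,d\mu_i$, the ratio $h_i(s)/h_i(t)$ lies in $[1-C\varepsilon,1+C\varepsilon]$ outside a set of measure $O(\varepsilon^2)$. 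By Fubini, there is a $t_0$ such that, with $c_i^2:=h_i(t_0)$, the set $E_i=\{s: |h_i(s)/c_i^2-1|\le C\varepsilon\}$ has $\tilde h_i\,d\mu_i$-measure $1-O(\varepsilon^2)$.

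It remains to estimate $\|h_i^{1/2}-c_i1_{E_i}\|_2^2$. On $E_i$ the integrand is $O(\varepsilon^2)h_i$. Off $E_i$ it is $h_i$, whose integral is $O(\varepsilon^2)\|h_i\|_1$. This gives a bound of order $\varepsilon$, which is better than the required $\varepsilon^{1/2}$. The $\varepsilon^{1/2}$ leaves room for the looser bound one gets if the measure of the bad set is only controlled by $\varepsilon$ rather than $\varepsilon^2$.

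The main obstacle is the combinatorial step of producing, for every coordinate $i$, a pair $(j,\ell)$ with $i\in S_j\setminus S_\ell$ from \eqref{assumptionM}. When $i$ lies in every $S_j$, \eqref{ineq:hl=hjfinner} says nothing about $h_i$. In that case I expect that $h_i$ is only constrained by the overall near-extremality, and I would try to use the full structure \eqref{ineq:gjqjglql}, in particular the $M_0$-type indices, to recover the needed constraint.
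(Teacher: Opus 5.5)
\textbf{The gap.} The combinatorial step is not established. For each $i$ you need a pair $(j,\ell)$ with $i\in S_j\setminus S_\ell$, and you leave this as the open ``main obstacle''.

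Your attempted derivation does not produce such a pair. In the case $M_{i'}\subsetneq M_i$, the pair you build separates $i'$, not $i$; since $\ell\in M_{i'}\subseteq M_i$, you even have $i\in S_\ell$.

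For the case $M_i=\{1,\dots,m\}$ you suggest invoking \eqref{ineq:gjqjglql} and ``$M_0$-type indices''. These are not hypotheses of the lemma, so this cannot work. Moreover, if that case could occur, $h_i(x_i)$ would cancel from both sides of every instance of \eqref{ineq:hl=hjfinner}. Then nothing in the lemma's hypotheses could control $h_i$.

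The missing observation is that this case is vacuous, and you already had the ingredient in hand. If $M_i=\{1,\dots,m\}$, then \eqref{finnerrelation} at $i$ gives $\sum_{j=1}^m q_j^{-1}=1$. Take any $i'\neq i$, which exists since $d\ge2$. Then \eqref{finnerrelation} at $i'$ gives $\sum_{j\in M_{i'}}q_j^{-1}=1=\sum_{j=1}^m q_j^{-1}$. Since every $q_j^{-1}>0$, this forces $M_{i'}=\{1,\dots,m\}=M_i$, contradicting \eqref{assumptionM}. The same positivity argument shows no $M_{i'}$ can be a proper subset of another $M_i$, so the case $M_{i'}\subsetneq M_i$ you considered never arises either. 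This is exactly how the paper obtains $j_1,j_2$ with $d\in S_{j_1}$ and $d\notin S_{j_2}$.

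\textbf{The rest of your argument.} With that fixed, your proof is correct, and its analytic step differs from the paper's.

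The paper first shows that $H^{1/2}$ is $L^2$-close to $cH_{j_1,j_2}^{1/2}1_{\tilde A}$, where $H_{j_1,j_2}$ does not depend on $x_d$. It then applies Markov's inequality in $(x_1,\dots,x_{d-1})$ to select a single good point $v$. It takes $E_d$ to be the fibre of $\tilde A$ over $v$ and reads off $c_d$ from the other $h_i$ evaluated at $v$. The Markov threshold costs a square root, which is where the stated $\varepsilon^{1/2}$ comes from.

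Your two-sample argument works differently. You take pairs $(y,s),(y,t)$ under the normalised product measure $H\,\mathrm{d}\mu/\|H\|_1$, apply a union bound, and then use Fubini in $t$. This compares $h_i$ with itself, makes $E_i$ a level set of $h_i$, and gives the stronger bound $O(\varepsilon)\|h_i\|_1^{1/2}$. It also avoids dividing by factors that might vanish, because $H>0$ almost surely under that measure.

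For the same reason, your appeal to ``$f>0$ on $A$'' is not needed. That is fortunate, since it is not among the lemma's hypotheses.
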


\begin{proof} 
By symmetry it suffices to show that $h_d$ satisfies the desired property.
By \eqref{finnerrelation} and \eqref{assumptionM} there exist $j_1,j_2$ such that $d\in S_{j_1}$ and $d\notin S_{j_2}$. On the one hand $M_d$ is nonempty by \eqref{finnerrelation}, so the existence of such a $j_1$ is immediate. On the other hand, if no such $j_2$ were to exist then $M_d=\{1,\hdots, m\}$, meaning that $\sum_{j\in M_i}q_j^{-1}=1$ for all $i$, implying that $M_i=\{1,\hdots, m\}$ for all $i$, contradicting \eqref{assumptionM}. 

    By \eqref{ineq:Herrorfinner2} and \eqref{ineq:hl=hjfinner} there exists a constant $c>0$ such that
    \[
    \|H^{\frac 12}-c H_{j_1,j_2}^{\frac 12}1_{\tilde A}\|_2\lesssim \varepsilon \|H^{\frac 12}\|_2,
    \]
    where 
    \[
    \begin{aligned}
         H_{j_1,j_2}(x)&=\prod_{i\notin S_{j_1}}h_i(x_i) \prod_{i\in S_{j_2}}h_i(x_i)\\
        &=\prod_{i\notin S_{j_1}\setminus S_{j_2}}h_i(x_i) \prod_{i\in S_{j_2}\setminus S_{j_1}}h_i(x_i).        
    \end{aligned}
    \]
    For simplicity, set $S^1=S_{j_1}\setminus S_{j_2}$
    %\footnote{JB: minor abuse of notation: perhaps we could use $S^1$ and $S^2$?} 
    and $S^2=S_{j_2}\setminus S_{j_1}$.
    Then the above inequality becomes
    \[
    \begin{aligned}
        \int  \prod_{i=1}^{d-1}h_i\int \Big(h_{d}^{\frac 12        }-c\frac{\prod_{i\in S^2}h_i^{1/2}}{\prod_{i\in S^1\setminus \{d\}}h_i^{1/2}}1_{\tilde A}\Big)^2\mathrm{d}\mu_d \mathrm{d}\mu_{I_{d-1}}
        \lesssim \varepsilon^2\prod_{i=1}^d\|h_i\|_1.
    \end{aligned}
    \]
    Define sets $O,\tilde O\subseteq\Omega_{I_{d-1}}$ by  
    \[
    O:=\Big\{(x_1,\hdots, x_{d-1}):\int \Big(h_{d}^{\frac 12}(x_{d})-c\frac{\prod_{i\in S^2}h_i^{1/2}(x_i)}{\prod_{i\in S^1\setminus \{d\}}h_i^{1/2}(x_i)}1_{\tilde A}\Big)^2\mathrm{d}\mu_d(x_{d})\le  \varepsilon \|h_d\|_1\Big\}
    \]
    and $\tilde O:=O\cap \big\{(x_1,\hdots,x_{d-1}):\prod_{i=1}^{d-1}h_i(x_i)>0\big\}$.    Since $$\Big\|\prod_{i=1}^{d-1} h_i1_{\Omega_{I_{d-1}}\setminus \tilde O}\Big\|_1\lesssim \varepsilon \prod_{i=1}^{d-1}\|h_i\|_1$$ by definition, it follows that $\tilde O$ is nonempty.
    Finally we fix $v=(v_1,\hdots,v_{d-1})\in \tilde O$ and define
    \[
    c_d=c\frac{\prod_{i\in S^2}h_i^{1/2}(v_i)}{\prod_{i\in S^1\setminus \{d\}}h_i^{1/2}(v_i)}
    \]
    and
    \[
    E_d:=\left\{x_d\in \Omega_d: (v,x_d) \in \tilde A \right\}. 
    \]
    It remains to observe that 
    \[
    \|h_d^{\frac 12}-c_d1_{E_d}\|_2^2
\le  \varepsilon \|h_d\|_1,
    \]
   as required.
    %By repeating the same argument with replacing $h_1$ by $h_i$, we can choose sets $E_i$ and constants $c_i$ satisfying \[\|h_i-c_i1_{E_i}\|_2\lesssim  \varepsilon^{1/2}\|h_i\|_2,\] for all $i=1,\hdots,d$.
\end{proof}

We now turn to the proof of Theorem \ref{thm:adfstable2}.

\begin{proof}[Proof of Theorem \ref{thm:adfstable2}]
The estimate \eqref{notgoodenough} follows directly from Theorem \ref{thm:adFstable} by Remark \ref{Rem:convert}, and so it remains to establish \eqref{ineq:pjfpj1}.

    In the proof of Theorem \ref{thm:adFstable} we observed that for each $\varepsilon>0$ there exists $\delta>0$ such that \eqref{ineq:stableadfcond} implies \eqref{ineq:Fstable1} and $\|h_i^{1/2}-c_i'1_{E_i}\|_2\lesssim \varepsilon \|h_i\|_1^{1/2}$. By the triangle inequality, these estimates give
    \begin{equation}
        \label{ineq:L2distancePjf}
        \Big\|((P_j)_*f)^{\frac {p_j}{2}}-  c_j^{\frac 12}\prod_{i\in S_j}c_i'1_{E_i}\Big\|_{L^2(\Omega_{S_j})}\lesssim  \varepsilon \|(P_j)_*f\|_{L^{p_j}({\Omega_{S_j}})}^{\frac{p_j}2}.
    \end{equation}
    By Lemma \ref{lem:f-g} (see Remark \ref{Rem:convert}), it follows that
      \begin{equation}
        \label{ineq:pjfpj12}
    \|(P_j)_*f- \tilde c_j(P_j)_*(1_{E_1\times\cdots \times E_d})\|_{L^{p
    _j}(\Omega_{S_j})}\lesssim  \varepsilon^{\frac 12} \|(P_j)_*f\|_{L^{p_j}({\Omega_{S_j}})},
    \end{equation}
    for some constants $\tilde c_j>0$. Here we have also used the fact that $$(P_j)_*(1_{E_1\times\cdots \times E_d})(P_j x)=\prod_{i\notin S_j}\mu_i(E_i)\prod_{i\in S_j}1_{E_i}(x_i).$$
It remains to prove that there is a single constant $c$ for which
  \begin{equation}
        \label{ineq:pjfpj13}
    \|(P_j)_*f- c(P_j)_*(1_{E_1\times\cdots \times E_d})\|_{L^{p
    _j}(\Omega_{S_j})}\lesssim  \varepsilon^{\frac {\theta_*}2} \|(P_j)_*f\|_{L^{p_j}({\Omega_{S_j}})}
    \end{equation}
for all $j$, where $\theta_*=\min_j\theta_j$. Indeed, \eqref{ineq:pjfpj1} follows on replacing $\varepsilon$ by $(\varepsilon/C)^{2\theta_*^{-1}}$ for a suitable $C>1$.

By Theorem \ref{thm:adFstable}, there is a constant $c>0$ such that \eqref{ineq:stableadfest} holds. Note that the $L^2$ estimates \eqref{ineq:stableadfest} and \eqref{ineq:L2distancePjf} yield
\begin{equation}
    \label{ineq:flppjflp}
    \begin{aligned}
        \|f\|_p&=(1+O(\varepsilon ))\|c1_{E_1\times\cdots\times E_d}\|_p, \\
        \|(P_j)_*f\|_{p_j}&=(1+O(\varepsilon^{\frac 12}))\|\tilde c_j(P_j)_*(1_{E_1\times \cdots\times E_d})\|_{p_j},
    \end{aligned}
\end{equation}
for all $j$.
%After applying the change of variables $x\mapsto (|E_1|x_1,\hdots,|E_d|x_d)$, we may assume that $|E_i|=1$ for all $i$, and hence
%\footnote{JB: added. Is this what you mean here?; It was not, but I think this is very good point. I thought that reduction to (3.20) is automatic. So I did not explain the scaling for the reduction. The scaling is used very crucially on the proof of inequality below (3.23). But I think your comment is helpful to understand this reduction.. $c=(1+O(\varepsilon))\|f\|_p$ and $\tilde{c}_j=(1+O(\varepsilon^{1/2}))\|(P_j)_*f\|_{p_j}$. 
To prove \eqref{ineq:pjfpj13} it therefore suffices to show that
\begin{equation}
    \label{ineq:c/tildecj}
\frac{c}{\tilde c_j}=1+O(\varepsilon^{\frac{\theta_*}2})
\end{equation}
for all $j$.  

To prove \eqref{ineq:c/tildecj} we apply the adjoint Finner inequality to a nonnegative function that is close to $f-c1_{E_1\times\cdots\times E_d}$.
To do this we find a set $A$ which is close enough to the set $E_1\times\cdots\times E_d$ and such that $f-\tilde c1_A$ is nonnegative for a suitable constant $\tilde c$.
By Lemma \ref{lem:f-g} along with \eqref{ineq:stableadfest}, there exists a set $A\subseteq E_1\times \cdots\times E_d$ satisfying 
\begin{equation}
    \label{ineq:f=conA}
    \begin{aligned}
        (1-\varepsilon^{\frac 12}) c^{\frac p2}\le f^{\frac p 2}\le&(1+ \varepsilon^{\frac 12})c^{\frac p2} \text{  on } A,\\
        \|f1_{A^c}\|_p&\lesssim \varepsilon^{\frac 1 p}\|f\|_p,
    \end{aligned}
\end{equation}
and 
\begin{equation}
    \label{ineq:diffe-a}
\mu({E_1\times\cdots\times  E_d}\setminus A) \lesssim \varepsilon\mu(E_1\times\cdots\times E_d),
\end{equation}
which follows from $\|c^{ p/2}1_{E_1\times\cdots\times  E_d\setminus A}\|_2\lesssim \varepsilon^{1/2}\|c^{p/2}1_{E_1\times \cdots\times E_d}\|_2$. 
By taking
\[
\tilde c= c(1-\varepsilon^{\frac 12})^{\frac 2p},
\]
we have $f\ge \tilde c$ on $A$. Additionally, \eqref{ineq:f=conA} implies that
\begin{equation}
    \label{ineq:f-c1A}
    \begin{aligned}
        \|f-\tilde c1_A\|_p&\lesssim\|f1_{A^c}\|_p+\|(f-\tilde c)1_A\|_p\\
        &\lesssim \varepsilon^{\frac 1p}\|f\|_p+ \varepsilon^{\frac 12}c\|1_{E_1\times \cdots E_d}\|_p\\
        &\lesssim \varepsilon^{\frac 12}\|f\|_p.
    \end{aligned}
\end{equation}

We first treat the simpler case in which $\tilde c_j>\tilde c$ for all $j$. By the definition of $\tilde c$, this condition implies  that \begin{equation}\label{inter6}
\frac{c}{\tilde c_j}\le 1+O(\varepsilon^{1/2} ).
\end{equation}
On the other hand, since $f$ is a near extremiser of the adjoint Finner inequality, \eqref{ineq:flppjflp} gives that
\[
 c=(1+O(\varepsilon^{\frac 12}))\prod_{j=1}^m\tilde c_j^{\theta_j},
\]
which may be written as 
\[
1-C\varepsilon^{\frac 12}\le \prod_{j=1}^m\left(\frac c{\tilde c_j}\right)^{\theta_j}\le 1+C\varepsilon^{\frac 12}
\]
for some $C$; here we have used the fact that $\theta_1+\cdots+\theta_m=1$. By \eqref{inter6} we now also have $c/\tilde c_j\ge1-O(\varepsilon^{1/2})$, and  \eqref{ineq:c/tildecj} follows.

Turning to the remaining case, suppose now that 
$
\tilde c_j\le \tilde c
$
for some $j$, and let $j^*$ be such that $\tilde c_{j_*}=\min_j \tilde c_j$.
Arguing as in the previous case, in order to establish \eqref{ineq:c/tildecj} it is enough to show that
\begin{equation}\label{inter8}
\frac{c}{\tilde c_{j_*}}\le 1+O(\varepsilon^{\frac{\theta_*}{2}}).
\end{equation}
Now, since $\tilde c_{j_*}\le \tilde c$, the difference $f-\tilde c_{j_*}1_A$ is nonnegative, and so the adjoint Finner inequality (which requires nonnegativity) implies that
\begin{equation}
    \label{ineq:adfinnerrevised}
\|f-\tilde c_{j_*}1_A\|_p\le \prod_{j=1}^m\|(P_j)_*(f-\tilde c_{j_*}1_A)\|_{p_j}^{\theta_j}.
\end{equation}
To estimate the right hand side of this inequality, we observe that, by H\"older's inequality and \eqref{ineq:diffe-a},
\[
\begin{aligned}
\|(P_j)_*(1_{E_1\times\cdots\times E_d}-1_A)\|_{p_j}^{p_j}&=\int \Big|\int1_{E_1\times\cdots\times E_d\setminus A}\mathrm {d}\mu_{S_j^c}\Big|^{p_j}\prod_{i\in S_j}1_{E_i} \mathrm{d} \mu_{S_j}\\
&\le \mu(E_1\times\cdots\times E_d\setminus A)^{p_j}\prod_{i\in S_j}\mu_i(E_i)^{1-p_j} \\
&\lesssim \varepsilon^{p_j}\prod_{i'\notin S_j}\mu_{i'}(E_{i'})^{p_j}\prod_{i\in S_j}\mu_i(E_i)\\
&=\varepsilon^{p_j}\|(P_j)_*(1_{E_1\times\cdots\times E_d})\|_{p_j}^{p_j}.
%&=\varepsilon^{p_j}\|(P_j)_*(1_{E_1\times\cdots\times E_d})\|_{p_j}^{p_j}. 
\end{aligned}
\]
Using this along with \eqref{ineq:pjfpj12} it follows that
\[
\begin{aligned}
    \|(P_j)_*(f-\tilde c_{j_*}1_A)\|_{p_j}&\lesssim \|(P_j)_*(f-\tilde c_{j_*}1_{E_1\times\cdots\times E_d})\|_{p_j}+\|(P_j)_*(\tilde c_{j_*}1_{E_1\times\cdots\times E_d\setminus A})\|_{p_j} \\
    &\lesssim 
    \begin{cases}
    \varepsilon^{\frac 12}     \|(P_j)_*f\|_{p_j}, \quad &j=j_*, \\
    \|(P_j)_*f\|_{p_j}, \quad &j\neq j_*.
    \end{cases}
\end{aligned}
\]
Hence by \eqref{ineq:adfinnerrevised}  we have
\[
\|f-\tilde c_{j_*}1_A\|_p\lesssim \varepsilon^{\frac{\theta_{j_*}}2}\prod_{j=1}^m\|(P_j)_*f\|_{p_j}^{\theta_j}\lesssim \varepsilon^{\frac{\theta_{*}}2}\|f\|_p.
\]
Combining this with \eqref{ineq:f-c1A} it follows that $\|(\tilde c-\tilde c_{j_*})1_A\|_p\lesssim \varepsilon^{{\theta_*}/{2}}\|f\|_p\lesssim \varepsilon^{{\theta_*}/{2}}\|c1_{E_1\times\cdots E_d}\|_p$. Consequently, together with \eqref{ineq:diffe-a}, we obtain   
$\tilde c/\tilde c_{j_*}=1+O(\varepsilon^{\theta_*/2})$, which implies \eqref{inter8} by the definition of $\tilde c$.
\end{proof}

\section{The proofs of Theorems \ref{thm:BTlog} and \ref{thm:main}}\label{Sect:preliminaries}
In this section we establish Theorems \ref{thm:BTlog} and \ref{thm:main} via the adjoint Finner inequality and its stability (Theorem \ref{thm:adfstable2}). Here we will be working in the Euclidean setting with $\Omega_i=\mathbb{R}$ and $\mu_i$ being Lebesgue measure for each $i$. The product $\Omega_{S_j}$ is then a coordinate subspace of $\mathbb{R}^d$, and $\mu_{S_j}$ is its Lebesgue measure.

For $j=1,\hdots, d$, define sets of $d-k$ elements,
\[
S_j=\{i\in \{1,\hdots,d\}:(\{i\}+d\mathbb Z) \cap \{j,j+1,\hdots, j+k-1\}=\emptyset\},
\]
and set for $j=1,\hdots,d$, 
\[
q_j=d-k, \quad \theta_j=\frac 1d.
\]
Then we have the corresponding adjoint Finner inequality
\begin{equation}\label{alw}
\|f\|_{L^p(\mathbb R^d)}\le \prod_{j=1}^{d}\|(P_{j})_*f\|_{L^q(\Omega_{S_j})}^{\frac{1}{d}},
\end{equation}
where $p,q$ satisfy \eqref{scalingk} and $P_j$ is the orthogonal projection of $\mathbb{R}^d$ onto $\Omega_{S_j}$. 

Now, since the pushforward $(P_j)_*f:\Omega_{S_j}\rightarrow\mathbb{R}_+$ is given 
%is defined via duality by $$ \int_{H_j} (B_j)_*f(y)g(y)\mathrm{d}y=\int_H f(x)g(B_jx)\mathrm{d}x,$$ and 
by the marginal formula
$$
(P_j)_*f(y)=\int_{P_j^{-1}(y)}f,
$$
we may write
\[
(P_j)_*f(y)=T_{k,d}f(\pi_j,y),
\]
where $\pi_1,\hdots,\pi_d$ are the $k$-dimensional coordinate subspaces of $\mathbb{R}^d$ given by
\begin{equation}
    \label{def:pij}
\pi_j=\Omega_{S_j}^\perp=\langle e_j, e_{(j+1)\mod d},\hdots, e_{(j+k-1)\mod d}\rangle.
\end{equation}
Given \eqref{alw} it is now straightforward to establish Theorem \ref{thm:BTlog} by a minor adaptation of the proof of Theorem 8.7 in \cite{BT}.
\begin{proof}[Proof of Theorem \ref{thm:BTlog}]
Noting that 
$$\|(P_{j})_*f\|_{L^q(\pi_j^\perp)}=\|T_{k,d}f(\pi_j,\cdot)\|_{L^q(\pi_j^\perp)},$$
and taking logarithms, \eqref{alw} becomes
\begin{equation}\label{ablj}
\log\|f\|_{L^p(\mathbb R^d)}\le \frac{1}{d}\sum_{j=1}^{d}\log\|T_{k,d}f(\pi_j,\cdot)\|_{L^q(\pi_j^\perp)}.
\end{equation}
Replacing $f$ by $f\circ \rho$ in \eqref{ablj} for $\rho\in SO(d)$, and averaging, it follows that
\begin{equation}
    \label{abljrot}
\log\|f\|_{L^p(\mathbb R^d)}\le\int_{SO(d)}\frac 1d \sum_{j=1}^d \log\|T_{k,d}(f\circ \rho)(\pi_j,\cdot)\|_{L^q(\pi_j^\perp)}\mathrm{d}\rho,
\end{equation}
where $\mathrm{d}\rho$ is the uniform probability measure on $SO(d)$; here we have also used that $\|f\circ\rho\|_{L^p(\mathbb{R}^d)}=\|f\|_{L^p(\mathbb{R}^d)}$.
Finally we observe that
\begin{equation}
    \label{Tkdrho}
    \|T_{k,d}(f\circ\rho)(\pi,\cdot)\|_{L^q(\pi^\perp)}=\|T_{k,d}f(\rho^*\pi,\cdot)\|_{L^q((\rho^*\pi)^\perp)}
\end{equation}
for every $\pi\in\mathcal{G}_{k,d}$. The theorem now follows since 
$\mathrm{SO}(d)$ acts transitively on 
$\mathcal{G}_{k,d}$.
\end{proof}
Reflecting on the above argument, at least two strategies for improving the conclusion naturally present themselves. The first, and perhaps most evident, is suggested by the fact that \eqref{alw} (and thus \eqref{ablj}) cannot hold with equality for all rotations of $f$: the point being that extremisers to \eqref{alw} take the form $f=c\chi_{E_1\times\cdots\times E_d}$ for subsets $E_1,\hdots,E_d$ of the line and $c>0$ (see \cite[Theorem 2.1]{BT} and Theorem \ref{thm:adfstable2});
%{\color{red} here I'm just thinking about $k=1$, but there must be a similar statement for any $k$, no?}
 evidently this class is not rotation-invariant. The second strategy is to look to replace the Finner inequality with a different Brascamp--Lieb inequality that has adjoint constant less than $1$. In Section \ref{sec:gs}, we explore this second approach, and the role played by stability there.

In the remainder of this section we prove Theorem \ref{thm:main} using the first of these approaches. Roughly speaking, if $f$ is a near-extremiser for \eqref{Application of Finnerintro}, then it must be a near extremiser for \eqref{alw} for many rotations. However, Theorem \ref{thm:adfstable2} shows that such near extremisers are very far from being rotation-invariant, which leads to a contradiction. 

\subsection{Proof of Theorem \ref{thm:main}}
We begin by proving a quantitative statement showing that a near extremiser of \eqref{alw} is far from rotation-invariant. Since a rotation-invariant function $f$ satisfies $f=f\circ \rho$ for all rotations $\rho$, one may test for rotation-invariance by estimating the distance between $f$ and $f\circ \rho$. In view of the observation that a near extremiser of \eqref{alw} satisfies \eqref{ineq:stableadfest}, it reduces to establishing an estimate of the form
    \begin{equation}
        \label{ineq:P1P2}
    |\mathcal P_1\cap \rho\mathcal P_2|\le C\max\{|\mathcal P_1|,|\mathcal P_2|\},
    \end{equation}
    where $\mathcal P_1,\mathcal P_2$ are cartesian product sets in $\mathbb R^d$. We first verify that \eqref{ineq:P1P2} holds with $C<1$ for suitable rotations $\rho_{\pm}$, defined by
     \[
    \rho_{\pm}e_i=\begin{cases}
        \frac{\sqrt 2}{2}(e_1\pm e_d), &i=1, \\
        e_i, &i=2,\hdots,d-1, \\
        \frac{\sqrt 2}{2}(\mp e_1+e_d), &i=d.
    \end{cases}
    \]

\begin{lemma}\label{lem:P1P2}
 Suppose that $\rho=\rho_{\pm}$. Then 
 \begin{equation}
     \label{ineq:P1P2sharp}
     |\mathcal P_1\cap \rho\mathcal P_2|\le 2(\sqrt 2-1)|\mathcal P_1|^{\frac 12}|\mathcal P_2|^{\frac 12}
 \end{equation}
holds for all cartesian product sets $\mathcal P_1,\mathcal P_2$.
\end{lemma}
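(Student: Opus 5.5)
The plan is to split off the coordinates that $\rho$ fixes, use the Brascamp--Lieb--Luttinger rearrangement inequality to pass to centred intervals, and then do an explicit planar computation.

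\textbf{Reduction to the plane.} Write $\mathcal P_1=A_1\times\cdots\times A_d$ and $\mathcal P_2=B_1\times\cdots\times B_d$. The rotation $\rho=\rho_\pm$ fixes $e_2,\hdots,e_{d-1}$ and acts as a rotation by $\pm\pi/4$ in the $(x_1,x_d)$-plane. Hence the intersection factorises:
\[
|\mathcal P_1\cap\rho\mathcal P_2|=\prod_{i=2}^{d-1}|A_i\cap B_i|\cdot\big|(A_1\times A_d)\cap R_{\pm}(B_1\times B_d)\big|,
\]
where $R_\pm$ denotes the planar rotation. The middle factors satisfy $|A_i\cap B_i|\le|A_i|^{1/2}|B_i|^{1/2}$. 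So it suffices to prove the case $d=2$, namely
\[
|(A_1\times A_2)\cap R_\pm(B_1\times B_2)|\le 2(\sqrt2-1)\,(|A_1||A_2||B_1||B_2|)^{1/2}.
\]

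\textbf{Symmetrisation.} Write the left-hand side as
\[
\int_{\mathbb R^2}1_{A_1}(x)\,1_{A_2}(y)\,1_{B_1}\big(\tfrac{x\pm y}{\sqrt2}\big)\,1_{B_2}\big(\tfrac{\mp x+y}{\sqrt2}\big)\,\mathrm dx\,\mathrm dy.
\]
This is an integral of a product of nonnegative functions of linear forms on $\mathbb R^2$, each function acting on $\mathbb R$. The Brascamp--Lieb--Luttinger rearrangement inequality therefore shows that the integral does not decrease when each set is replaced by its symmetric decreasing rearrangement, a centred interval of the same length. The right-hand side is unchanged by this replacement. We may thus assume the sets are centred intervals of half-lengths $a,b,c,e$, i.e.\ $A_1=[-a,a]$, $A_2=[-b,b]$, $B_1=[-c,c]$, $B_2=[-e,e]$. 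The problem becomes bounding the area of the intersection of an axis-parallel centred rectangle $Q_1$ with a $45^\circ$-rotated centred rectangle $Q_2$ by $2(\sqrt2-1)\cdot 4\sqrt{abce}$. Swapping the roles of $Q_1$ and $Q_2$ and the symmetries of the configuration allow some normalisation, and by scaling we may set $ab=1$.

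\textbf{The extremal computation (main obstacle).} The equality case is two squares of equal area: their intersection is a regular octagon of inradius $s/2$, of area $8(s/2)^2\tan(\pi/8)=2(\sqrt2-1)s^2$. This is exactly the stated constant.

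To show no other choice of $(a,b,c,e)$ does better, I would argue as follows. The intersection $Q_1\cap Q_2$ is a centrally symmetric convex polygon, and its area is the area of $Q_1$ minus the triangular or trapezoidal corners of $Q_1$ cut off by the strips defining $Q_2$. This gives the area as an explicit piecewise-quadratic function of $(a,b,c,e)$, with cases according to which lines meet which edges.

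First I would dispose of the degenerate regimes using the trivial bound $|Q_1\cap Q_2|\le\min(|Q_1|,|Q_2|)$. For instance, if $|Q_1|/|Q_2|$ is far from $1$, then the minimum is already below $2(\sqrt2-1)\sqrt{|Q_1||Q_2|}$. Similarly, if one rectangle is very elongated, the intersection is bounded by a strip of width $2\min(c,e)$ crossing $Q_1$, whose area is at most about $2\sqrt2\min(c,e)\min(a,b)$.

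In the remaining compact range of parameters, I would fix $c,e$ and maximise in $(a,b)$ subject to $ab=1$. Since the intersection only grows by at most the cut corners, I expect a concavity or AM--GM argument in $\log a$ to show the maximum is at $a=b$, and then, by the symmetric argument, at $c=e$. That reduces everything to two squares with side ratio $t$, where the ratio is a one-variable function of $t$ to be checked to peak at $t=1$. This case analysis is the step I expect to require the most care.
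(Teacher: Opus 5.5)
Your reduction steps are sound. Splitting off the coordinates $2,\dots,d-1$ fixed by $\rho_\pm$ (using $|A_i\cap B_i|\le|A_i|^{1/2}|B_i|^{1/2}$) and then applying Brascamp--Lieb--Luttinger to pass to centred intervals is what the paper does, just in the opposite order. You also compute the equality case (two equal squares, regular octagon) correctly.

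The gap is the planar inequality itself, which is the entire content of the lemma. Beyond the equality case, nothing is proved. ``I expect a concavity or AM--GM argument in $\log a$'' is not an argument. ``The intersection only grows by at most the cut corners'' says nothing about where the maximum over $ab=1$ lies. The degenerate-regime bounds are valid, but they only exclude a region; they say nothing about the compact set that remains. As written, $|Q_1\cap Q_2|\le 2(\sqrt2-1)\sqrt{|Q_1||Q_2|}$ is not established for a single non-equal-squares configuration, and the piecewise case analysis you postpone is precisely the proof.

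The paper avoids that case analysis with a tightness reduction.
\begin{itemize}
\item Let $h_1,\dots,h_4$ be the support values of $Q_1\cap Q_2$ in the directions $e_1$, $e_2$, $(e_1+e_2)/\sqrt2$, $(e_2-e_1)/\sqrt2$.
\item Replace $Q_1$ by $[-h_1,h_1]\times[-h_2,h_2]$ and $Q_2$ by the rotated copy of $[-h_3,h_3]\times[-h_4,h_4]$. The intersection is unchanged and the right-hand side can only decrease.
\item Now all eight support lines touch, so one formula holds: $|Q_1\cap Q_2|=2\sqrt2(h_1+h_2)(h_3+h_4)-2\sum_i h_i^2$. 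This is compared with $8(\sqrt2-1)\sqrt{h_1h_2h_3h_4}$.
\item Set $A=\sqrt{h_1h_2}$, $B=\sqrt{h_3h_4}$, $x=h_1+h_2\ge2A$, $y=h_3+h_4\ge2B$. The claim becomes $\sqrt2xy-x^2-y^2+2A^2+2B^2-4(\sqrt2-1)AB\le0$.
\item The quadratic part is negative definite, so only the boundary $x=2A$ (or $y=2B$) needs checking. There, completing the square in $y$ gives at most $-2(A-B)^2$ or at most $AB(4-3\sqrt2)$, according as $2B\ge\sqrt2A$ or not.
\end{itemize}

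Your own route can also be completed, because your symmetrisation claim is in fact true.
\begin{itemize}
\item For $g(a)=|([-a,a]\times[-a^{-1},a^{-1}])\cap Q_2|$ one has $g'(a)=2L_v(a)-2a^{-2}L_h(a)$. Here $L_v$ and $L_h$ are the lengths of $Q_2$ along the right and top edges of the rectangle.
\item Writing $Q_2=\{|y_1+y_2|\le\gamma,\ |y_2-y_1|\le\epsilon\}$ with $\gamma\le\epsilon$ (without loss of generality), an explicit chord computation gives $g'(a)\le0$ for $a\ge1$.
\item Doing the same for $Q_2$ reduces to two squares. With $t=c/a\in[1/\sqrt2,\sqrt2]$ this is $4-2(2-\sqrt2t)^2-8(\sqrt2-1)t=-4(t-1)^2\le0$; outside that range one square contains the other and the ratio is at most $1/(4-2\sqrt2)$.
\end{itemize}
These verifications are what the proof consists of, and your proposal does not contain them.
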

\begin{remark}
The inequality \eqref{ineq:P1P2sharp} is of Brascamp--Lieb type and directly implies \eqref{ineq:P1P2} with $C=2(\sqrt 2-1)$. Moreover, the constant $2(\sqrt 2-1)$ is sharp, as may be seen with the example $\mathcal{P}_1=\mathcal{P}_2=[-1,1]^d$. 
\end{remark}
\begin{proof}
    By the Brascamp--Lieb--Luttinger rearrangement inequality \cite[Theorem 1.2]{BLL}, we may assume that $\mathcal P_1,\mathcal P_2$ are rectangular boxes centered at the origin. Since $\rho_{\pm} e_j=e_j$ for $j=2,\hdots,d-1$, \eqref{ineq:P1P2sharp} reduces to the corresponding estimate in dimension $d=2$. Thus, it remains to show the desired inequality \eqref{ineq:P1P2sharp} for all rectangular boxes $\mathcal P_1,\mathcal P_2\subset \mathbb R^2$ when $\rho$ is a rotation through $\pi/4$. 

For $i=1,2,3,4$, set
\[
u_1=e_1, \quad  u_2=e_2, \quad  u_3=\frac{\sqrt2(e_1+e_2)}{2},  \quad u_4=\frac{\sqrt2(e_2-e_1)}{2},
\]
and define
\[
h_i:=\sup_{y\in \mathcal P_1\cap \rho\mathcal P_2}\langle y, u_i\rangle;
\]
this is the support function of $P_1\cap \rho\mathcal P_2$.
Without loss of generality, we may assume that $h_i=\sup_{y\in\mathcal P_1}\langle y, u_i\rangle$ for $i=1,2$ and $h_i=\sup_{y\in\rho\mathcal P_2}\langle y, u_i\rangle$ for $i=3,4$. Indeed, we may replace $\mathcal P_1$ and $\mathcal P_2$ by $[-h_1,h_1]\times [-h_2,h_2]$ and $[-h_3,h_3]\times[-h_4,h_4]$ respectively.

By applying the formula for the area of a polygon (see \cite{FI}), we have
\[
|\mathcal P_1\cap \rho\mathcal P_2|=2\sqrt2(h_1+h_2)(h_3+h_4)-2\sum_{i=1}^4h_i^2.
\]
Since $|\mathcal P_1|^{1/2}|\mathcal P_2|^{1/2}=4\sqrt{h_1h_2h_3h_4}$, it is enough to show that
\[
\sqrt2(h_1+h_2)(h_3+h_4)-\sum_{i=1}^4h_i^2-4(\sqrt2 -1)\sqrt{h_1h_2h_3h_4}\le 0. 
\]
To this end, define 
\[
A=\sqrt{h_1h_2}, \quad B=\sqrt{h_3h_4}, \quad x=h_1+h_2, \quad y=h_3+h_4,
\]
and set
\[
F_{A,B}(x,y):=\sqrt 2xy-x^2-y^2+2A^2+2B^2-4(\sqrt2-1)AB.
\]
It suffices to verify that
\begin{equation}\label{Fneg}
F_{A,B}(x,y)\le0
\end{equation}
whenever $x\ge2A>0$ and $y\ge 2B>0$.
Since $\nabla F_{A,B}\neq0$ for $x,y>0$, and $$\lim_{|(x,y)|\rightarrow \infty }F(x,y)=-\infty,$$ it 
is enough to establish \eqref{Fneg} on the boundary -- that is, when $x=2A$ or $y=2B$.
When $x=2A$, we have for $y\ge 2B$,
\[
\begin{aligned}
    F_{A,B}(2A,y)&=-(y-\sqrt2 A)^2+2B^2-4(\sqrt2-1)AB \\
    &\le \begin{cases}
        2B^2-4(\sqrt2-1)AB, & 2B\le \sqrt 2A, \\
        -2(A-B)^2, &2B\ge\sqrt 2 A. 
    \end{cases}
\end{aligned}
\]
For $2B\le \sqrt 2A$, one has $2B^2-4(\sqrt2-1)AB\le AB(4-3\sqrt 2)<0$, and so $F_{A,B}(2A,y)\le0$. By symmetry $F_{A,B}(x,2B)\le0$ for $x\geq 2A$, and so \eqref{Fneg} follows on the whole domain.
\end{proof}

By appealing to the continuity of the map $\rho\mapsto |\mathcal P_1\cap \rho\mathcal P_2|$, this is quickly seen to imply the following uniform version of Lemma \ref{lem:P1P2}.
\begin{lemma}\label{cor:intprod}
        Let $\mathcal P_1$ be a cartesian product set in $\mathbb R^d$. Then there exists a constant $c_d>0$, depending only on $d$, and a set $U\subset SO(d)$ with $\mathrm{d}\rho(U)>c_d$, such that  \eqref{ineq:P1P2} holds with $C=\sqrt 2-1/2$
    for all $\rho\in U$ and all cartesian product sets $\mathcal P_2$. Here $\mathrm{d}\rho$ is the uniform probability measure on $SO(d)$.
\end{lemma}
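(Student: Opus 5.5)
The plan is to combine the Brascamp--Lieb--Luttinger reduction from the proof of Lemma \ref{lem:P1P2} with a perturbation argument around the rotation $\rho_0:=\rho_+$, taking $U$ to be a small neighbourhood $\{\rho_0\sigma:\ \|\sigma-I\|<\eta\}$. Here $\eta=\eta(d)>0$ is chosen later, and $c_d$ is the Haar measure of this neighbourhood.

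\emph{Step 1 (reduction to centred boxes).} Write
\[
|\mathcal P_1\cap\rho\mathcal P_2|=\int_{\mathbb R^d}\prod_{i=1}^d 1_{E_i}(x_i)\prod_{i=1}^d 1_{F_i}(\langle\rho^{-1}x,e_i\rangle)\,\mathrm dx .
\]
This is an integral of one-dimensional functions composed with linear forms, so \cite[Theorem 1.2]{BLL} lets me replace every $E_i,F_i$ by a centred interval of the same length. This does not decrease the left-hand side and leaves $|\mathcal P_1|$ and $|\mathcal P_2|$ unchanged. So it suffices to prove \eqref{ineq:P1P2} with $C=\sqrt2-\frac12$ for centred boxes $\mathcal P_1=\prod[-a_i,a_i]$ and $\mathcal P_2=\prod[-b_i,b_i]$, and for all $\rho\in U$.

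\emph{Step 2 (margin at $\rho_0$).} By Lemma \ref{lem:P1P2} and the AM--GM inequality,
\[
|\mathcal P_1\cap\rho_0\mathcal P_2|\le 2(\sqrt2-1)\max\{|\mathcal P_1|,|\mathcal P_2|\},
\]
and $2(\sqrt2-1)<\sqrt2-\frac12$. This leaves a fixed gap of $\frac{3}{2}-\sqrt2$, which is what the perturbation will absorb.

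\emph{Step 3 (perturbation, uniformly in the boxes).} Both sides are invariant under a common dilation, so I normalise $\max\{|\mathcal P_1|,|\mathcal P_2|\}=1$ and consider
\[
\Phi(\rho;a,b):=|\mathcal P_1\cap\rho\mathcal P_2| .
\]
For a fixed pair of boxes, $\Phi$ is continuous in $\rho$. The difficulty is making this uniform in $(a,b)$, because the parameter space is noncompact. I would argue by contradiction: suppose there are $\sigma_n\to I$ and normalised boxes with $\Phi(\rho_0\sigma_n;a^n,b^n)>\sqrt2-\frac12$. I would then split into two regimes.

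In the first regime all $\log a_i^n$ and $\log b_i^n$ stay bounded. Passing to a subsequence, the boxes converge, and continuity of the volume of the intersection of convex bodies under Hausdorff convergence gives $\Phi(\rho_0;a,b)\ge\sqrt2-\frac12$ in the limit. This contradicts Step 2.

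In the second regime some side lengths degenerate. Here I would use the structure of $\rho_0$, which fixes $e_2,\dots,e_{d-1}$, to factor the problem as in the proof of Lemma \ref{lem:P1P2}. Up to an error controlled by $\|\sigma_n-I\|$ relative to the relevant side lengths, the intersection splits into a product of a $(d-2)$-dimensional part, bounded by $\prod_{i=2}^{d-1}\min\{2a_i,2b_i\}$, and a planar part in the $(e_1,e_d)$-plane. In the planar part, a box that is very elongated in $e_1$ or $e_d$ meets a box tilted by roughly $\pi/4$ in a set of area $o\bigl(\max\{\text{areas}\}\bigr)$. Meanwhile, a very elongated side in a fixed coordinate $e_i$ with $2\le i\le d-1$ forces $\min\{a_i,b_i\}/\max\{a_i,b_i\}\to0$ unless the two boxes degenerate compatibly, which the normalisation by the maximum volume handles.

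\emph{Main obstacle.} I expect Step 3 to be the main obstacle, specifically the degenerate regime. The issue is that a slight rotation $\sigma$ of a very elongated box is not close to that box in the relative (volume-normalised) sense. So the naive continuity of $\rho\mapsto|\mathcal P_1\cap\rho\mathcal P_2|$ is not uniform over all product sets.

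What I would try first is to quantify that error. A rotation by angle $\eta$ moves a box with side lengths $b_i$ by at most about $\eta\max_i b_i$ in each direction. I would show that whenever this exceeds a small multiple of $\min_i b_i$, the intersection with the $\pi/4$-tilted box is already small compared with the maximum volume. That would give a dichotomy: either the aspect ratios are bounded by $M(\eta)$, so compactness applies, or the intersection is small outright.
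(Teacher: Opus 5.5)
Your Steps 1 and 2 are fine, but Step 3 does not go through. The root cause is that you are proving more than the lemma asks. Your $U=\{\rho_0\sigma:\|\sigma-I\|<\eta\}$ is independent of $\mathcal P_1$, so you need \eqref{ineq:P1P2} uniformly over \emph{pairs} of boxes, with the perturbation $\sigma$ acting on the arbitrary box $\mathcal P_2$. Your sketch of the degenerate regime rests on two claims that fail.

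First, the proposed dichotomy is false. For $d\ge3$ take $\mathcal P_1=\mathcal P_2=[-1,1]\times[-L,L]^{d-2}\times[-1,1]$ and $\rho=\rho_0\sigma$, with $\sigma=I$ or $\sigma$ a small rotation of the $(e_1,e_d)$-plane. The intersection is $[-L,L]^{d-2}$ times a planar intersection, so the ratio in \eqref{ineq:P1P2} stays close to $2(\sqrt2-1)$ for every $L$. Large aspect ratio therefore does not make the intersection small, and you cannot reduce to bounded aspect ratios.

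Second, the intersection does not approximately split as a $(d-2)$-dimensional part times a planar part in this regime. Suppose $\sigma$ tilts $e_2$ towards $e_d$ by an angle $\eta$ and $L\approx 1/\eta$. Then, as $x_2$ varies, the $(e_1,e_d)$-cross-sections of $\rho\mathcal P_2$ drift by an amount comparable to their own size. So the error ``controlled by $\|\sigma_n-I\|$ relative to the side lengths'' is of order one.

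Whether or not the uniform statement is true, proving it would need a genuinely different argument. One option is compactness in the normalised matrix $\mathrm{diag}(a)^{-1}\rho\,\mathrm{diag}(b)$, with an analysis of the non-product limit configurations. The proposal does not supply this.

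The idea you are missing is that $U$ may depend on $\mathcal P_1$; only the lower bound $\mathrm{d}\rho(U)>c_d$ must be uniform. The paper uses this to avoid any uniformity over $\mathcal P_2$. Order the half-sides of $\mathcal P_1$ as $a_1\ge\cdots\ge a_d$.

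If $a_1/a_d<16$, the paper perturbs on the $\mathcal P_1$ side. For $\rho'$ in a fixed neighbourhood $U_\varepsilon$ of $I$ one has $\rho'\mathcal P_1\subset(1+\varepsilon)\mathcal P_1$. Hence $|\mathcal P_1\cap(\rho')^*\rho_\pm\mathcal P_2|=|\rho'\mathcal P_1\cap\rho_\pm\mathcal P_2|\le((1+\varepsilon)^d-1)|\mathcal P_1|+|\mathcal P_1\cap\rho_\pm\mathcal P_2|$, and Lemma \ref{lem:P1P2} finishes, uniformly in $\mathcal P_2$.

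If $a_1/a_d\ge16$, take $\rho$ in a neighbourhood of $\rho_+$ on which $\langle\rho e_1,e_1\rangle\ge\frac12$ and $\langle\rho e_1,e_d\rangle\ge\frac12$. When $4b_1\le a_1$, slicing in $x_1$ gives $|\mathcal P_1\cap\rho\mathcal P_2|\le4b_1\prod_{i\ge2}2a_i\le\frac12|\mathcal P_1|$. Otherwise $b_1>4a_d$, and slicing in $\langle x,\rho e_1\rangle$ gives $|\mathcal P_1\cap\rho\mathcal P_2|\le4a_d\prod_{i\ge2}2b_i\le\frac12|\mathcal P_2|$.

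Both cases are elementary and need no compactness over the second box.
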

\begin{proof}
    As before, we may assume that $\mathcal P_1,\mathcal P_2$ are rectangular boxes, which are given by
    \[
    \begin{aligned}
        \mathcal P_1=\{x\in \mathbb R^d:|x_i|\le a_i\}, \quad 
        \mathcal P_2=\{x\in \mathbb R^d:|x_i|\le b_i\},
    \end{aligned}
    \]
    for $a_i,b_i>0$. After permuting the coordinate axes, we may assume that $a_1\ge \cdots \ge a_d$. 

       We first consider the case $a_1/a_d<16$. In this case, we have $a_1\sim a_i$ for all $i=2,\hdots,d$. Since  $2(\sqrt2-1)<C<1$, there exists $\varepsilon>0$ such that  $$(1+\varepsilon)^d-1+2(\sqrt 2-1)<C.$$  Let
       \[
       U_\varepsilon:=\big\{\rho'\in SO(d):|(\rho')_{i,j}|\le \frac \varepsilon{16d} \text{ for }i\neq j\big\}.
       \]
       Since $U_\varepsilon$ contains an open neighborhood of the identity, its measure is positive and depends only on $d$ and $\varepsilon$.
       Moreover, since $a_1/a_d<16$, one can observe that   
       \[
       |(\rho'x)_{i,i}|\le a_i+\sum_{j\neq i}\frac{\varepsilon a_j}{16d}\le (1+\varepsilon)a_i,
       \]
       for $\rho'\in U_\varepsilon$ and $x\in \mathcal P_1$. Thus, $\rho'\mathcal P_1$ is contained in $(1+\varepsilon )\mathcal P_1$. From this we conclude that for $\rho'\in U_\varepsilon$,
    \[\begin{aligned}
            |\rho'\mathcal P_1\cap \rho\mathcal P_2|&\le |\rho'\mathcal P_1\setminus \mathcal P_1|+|\mathcal P_1\cap \rho\mathcal P_2|\\
            &\le ((1+\varepsilon)^d-1)|\mathcal P_1|+|\mathcal P_1\cap \rho\mathcal P_2|.
    \end{aligned}
    \]
  By Lemma \ref{lem:P1P2}, for $\rho=\rho_\pm$, we have 
  \[
  |\mathcal P_1\cap (\rho')^*\rho\mathcal P_2|\le C\max\{|\mathcal P_1|,|\mathcal P_2|\},
  \]
  for all $\rho'\in U_\varepsilon$ and all $\mathcal P_2$. By choosing $U=\{(\rho')^*\rho\in SO(d):\rho=\rho_\pm, \rho'\in U_\varepsilon\}$, since $\mathrm d\rho(U)\ge\mathrm d\rho(U_\varepsilon)$, the desired result follows. 
  
    We now treat the case $a_1/a_d\ge 16$. Since the maps $\rho\mapsto \langle \rho e_1,e_1\rangle$ and $\rho\mapsto \langle \rho e_1,e_d\rangle$ are continuous, there exists a neighborhood $U\subset SO(d)$ of $\rho_{+}$, depending only on $d$,  upon which $\langle \rho e_1,e_1\rangle\ge 1/2 $ and $\langle \rho e_1,e_d\rangle\ge 1/2 $. For $\rho\in U$, considering that $\rho\mathcal P_2\subset \{x: |\langle \rho e_1,x\rangle| \le b_1 \}$ and $\langle \rho e_1,e_1\rangle\ge 1/2 $, we observe that for any $(x_2,\hdots, x_d)$,
    \begin{equation}\label{slow}
    |\{x_1\in \mathbb R: x\in \mathcal P_1\cap \rho\mathcal P_2\}|\le 4b_1,
    \end{equation}
    which implies that $|\mathcal P_1\cap \rho\mathcal P_2|\le 4b_1 \prod _{i=2}^d2a_i$.
    Thus, when $4b_{1}\le a_1$, \eqref{ineq:P1P2} with $C=1/2$ holds for all $\rho\in U$. Now assume that $b_{1}>a_1/4\ge 4a_d$. 
Similarly, since $\mathcal P_1\subset \{x:|x_d|\le a_d\}$ and $\langle \rho e_1,e_d\rangle\ge 1/2 $, we obtain that
    \[
    |\{\langle x,\rho e_1\rangle\in \mathbb R: x\in \mathcal P_1\cap \rho\mathcal P_2\}|\le 4a_d,
    \]
    provided that $\langle x, \rho e_i\rangle $ is fixed for $i=2,\hdots,d$.
Thus, it follows that $|\mathcal P_1\cap \rho\mathcal P_2|\le 4a_d \prod _{i=2}^d2b_i\le(1/2)\prod _{i=1}^d2b_i$, which yields the desired inequality \eqref{ineq:P1P2}  for all  $\rho\in U$.
\end{proof}

We are now ready to prove Theorem \ref{thm:main}.
%Let $\mathrm{d}\rho$ be the uniform probability measure on $SO(d)$. 
Suppose for a contradiction that the inequality \eqref{abljrot}
admits an extremising sequence. This means that
for all $\delta>0$ there exists a function $f\in L^p(\mathbb R^d)$ such that
\begin{equation}
    \label{ineq:lowerboundass}
\log\|f\|_{L^p(\mathbb R^d)}\ge -\delta^2+\int_{SO(d)}\frac 1d \sum_{j=1}^d \log\|T_{k,d}(f\circ \rho)(\pi_j,\cdot)\|_{L^q(\pi_j^\perp)}\mathrm{d}\rho,
\end{equation}
where $\pi_j$ is defined in \eqref{def:pij}.
Define the set
\[
U_*:=\Big\{\rho \in SO(d):\log\|f\|_{L^p(\mathbb R^d)}\ge -\delta+ \frac 1 d\sum_{j=1}^d \log\|T_{k,d}(f\circ \rho)(\pi_j,\cdot)\|_{L^q(\pi_j^\perp)}\Big\}.
\]
We claim that $U_*$ satisfies
\[
\mathrm{d}\rho(U_*)\ge 1-\delta.
\]
Indeed, by definition and \eqref{ablj}, it follows that
\[
\begin{aligned}
    \int_{SO(d)} \frac 1 d\sum_{j=1}^d\log\|T_{k,d}(f\circ \rho)(\pi_j,\cdot)\|_{L^q(\pi_j^\perp)}\mathrm{d}\rho&\ge\int_{U_*^c}\delta+\log\|f\|_{L^p(\mathbb R^d)}\mathrm{d}\rho+\int_{U_*}\log\|f\|_{L^p(\mathbb R^d)}\mathrm{d}\rho  \\
    &=\delta(1-\mathrm d\rho(U_*))+\log\|f\|_{L^p(\mathbb R^d)}.
\end{aligned}
\]
Thus, the assumption \eqref{ineq:lowerboundass} gives the desired bound for the size of $U_*$.

Now, for $\rho \in U_*$, we have
\[
\|f\circ \rho\|_{L^p(\mathbb R^d)}\ge e^{-\delta}\prod_{j=1}^d\|T_{k,d}(f\circ \rho)(\pi_j,\cdot)\|_{L^q(\pi_j^\perp)}^{\frac 1d}=e^{-\delta}\prod_{j=1}^d\|( P_{\pi_j^\perp})_*(f\circ \rho)\|_{L^q(\pi_j^\perp)}^{\frac 1d},
\]
which implies \eqref{ineq:stableadfcond} for the appropriate (adjoint Finner) data.
Therefore, by Theorem \ref{thm:adFstable}, for $\varepsilon>0$, if $\delta$ is sufficiently small then $f$ satisfies
\begin{equation}\label{use of Thm}
\|f\circ \rho-c_\rho1_{\mathcal P_\rho}\|_p\le \varepsilon \|f\|_p,
\end{equation}
for all $\rho \in U_*$, where $\mathcal P_\rho$ is a cartesian product set in $\mathbb R^d$. Moreover, for $\rho_1\in U_*$, Lemma \ref{cor:intprod} tells us that
\[
|\mathcal P_{\rho_1}\cap \rho\mathcal P_2|\le C\max\{|\mathcal P_{\rho_1}|,|\mathcal P_2|\}
\]
holds for all $\mathcal P_2$ and $\rho$ in a set  $U\subset SO(d)$, whose measure is positive. Thus, 
if $\delta$ is sufficiently small, there exists $\rho_2\in U_*$ such that the above inequality holds for $\rho=\rho_1^{-1}\rho_2$. By this, there exists a constant $c=(1-C)^{1/p}>0$ such that
\[
\|c_{\rho_1}1_{\mathcal P_{\rho_1}}\circ \rho_1^{-1}-c_{\rho_2}1_{\mathcal P_{\rho_2}}\circ \rho_2^{-1}\|_p\ge c\min_{j=1,2}\|c_{\rho_j}1_{\mathcal P_{\rho_j}}\|_p\ge c(1-\varepsilon^p)^{\frac 1p}\|f\|_p.
\]
This gives a contradiction when $\varepsilon$ is chosen sufficiently small, since the left-hand side of the above inequality is bounded above by $2^{1/p}\varepsilon \|f\|_p$ by two applications of \eqref{use of Thm}. Consequently, there exists a constant $\delta>0$ such that
\[
\log\|f\|_{L^p(\mathbb R^d)}\le -\delta^2+\int_{SO(d)} \frac 1d \sum_{j=1}^d\log\|T_{k,d}(f\circ \rho)(\pi_j,\cdot)\|_{L^q(\pi_j^\perp)}\mathrm{d}\rho,
\]
for all $f\in L^p(\mathbb R^d)$, which gives \eqref{ineqmain} with $D=\delta^2$ by \eqref{Tkdrho} and the fact that  $\mathrm{SO}(d)$ acts transitively on $\mathcal{G}_{k,d}$.

\section{$k$-plane estimates from Gaussian stability for the Brascamp--Lieb inequality}\label{sec:gs}

In this section we show how Theorem \ref{thm:main} may be reduced to a different stability problem in the wider theory of the Brascamp--Lieb inequalities, combined with an ingredient from frame theory. %We refer to \cite{BCCT,BT, BC:stable} for further context. 
We begin by recalling the relevant aspects of Brascamp--Lieb theory.

The Brascamp--Lieb inequalities, originating in \cite{BL:Y}, are a broad generalisation of a range of familiar functional inequalities in Euclidean analysis, including both Finner's inequality and Young's convolution inequality. They take the form
\begin{equation}\label{bl}
    \int_{H}\prod_{j=1}^m f_j(B_jx)^{c_j}\mathrm{d}x\leq \BL(\mathbf{B},\mathbf{c})\prod_{j=1}^m\left(\int_{H_j}f_j\right)^{c_j},
\end{equation}
where $B_j:H\rightarrow H_j$ is a linear map between Euclidean spaces, $c_j\in [0,1]$, and $f_j$ is a nonnegative integrable function on $H_j$, for each $1\leq j\leq m$. We refer to the tuple $(\mathbf{B},\mathbf{c})=((B_j),(c_j))$ as the \textit{Brascamp--Lieb datum}, and the optimal constant $\BL(\mathbf{B},\mathbf{c})$ as the \textit{Brascamp--Lieb constant}. Unlike in the special case of Finner's inequality \eqref{f}, where $c_j=1/q_j$ and $\BL(\mathbf{B},\mathbf{c})=1$, the maps $B_j$ are not required to have any coordinate structure. 
Necessary and sufficient conditions for the finiteness of $\BL(\mathbf{B},\mathbf{c})$ are well known (see \cite{BCCT}), with the simplest necessary condition
\begin{equation}\label{BLscaling}
\dim(H)=\sum_{j=1}^mc_j\dim H_j,
\end{equation}
arising from scaling considerations. Central to the theory of the Brascamp--Lieb inequalities is Lieb's theorem \cite{L}, which states that the optimal constant $\BL(\mathbf{B},\mathbf{c})$ in \eqref{bl} is saturated by centred Gaussian inputs $f_1,\hdots, f_m$.

Of particular relevance to us are the \textit{geometric Brascamp--Lieb data}, introduced by Ball and Barthe \cite{Ball,Barthe}. A datum $(\mathbf{B},\mathbf{c})$ is said to be \textit{geometric} if $B_jB_j^*$ is the identity on $H_j$ for each $j$ and
\begin{equation}\label{gblframe}
\sum_{j=1}^m c_jB_j^*B_j=I,
\end{equation}
the identity on $H$. Finner's inequality \eqref{f} also fits within this geometric framework, and it was shown in \cite{Ball,Barthe} that $\BL(\mathbf{B},\mathbf{c})=1$ holds for all geometric data. Given this fact, it is quickly verified that the Gaussian functions $f_j(x)=e^{-|x|^2}$ exhibit equality in \eqref{bl}; here $|\cdot|$ is the Euclidean norm on $H_j$. 

\begin{comment}
A Brascamp--Lieb inequality is a bound on the map $$(f_1,\hdots,f_m)\mapsto (f_1\circ B_1,\hdots,f_m\circ B_m),$$ formed by pulling back $f_j$ by $B_j$ for each $j$. The adjoint Brascamp--Lieb inequalities are bounds on the map
\begin{equation}\label{entro}
f\mapsto ((B_1)_*f,\hdots,(B_m)_*f),
\end{equation}
where, for each $1\leq j\leq m$, the function $(B_j)_*f:H_j\rightarrow \mathbb{R}_+$ is the pushforward of a function $f:H\rightarrow\mathbb{R}_+$ by $B_j$. 
Inequalities for \eqref{entro} were initially obtained in terms of entropies in \cite{CC}, generalising the classical entropy subadditivity inequality of Shearer.
In the context of Lebesgue spaces, a
\end{comment}
Brascamp--Lieb inequalities also have adjoints. These take the form
\begin{equation}\label{abl}
    \|f\|_{L^p(H)}\leq \ABL(\mathbf{B},\mathbf{c},\mathbf{\theta},p)\prod_{j=1}^m\|(B_j)_*f\|_{L^{p_j}(H_j)}^{\theta_j},
\end{equation}
where for each $1\leq j\leq m$, the function $(B_j)_*f:H_j\rightarrow \mathbb{R}_+$ is the pushforward of a function $f:H\rightarrow\mathbb{R}_+$ by $B_j$. 
Here the exponents $p,p_1,\hdots,p_m\in (0,1]$ and $\mathbf{\theta}=(\theta_1,\hdots,\theta_m)\in (0,1]^m$ are related to the Brascamp--Lieb exponents $\mathbf{c}$ via the compatibility condition
\begin{equation}\label{ablexponents}
    c_j\left(1-\frac{1}{p}\right)=\theta_j\left(1-\frac{1}{p_j}\right)
\end{equation}
for each $j$,
along with the (necessary) homogeneity condition $\theta_1+\cdots +\theta_m=1$; see \cite{BT}. Evidently the adjoint Brascamp--Lieb inequality \eqref{abl} incorporates the adjoint Finner inequality \eqref{af}, where $\ABL(\mathbf{B},\mathbf{c},\mathbf{\theta},p)=1$ for such data.
It was shown in \cite{BT} that \begin{equation}\label{btmain}
\ABL(\mathbf{B},\mathbf{c},\mathbf{\theta},p)\leq \BL(\mathbf{B},\mathbf{c})^{\frac{1}{p}-1}
\end{equation}
in general,
so that in particular $\ABL(\mathbf{B},\mathbf{c},\mathbf{\theta},p)\leq 1$ whenever the data $(\mathbf{B},\mathbf{c})$ is geometric.

As discussed in Section \ref{Sect:preliminaries}, Theorem \ref{thm:main} would follow if one could replace the use of Finner's inequality in the proof of Theorem \ref{thm:BTlog} with a different Brascamp--Lieb inequality whose adjoint constant is strictly less than 1. To find a suitable Brascamp--Lieb inequality with adjoint constant less than $1$ we appeal to some recent progress on stable forms of the Brascamp--Lieb inequality.
Following \cite[Section 7]{BT}, a Brascamp--Lieb datum is said to be \textit{Gaussian-stable} (or \textit{positive Gaussian-stable}, following \cite{BC:stable}) if $\BL(\mathbf{B},\mathbf{c})<\infty$ and for every $\varepsilon>0$ there exists $\delta>0$ such that $$\dist_1(f_j,\mathfrak{G}_{\mathbb{R}_+}(H_j)):=\inf_{g\in\mathfrak{G}_{\mathbb{R}_+}(H_j)}\|f_j-g\|_{L^1(H_j)}\leq\varepsilon\|f_j\|_{L^1(H_j)}$$ for each $1\leq j\leq m$ whenever the inputs $f_1,\hdots, f_m$ satisfy
$$
\int_{H}\prod_{j=1}^m f_j(B_jx)^{c_j}\mathrm{d}x\geq (1-\delta)\BL(\mathbf{B},\mathbf{c})\prod_{j=1}^m\left(\int_{H_j}f_j\right)^{c_j}.
$$
Here $\mathfrak{G}_{\mathbb{R}_+}(H_j)$ denotes the set of Gaussian functions of the form $ce^{-Q(x)+x\cdot v}$, where $Q$ is a symmetric positive-definite quadratic form on $H_j$, $v\in H_j$ and $c>0$.
The key feature of Gaussian stability is that it produces improved bounds for adjoint Brascamp--Lieb constants.
\begin{theorem}[\cite{BT}]
If $(\mathbf{B},\mathbf{c})$ is Gaussian-stable then there is strict inequality in \eqref{btmain} whenever $p<1$.
\end{theorem}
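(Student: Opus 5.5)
We need to prove: if $(\mathbf{B},\mathbf{c})$ is Gaussian-stable and $p<1$, then $\ABL(\mathbf{B},\mathbf{c},\theta,p)<\BL(\mathbf{B},\mathbf{c})^{1/p-1}$. The plan is to retrace the proof of \eqref{btmain} from \cite{BT}, exactly as in the first display of the proof of Theorem \ref{thm:adFstable}, and show that near-equality forces a contradiction.

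\emph{Step 1: the factorisation.} For nonnegative $f$ on $H$, set $g_j=((B_j)_*f)^{p_jc_j}$, so that $\|g_j\|_{1/c_j}^{1/c_j}=\|(B_j)_*f\|_{p_j}^{p_j}$. Write $f^p=\prod_j(f^{\theta_j}g_j^{(p-1)/p}\circ B_j)^{p}\cdot\prod_j(g_j\circ B_j)^{1-p}$ and apply H\"older with exponents $1/p$ and $1/(1-p)$. Then:
\begin{itemize}
\item Apply H\"older with weights $\theta_j$ to the first factor, together with the pushforward identity \eqref{pbpf}.
\item Apply the Brascamp--Lieb inequality \eqref{bl} to the second factor, with inputs $g_j^{1/c_j}$.
\end{itemize}
Using \eqref{ablexponents}, this yields $\|f\|_p\le \BL^{1/p-1}\prod_j\|(B_j)_*f\|_{p_j}^{\theta_j}$.

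\emph{Step 2: consequences of near-equality.} Suppose that strict inequality fails. Then there is a sequence $f_n$ with ratio tending to $\BL^{1/p-1}$, so each of the three inequalities is nearly saturated:
\begin{itemize}
\item From the Brascamp--Lieb step and Gaussian stability, each $g_j^{1/c_j}=((B_j)_*f_n)^{p_j}$ is $L^1$-close, relative to its norm, to a Gaussian $G_j$ on $H_j$.
\item From the outer H\"older step and Theorem \ref{thm:aldaz}, $f_n^p$ is $L^2$-close, in the normalised square-root sense, to a multiple of $\prod_j (g_j\circ B_j)$.
\end{itemize}
Combining these as in Step 1 of the proof of Theorem \ref{thm:adFstable}, with Remark \ref{Rem:convert} used to convert between $L^2$ and $L^p$ type closeness, we get that $f_n^p$ is close to a multiple of $\prod_j G_j^{c_j}\circ B_j$. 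That product is a Gaussian on $H$, which is nondegenerate by finiteness of $\BL$. Hence $f_n$ is $L^p$-close to a Gaussian $\gamma_n$. Moreover, $(B_j)_*f_n$ is close to $G_j^{1/p_j}$, which is itself a Gaussian.

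\emph{Step 3: the Gaussian obstruction (main obstacle).} The inequality \eqref{abl} is strict on Gaussians when $p<1$. For $\gamma$ a Gaussian with covariance data, both $\|\gamma\|_p$ and $\|(B_j)_*\gamma\|_{p_j}$ are explicit, and the Gaussian adjoint ratio equals $\prod_j p_j^{-\theta_j\dim H_j/(2p_j)}\,p^{\dim H/(2p)}$ times a determinant ratio. That determinant ratio is bounded by the Gaussian BL constant to the power $1/p-1$ (Lieb's theorem). The remaining scalar factor is strictly less than $1$ for $p<1$, by strict concavity of $t\mapsto \log t/t$-type expressions, i.e. the same computation that underlies \eqref{inter5}.

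\emph{Step 4: passing to the limit.} The difficulty is that the closeness in Step 2 is in $L^p$ of $f$ but only of marginals in a weak sense, and marginals are unstable in $L^{p_j}$ for $p_j<1$, as the remark following Theorem \ref{thm:adfstable2} warns. I would therefore avoid transferring the marginal norms directly. Instead, use the near-equality structure itself:
\begin{itemize}
\item Gaussian stability gives $\|(B_j)_*f_n\|_{p_j}^{p_j}\approx\|G_j\|_1$ directly.
\item Step 2 gives $\|f_n\|_p\approx\|\gamma_n\|_p$.
\end{itemize}
The ratio then converges to the Gaussian ratio, which is strictly below $\BL^{1/p-1}$ by Step 3. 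This contradicts the assumption that the ratio tends to $\BL^{1/p-1}$, and so establishes strict inequality in \eqref{btmain}.
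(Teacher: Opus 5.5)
Step 4 has a genuine gap: it never links the Gaussians $G_j$ from stability to the Gaussian $\gamma_n$ that approximates $f_n$. Steps 1--3 are sound in outline.

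What Steps 2 and 4 give you is $\|f_n\|_p\approx\|\gamma_n\|_p$ and $\|(B_j)_*f_n\|_{p_j}\approx\|G_j\|_1^{1/p_j}$. So the adjoint ratio of $f_n$ is asymptotic to $\|\gamma_n\|_p/\prod_j\|G_j\|_1^{\theta_j/p_j}$. Step 3, however, bounds a different quantity, namely $\|\gamma_n\|_p/\prod_j\|(B_j)_*\gamma_n\|_{p_j}^{\theta_j}$. Nothing you have established relates the $G_j$ to the marginals of $\gamma_n$, and they are genuinely different objects. In the geometric case with $G_j(y)=e^{-\pi|y|^2}$ for all $j$, one has $\gamma_n^p\propto e^{-\pi|x|^2}$, so $((B_j)_*\gamma_n)^{p_j}\propto e^{-\pi p_j|y|^2/p}$. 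This is not a multiple of $G_j$ when $p_j\neq p$. In fact, under your contradiction hypothesis the quantity $\|\gamma_n\|_p/\prod_j\|G_j\|_1^{\theta_j/p_j}$ itself tends to $\BL^{1/p-1}$. The sentence ``the ratio then converges to the Gaussian ratio'' silently identifies the two denominators, which is exactly the marginal transfer you said you would avoid. So no contradiction has been produced.

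The missing idea is that only a \emph{one-sided} comparison of marginals is needed, and that direction is stable because pushforward is monotone.
\begin{itemize}
\item Start from $\|f_n^p-\gamma_n^p\|_1\le\eta\|\gamma_n\|_p^p$ with $\eta\to0$. The set $A_n=\{f_n\ge(1-\eta^{1/2})\gamma_n\}$ satisfies $\int_{A_n^c}\gamma_n^p\lesssim\eta^{1/2}\int\gamma_n^p$, and pointwise $(B_j)_*f_n\ge(1-\eta^{1/2})(B_j)_*(\gamma_n1_{A_n})$.
\item Since $\gamma_n$ is Gaussian, $\int_{A_n^c}\gamma_n\le\|\gamma_n\|_\infty^{1-p}\int_{A_n^c}\gamma_n^p$, which is a small fraction of $\int\gamma_n$.
\item A tail argument for the Gaussian $(B_j)_*\gamma_n$ upgrades this to $\|(B_j)_*(\gamma_n1_{A_n^c})\|_{p_j}=o(\|(B_j)_*\gamma_n\|_{p_j})$. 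The argument is affine-invariant, hence uniform in $n$.
\item The $p_j$-triangle inequality then gives $\|(B_j)_*\gamma_n\|_{p_j}\le(1+o(1))\|(B_j)_*f_n\|_{p_j}$.
\end{itemize}
Hence the adjoint ratio of $\gamma_n$ is at least $(1-o(1))$ times that of $f_n$, and only at this point does Step 3 yield a contradiction.

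Step 3 itself also needs a real proof of the scalar bound for general data. The computation behind \eqref{inter5} covers only the equal-weight $k$-plane case. With $u=1/p-1$ and $n_j=\dim H_j$, the logarithm of the scalar factor is $\frac{u}{2}\sum_j n_jc_j\,[\psi(u)-\psi(c_ju/\theta_j)]$, where $\psi(t)=(1+t)\log(1+t)/t$. Apply Jensen's inequality to the convex decreasing function $r\mapsto\psi(u/r)$, at the points $r_j=\theta_j/c_j$ with weights $n_jc_j/\dim H$. This shows the logarithm is negative, because $\sum_j\theta_jn_j<\dim H$ as soon as some $B_j$ is non-injective, which Gaussian stability forces. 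Likewise, bounding the determinant factor by $\BL^{1/p-1}$ needs an AM--GM step, using \eqref{BLscaling}, in addition to Lieb's theorem.
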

Consequently, any Gaussian stable geometric Brascamp--Lieb datum satisfies
$$\ABL(\mathbf{B},\mathbf{c},\mathbf{\theta},p)<1$$
whenever $p<1$. Arguing as in our proof of Theorem \ref{thm:BTlog} in Section \ref{Sect:preliminaries}, we obtain the following conditional statement that yields the conclusion of Theorem \ref{thm:main}.
\begin{theorem}\label{maincond}
    Suppose that there exists a Gaussian-stable co-rank-$k$ geometric Brascamp--Lieb datum $(\mathbf{B},\mathbf{c})$ on $\mathbb{R}^d$ with $c_1=\cdots =c_m$. If $0<p,q\leq 1$ satisfy the scaling condition \eqref{scalingk}, then for all $p<1$ there exists $D=D(p,k,d)>0$ such that 
\begin{equation}\label{ineqmain2}
\log\|f\|_{L^p(\mathbb{R}^d)}\leq -D+\int_{\mathcal{G}_{k,d}}\log\|T_{k,d}f(\pi,\cdot)\|_{L^q(\pi^\perp)}\mathrm{d}\pi
    \end{equation} 
    for all measurable functions $f:\mathbb{R}^d\rightarrow\mathbb{R}_+$.
\end{theorem}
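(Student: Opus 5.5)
The plan is to mimic the proof of Theorem \ref{thm:BTlog}, replacing the adjoint Finner inequality \eqref{alw} by the adjoint Brascamp--Lieb inequality for the given datum. Write $B_j:\mathbb{R}^d\rightarrow H_j$ with $B_jB_j^*=I$. Since the datum is co-rank-$k$, each $H_j$ has dimension $d-k$, and $B_j$ is (after identifying $H_j$ with $\ker(B_j)^\perp$) the orthogonal projection onto $\pi_j^\perp$, where $\pi_j:=\ker B_j\in\mathcal{G}_{k,d}$. Hence $(B_j)_*f(y)=T_{k,d}f(\pi_j,y)$, exactly as for the coordinate projections in Section \ref{Sect:preliminaries}. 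Taking the trace in \eqref{gblframe} gives $d=\sum_j c_j(d-k)=mc(d-k)$ with $c=c_1=\cdots=c_m$, so $c=\frac{d}{m(d-k)}$. We then choose $\theta_j=1/m$, which satisfies the homogeneity condition. The compatibility condition \eqref{ablexponents} becomes
\[
\frac{d}{d-k}\left(1-\frac1p\right)=1-\frac1{p_j},
\]
which is precisely the scaling condition \eqref{scalingk} with $p_j=q$ for every $j$. In particular $0<q\le1$ and $q<1$ when $p<1$, as required.

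Next we invoke the theorem from \cite{BT} stated just above. Gaussian stability gives strict inequality in \eqref{btmain}, and $\BL(\mathbf{B},\mathbf{c})=1$ for geometric data, so $\ABL(\mathbf{B},\mathbf{c},\theta,p)=:e^{-D}<1$ with $D=D(p,k,d)>0$ for $p<1$. (Strictly, $D$ depends on the chosen datum; we fix one datum for each $(k,d)$.) Taking logarithms in \eqref{abl} yields
\[
\log\|f\|_{L^p}\le -D+\frac1m\sum_{j=1}^m\log\|T_{k,d}f(\pi_j,\cdot)\|_{L^q(\pi_j^\perp)}
\]
for every nonnegative $f$.

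Finally we average over rotations. Apply the inequality to $f\circ\rho$ for $\rho\in SO(d)$, using $\|f\circ\rho\|_p=\|f\|_p$ and the identity \eqref{Tkdrho}. Integrating against the uniform probability measure $\mathrm{d}\rho$ gives
\[
\log\|f\|_p\le -D+\frac1m\sum_{j}\int_{SO(d)}\log\|T_{k,d}f(\rho^*\pi_j,\cdot)\|_{L^q}\,\mathrm{d}\rho.
\]
For each $j$, the pushforward of $\mathrm{d}\rho$ under $\rho\mapsto\rho^*\pi_j$ is an $O(d)$-invariant probability measure on $\mathcal{G}_{k,d}$, because $SO(d)$ acts transitively (and $O(d)$-invariance follows from uniqueness of the $SO(d)$-invariant measure, which is also $O(d)$-invariant). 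It therefore equals $\mathrm{d}\pi$, and each summand is the integral in \eqref{ineqmain2}, which proves the claim.

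The main point needing care is not analytic. One must check that co-rank $k$ together with geometricity really makes each $B_j$ a partial isometry whose kernel is a $k$-plane, so that the pushforward coincides with the $k$-plane transform with the correct normalisation: $B_jB_j^*=I$ ensures the fibre measure is $k$-dimensional Hausdorff measure. Beyond that, the only points to confirm are the trace computation forcing $p_j=q$ and the handling of possible infinities in the logarithms, which is the same as in the proof of Theorem \ref{thm:BTlog}.
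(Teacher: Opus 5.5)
Your proposal is correct and follows the paper's route. Gaussian stability together with $\BL(\mathbf{B},\mathbf{c})=1$ gives $\ABL<1$ through the strict form of \eqref{btmain}, and the rotation-averaging argument from the proof of Theorem \ref{thm:BTlog} is then run with $\theta_j=1/m$ and $p_j=q$. You also spell out details the paper leaves implicit: the trace computation $c=d/(m(d-k))$ that forces $p_j=q$, and the identification of $(B_j)_*f$ with $T_{k,d}f(\ker B_j,\cdot)$ via $B_jB_j^*=I$.
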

As we have seen in the proof of Theorem \ref{thm:BTlog}, it is not at all difficult to exhibit a co-rank-$k$ geometric Brascamp--Lieb datum in $\mathbb{R}^d$ for any $k$ and $d$. There are many instances of Finner's inequality that have this property. The difficulty lies in finding examples that are Gaussian-stable, although one might conjecture that this is also always possible. For this we shall appeal to the following sufficient condition for Gaussian-stability, established recently by Christ and the first author.
\begin{theorem}[\cite{BC:stable}]\label{thm:BC}
If $\frac{1}{2}<c_j<1$ for all $1\leq j\leq m$, then $(\mathbf{B},\mathbf{c})$ is Gaussian-stable.
\end{theorem}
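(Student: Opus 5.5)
This result is quoted from \cite{BC:stable}. The route I would take is the standard one: compactness modulo symmetries, followed by a rigidity statement for exact extremisers. First I reduce to the geometric case. Since $\BL(\mathbf{B},\mathbf{c})<\infty$ and $0<c_j<1$, the datum is simple, so by the theory of \cite{BCCT} it admits Gaussian extremisers and is equivalent to a geometric datum. Concretely, there are invertible maps $C\colon H\to H$ and $C_j\colon H_j\to H_j$ transforming $(\mathbf{B},\mathbf{c})$ into a geometric datum satisfying \eqref{gblframe}. This change of variables preserves the near-extremal hypothesis, the normalised $L^1$ distances and the class $\mathfrak{G}_{\mathbb{R}_+}(H_j)$, so nothing is lost. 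I then argue by contradiction. Suppose there is a sequence $(f_j^{(n)})$ with deficit $\delta_n\to0$ but $\dist_1(f_{j}^{(n)},\mathfrak{G}_{\mathbb{R}_+}(H_{j}))\ge\varepsilon\|f_j^{(n)}\|_1$ for some $j$. I normalise $\|f_j^{(n)}\|_1=1$ and write $g_j=f_j^{c_j}\in L^{1/c_j}$, so that the exponents $r_j=1/c_j$ lie in $(1,2)$.

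The central step is to show that, after applying the symmetries of the problem, some subsequence converges in $L^1$. The relevant symmetries are the joint dilations and translations acting on $H$, composed with $B_j$, together with the Gaussian ``modulations'' $f_j\mapsto f_j e^{x\cdot v_j}$ that arise from linear exponentials $e^{x\cdot w}$ on $H$. I would take the condition $c_j>\frac12$, equivalently $r_j<2$, as what drives compactness. The plan is to rewrite the form $\int\prod_j g_j(B_jx)\,\mathrm{d}x$ on the Fourier side. There it is controlled by $\prod_j\|\widehat{g_j}\|_{L^{r_j'}}$ through Hausdorff--Young, since $r_j'>2$. I would then invoke Beckner's sharp Hausdorff--Young inequality, whose extremisers are Gaussians and whose near-extremisers are precompact modulo symmetries (Christ's stability theorem). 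This yields a factorisation in which near-extremality of the Brascamp--Lieb form forces each $g_j$ to be a near-extremiser of a sharp Hausdorff--Young or Young-type inequality on $H_j$. That in turn gives precompactness modulo symmetries, by the profile decomposition underlying Christ's theorem.

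If this factorisation proves awkward, my fallback is a tensor-power argument in the spirit of Lieb's proof. I would pass to $(f_j\otimes f_j)\circ R$ with $R$ the $45^\circ$ rotation on $H\oplus H$, use the rotation invariance of the doubled geometric datum, and then apply a convolution/Young-type stability statement, valid in the exponent range $r_j\in(1,2)$, to conclude that $f_j*\tilde f_j$ is almost a dilate of $f_j$. Iterating this would force a near-Gaussian limit.

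Once a limit $(f_j)$ in $L^1$ is extracted, continuity of the form (via Fatou and the inequality \eqref{bl} itself) makes $(f_j)$ an exact extremiser. The rigidity step is that, for a simple datum with $c_j<1$, every extremiser is Gaussian, by Valdimarsson's characterisation or Lieb's argument; this is exactly where $c_j<1$ is needed, since $c_j=1$ allows non-Gaussian factors. The limit therefore lies in $\mathfrak{G}_{\mathbb{R}_+}(H_j)$, contradicting the $\varepsilon$-separation. I expect the compactness modulo symmetries in the previous paragraph to be the main obstacle. Excluding splitting or concentration of mass along subspaces that are invariant for some $B_j$ requires a uniform, quantitative gain in every partial (sub-datum) inequality. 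My first attempt at this would go through the Fourier/Hausdorff--Young factorisation, since it is precisely there that $c_j>\frac12$ supplies the needed strict convexity.
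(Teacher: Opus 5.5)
The paper does not prove this statement; it imports it from \cite{BC:stable}, so your proposal has to stand on its own. You have found the right mechanism: Hausdorff--Young at the exponents $r_j=1/c_j\in(1,2)$, combined with Christ's sharpened Hausdorff--Young theorem. However, the step that carries all the weight is asserted rather than proved. Write $K=\{(\xi_j)\in\bigoplus_j H_j:\sum_j B_j^*\xi_j=0\}$, and let $\pi_j:K\to H_j$ be the coordinate projections. Then $\int_H\prod_j g_j(B_jx)\,\mathrm{d}x=\int_K\prod_j\widehat{g_j}(\pi_j\eta)\,\mathrm{d}\sigma(\eta)$. Bounding the right-hand side by $\prod_j\|\widehat{g_j}\|_{r_j'}$ is not Hausdorff--Young: it is the Brascamp--Lieb inequality for the dual datum $((\pi_j)_j,(1-c_j)_j)$ on $K$. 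Near-extremality in \eqref{bl} forces each $g_j$ to nearly extremise Hausdorff--Young only if the whole chain is sharp. That is, $\BL(\mathbf{B},\mathbf{c})$ must equal the normalising constant of $\sigma$, times the dual Brascamp--Lieb constant, times $\prod_j A_{r_j}^{\dim H_j}$. Your ``factorisation'' needs exactly this identity, and you neither state nor justify it. For geometric data the identity is true. With $f_j=e^{-\pi|y|^2}$ one has $|\widehat{g_j}|^{r_j'}\propto e^{-\pi|\xi|^2/(c_j(1-c_j))}$. With $Bx=(B_jx)_j$ and $C=\bigoplus_j c_jI_{H_j}$, the operator $C^{1/2}BB^*C^{1/2}$ is an orthogonal projection with diagonal blocks $c_jI$. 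From this one checks that $\pi_jM^{-1}\pi_j^*=c_j(1-c_j)I$, where $M=\sum_j c_j^{-1}\pi_j^*\pi_j$ on $K$. This is the geometric (critical-point) condition that makes these Gaussians extremisers of the dual datum, so every inequality in the chain is an equality at the standard Gaussian.

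Once that identity is in place, your compactness-and-rigidity architecture is unnecessary, and the joint-limit step in it is not justified. Suppose $\|\widehat{g_j}\|_{r_j'}\ge(1-\delta)A_{r_j}^{\dim H_j}\|g_j\|_{r_j}$. Christ's theorem then puts $g_j$ close in $L^{r_j}$ to a complex Gaussian $G_j$. Since $g_j\ge0$, we have $|g_j-G_j|\ge|g_j-|G_j||$, so $g_j$ is close to the positive Gaussian $|G_j|$. The bound $\|g_j^{r_j}-|G_j|^{r_j}\|_1\le r_j\|g_j-|G_j|\|_{r_j}\|g_j+|G_j|\|_{r_j}^{r_j-1}$ then gives the $L^1$ conclusion for $f_j=g_j^{r_j}$. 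That is all Gaussian-stability asks, one $j$ at a time. Your joint-limit step, by contrast, breaks down. Christ's theorem normalises each $g_j$ by its own affine change of variables and modulation on $H_j$, chosen independently in $j$. Such independent normalisations are not symmetries of the form in \eqref{bl}, so the Fatou/continuity argument does not produce an exact extremiser. The real exponentials $f_j\mapsto f_je^{x\cdot v_j}$ are not symmetries either, since they change $\|f_j\|_1$. Finally, your claim that $\BL(\mathbf{B},\mathbf{c})<\infty$ and $0<c_j<1$ force simplicity is false. The Young datum on $\mathbb{R}^k\times\mathbb{R}^k$ with $k\ge2$, mentioned after Proposition \ref{prop:tff}, has proper critical subspaces. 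For the geometric data used here no reduction is needed, and on the direct route no rigidity is needed either. If you did want rigidity, Valdimarsson's characterisation would give it: on a nonzero independent subspace one would need $\sum_{j\in J}c_j=1$, which is impossible when every $c_j\in(\frac12,1)$.
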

Combining this theorem with the following proposition allows us to recover certain cases of Theorem \ref{thm:main} via Theorem \ref{maincond}.
\begin{proposition}\label{prop:tff}
    Suppose that $d/2\leq k\leq d-1$, and that $2k-d$ divides $d$ if $2/5<(d-k)/d<1/2$. Then there exists a co-rank-$k$ geometric Brascamp--Lieb datum $(\mathbf{B},\mathbf{c})$ on $\mathbb{R}^d$ satisfying $\frac{1}{2}<c_1=\cdots= c_m<1$.
\end{proposition}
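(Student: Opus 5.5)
The plan is to recast the statement as the existence of a tight fusion frame and then invoke the existence theory of Casazza, Fickus, Mixon, Wang and Zhou \cite{CFMWZ}. Write $r:=d-k$, so that $1\le r\le d/2$. A co-rank-$k$ geometric datum with equal exponents $c_1=\cdots=c_m=c$ consists of partial isometries $B_j:\mathbb{R}^d\to H_j$ with $\dim H_j=r$ and $B_jB_j^*=I_{H_j}$. Then $\Pi_j:=B_j^*B_j$ is the orthogonal projection onto an $r$-dimensional subspace $W_j$ of $\mathbb{R}^d$, and \eqref{gblframe} reads $\sum_{j=1}^m \Pi_j=c^{-1}I$. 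Conversely, any such family of subspaces gives a geometric datum. So the problem is exactly to find a tight fusion frame of $m$ equidimensional $r$-dimensional subspaces of $\mathbb{R}^d$. Taking traces forces $mr=d/c$, that is, $c=d/(mr)$. The constraint $\frac12<c<1$ therefore becomes the purely numerical condition
\[
\frac{d}{r}<m<\frac{2d}{r}.
\]
The scaling condition \eqref{BLscaling} is then automatic.

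Since $d/r\ge 2$, the open interval $(d/r,2d/r)$ has length at least $2$ and contains several integers. I will take $m$ as large as possible in it, namely the largest integer strictly below $2d/r$. I then quote the existence theorem for real tight fusion frames (constructed by spectral tetris and Naimark complements). It guarantees a tight fusion frame of $m$ subspaces of dimension $r$ in $\mathbb{R}^d$ whenever $m\ge \lceil d/r\rceil+2$, and also in various boundary cases. Two regimes follow:
\begin{itemize}
\item If $r/d\le 2/5$, then $d/r\ge 5/2$, and one checks that $\lceil d/r\rceil+2<2d/r$. So an admissible $m$ exists and the existence theorem applies directly. The case $r=d/2$ (that is, $k=d/2$) also needs checking: there $m=3$, and I will exhibit three $r$-planes explicitly, using three equiangular lines in $\mathbb{R}^2$ tensored with $\mathbb{R}^{r}$.
\item In the window $2/5<r/d<1/2$ we have $d/r\in(2,5/2)$, so the only candidates are $m\in\{3,4\}$. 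The generic sufficient condition needs $m\ge 5$ and fails, which is why the divisibility hypothesis is required.
\end{itemize}

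The main obstacle is this window. Set $u:=2k-d=d-2r$; the hypothesis is that $u$ divides $d$, say $d=tu$ with $t\ge 5$ an integer, so that $r=(t-1)u/2$. My first attempt uses a Naimark complement. From a tight fusion frame of $m$ subspaces of dimension $r'$ in $\mathbb{R}^{d'}$ one obtains one of $m$ subspaces of dimension $mr'-d'$ in $\mathbb{R}^{mr'-d'}\cdots$; more usefully, the orthogonal complements form a tight fusion frame of dimension $d'-r'$ whenever $m\ge 2$. With $m=3$ I need $3$ subspaces of dimension $r$ with $\sum\Pi_j=\frac{3r}{d}I$. Passing to complements converts this to $3$ subspaces of dimension $d-r=k$. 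Passing instead to the Naimark complement converts it to $3$ subspaces of dimension $3r-d$ in $\mathbb{R}^{2d-3r}$. I would use the divisibility to write these dimensions as $u$ times the dimensions of a small base configuration in $\mathbb{R}^{t'}$. I would then tensor that base frame with $I_{\mathbb{R}^u}$, which preserves tightness and multiplies all dimensions by $u$. The base configuration should be one of the cases already covered by \cite{CFMWZ} (or by three equiangular lines). I expect the bookkeeping of which base triple $(m,r',d')$ to use, and checking that it lies in the known existence range, to be the delicate part.

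Finally I set $c=d/(mr)$ and read off the datum $(\mathbf{B},\mathbf{c})$.
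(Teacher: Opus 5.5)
Your reduction to tight fusion frames ($c=d/(mr)$, admissible $m$ with $d/r<m<2d/r$) is correct and follows the paper's route, as does your use of the generic existence theorem for $r/d<2/5$. Your explicit three-plane construction for $r=d/2$ is also correct. The boundary case $r/d=2/5$, however, is misfiled. At $d/r=5/2$ one has $\lceil d/r\rceil+2=5=2d/r$, so the inequality $\lceil d/r\rceil+2<2d/r$ you claim fails. The only admissible values are $m\in\{3,4\}$, and neither reaches the generic threshold $m\ge d/r+2$. Write $(n,L,N)$ for a tight fusion frame of $n$ subspaces of dimension $L$ in $\mathbb{R}^N$. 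The paper handles this case like the window: it takes $m=4$ and uses the reduction $(4,2u,5u)\leftrightarrow(4,u,3u)$ with $u=d/5$.

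The substantive gap is the window $2/5<r/d<1/2$. Your attempt starts from $m=3$, which is impossible. By part (2) of Theorem \ref{thm:CFMWZ1}, a frame with $r\nmid d$ and $r<d/2$ needs $m\ge d/r+1>3$. Directly: the Naimark complement of a $(3,r,d)$ frame would be a $(3,r,3r-d)$ frame, but $3r-d<r$ because $d>2r$. Hence $m=4$ is forced. The whole content of the proposition in this regime is then the existence of a $(4,r,d)$ frame, which you defer as ``bookkeeping''.

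The missing idea is to iterate the Naimark-plus-orthogonal-complement equivalence $(n,L,N)\leftrightarrow(n,(n-1)L-N,nL-N)$ of Theorem \ref{thm:CFMWZ2} with $n=4$. Set $M:=N-2L=2k-d$. The equivalence sends $(4,L,N)$ to $(4,L-M,N-2M)$, preserves $M$, and is valid while $L>M$ (this is the condition $L+N<4L$). There are two outcomes.

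If $M\mid L$, the iteration ends at $(4,M,3M)$, which exists by part (1). Otherwise it ends at $(4,l,2l+M)$, where $0<l<M$ is the remainder of $L$ modulo $M$. By parts (1)--(2), this frame exists iff $l\mid M$ and $2l\ge M$, i.e. $l=M/2$. So a $(4,r,d)$ frame exists iff $2k-d$ divides $2r$, equivalently $2k-d$ divides $d$. This is exactly where the divisibility hypothesis enters; it is the content of Theorem \ref{thm:4ln} and the corollary following it.

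Your tensoring trick does not bypass this, even with $m=4$. For $t=d/(2k-d)$ odd it reduces the problem to a $(4,\frac{t-1}{2},t)$ frame. For $t\ge 7$ that frame is covered by neither part (1) nor part (3), and it again needs the same iterated reduction.
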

    In the special case $d=2k$, such an example of a Brascamp--Lieb datum is already provided by Young's convolution inequality
$$
\int_{\mathbb{R}^{k}}\int_{\mathbb{R}^k}f(x)^{2/3}g(y)^{2/3}h(x+y)^{2/3}\mathrm{d}x\mathrm{d}y\lesssim\left(\int_{\mathbb{R}^k}f\right)^{2/3}\left(\int_{\mathbb{R}^k}g\right)^{2/3}\left(\int_{\mathbb{R}^k}h\right)^{2/3},
$$
which being Gaussian-extremisable is equivalent to a co-rank-$k$ geometric Brascamp--Lieb inequality (see \cite[Proposition 3.6]{BCCT}) with exponents $c_1=c_2=c_3=\frac{2}{3}$. By considering an iterated convolution, a similar conclusion may be reached whenever $d-k$ divides $d$. This condition is stronger than the hypothesis of Proposition \ref{prop:tff}, as may be seen with the example $(d,k)=(5,3)$. 
In the next subsection we establish Proposition \ref{prop:tff} in general by first interpreting it as a statement in frame theory. The key ingredient then becomes the characterisation of tight fusion frames established in \cite{CFMWZ}. 
\begin{remark}
Gaussian-stability was established in the special case of Young's convolution inequality in \cite{C:stable Y}, without the assumption that $1/2<c_1,c_2,c_3<1$. 
\end{remark}

\begin{remark}
    At present, this approach, based on the adjoint Brascamp--Lieb constant, establishes Theorem \ref{thm:main} only under the assumption in Proposition \ref{prop:tff}. Its advantage, however, is that it naturally yields quantitative bounds, since  $D(p,k,d)\ge-\log \ABL$. 
    %Moreover, the optimal constant $D(p,k,d)$ may be interpreted as (the logarithm of) an adjoint Brascamp--Lieb constant in the setting of the \textit{continuous} Brascamp--Lieb data of Barthe \cite{Barthe cts}. 
\end{remark}

\subsection{Proof of Proposition \ref{prop:tff}}\label{Sect:fusion} 

We begin by observing that the geometric Brascamp--Lieb condition \eqref{gblframe}, when specialised to coincident exponents $c_1=\cdots=c_m$, is a \textit{frame condition}. The question of the existence of such frames is important in frame theory, to which we now turn. 

Let $P_j:\mathbb R^d\rightarrow \mathbb R^d$ be a projection whose image is $\pi_j^\perp$, where $\pi_j$ is a $k$-dimensional subspace of $\mathbb R^d$, which says that $\pi_j^\perp$ is a $(d-k)$-dimensional subspace.
For an integer $n\ge1$, we say that a collection $\{P_1,\hdots,P_n\}$ is a \textit{$(n,d-k,d)$-tight fusion frame} if
\[
\sum_{j=1}^{n}P_j=\frac{n(d-k)}{d}I.
\]
Thus, the existence of an $(n,d-k,d)$-tight fusion frame directly implies the existence of a geometric Brascamp--Lieb datum $(\mathbf B,\mathbf c)$ satisfying $B_j^*B_j=P_j$ and $c_j=d/(n(d-k))$. Consequently, Proposition \ref{prop:tff} reduces to verifying the existence of an $(n,d-k,d)$-tight fusion frame satisfying $n(d-k)/d\in(1,2)$ under the assumptions of Proposition \ref{prop:tff}.

In \cite{CFMWZ}, the authors completely characterise the tuples $(n,d-k,d)$ for which an $(n,d-k,d)$-tight fusion frame exists in the complex setting (i.e. where $\mathbb{R}$ is replaced with $\mathbb{C}$). As commented in \cite[Remark 1.16]{NS}, by \cite[Theorem 20]{Fulton}, it also provides a complete characterisation in the required real setting.
\begin{theorem}[\cite{CFMWZ}]\label{thm:CFMWZ1}
    Let $n,k,d$ be positive integers satisfying $k<d$. Then the following hold.
    \begin{enumerate}
        \item Assume that $d-k$ divides $d$. Then there exists an $(n,d-k,d)$-tight fusion frame if and only if $n\ge d/(d-k)$.
        \item Assume that $d-k$ does not divide $d$ and $d-k<d/2$. If there exists an $(n,d-k,d)$-tight fusion frame, then $n\ge d/(d-k)+1$.
        \item Assume that $d-k$ does not divide $d$ and $d-k<d/2$. If $n\ge d/(d-k)+2$, then there exists an $(n,d-k,d)$-tight fusion frame.
    \end{enumerate}
\end{theorem}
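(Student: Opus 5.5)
Since this is an external result from frame theory, the plan is to reconstruct the argument of \cite{CFMWZ} from two transformations of tight fusion frames, together with a trace count. Write $\ell=d-k$. If $\{P_j\}_{j=1}^n$ is an $(n,\ell,d)$-tight fusion frame with bound $\lambda=n\ell/d$, two operations will be used:
\begin{itemize}
\item[(a)] the \emph{spatial complement} $\{I-P_j\}$, which is an $(n,d-\ell,d)$-tight fusion frame with bound $n-\lambda$;
\item[(b)] the \emph{Naimark complement}, which is an $(n,\ell,n\ell-d)$-tight fusion frame. It is obtained by viewing the (rescaled) synthesis operator $\lambda^{-1/2}[B_1^*\cdots B_n^*]$ as an isometry $\mathbb{R}^d\to\mathbb{R}^{n\ell}$ and taking the orthogonal complement of its range.
\end{itemize}
Both operations are involutive and preserve tightness. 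The trace identity $\operatorname{tr}\sum_j P_j=n\ell$ gives the basic necessary condition $n\ell\ge d$, since the $\pi_j^\perp$ must span $\mathbb{R}^d$. Finally, a disjoint union of tight fusion frames with the same $\ell$ and $d$ is again a tight fusion frame. As in the paper, the complex-to-real transfer is taken from \cite{Fulton}, so I work in $\mathbb{R}^d$ throughout.

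\textbf{Part (1).} Necessity is the trace bound $n\ge d/\ell$. For sufficiency, let $r=d/\ell$. Splitting an orthonormal basis into $r$ blocks of size $\ell$ gives an $(r,\ell,d)$-tight fusion frame. By the union property it then suffices to construct frames for $r<n<2r$, since every $n\ge r$ is a sum of such values and $r$. For these $n$, I apply the Naimark complement to reduce to the parameters $(n,\ell,n\ell-d)$, where $n\ell-d<d$ and $\ell$ again divides the new dimension. I would then argue by induction on $d$. When $\ell$ is at least half the new dimension, I would use the spatial complement and direct constructions of \emph{spectral tetris} type, which build frames by fitting $2\times2$ rotation blocks along the diagonal.

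\textbf{Part (2).} The idea is to run a descent resembling the Euclidean algorithm. Suppose $\ell\nmid d$, $\ell<d/2$ and $n<d/\ell+1$; then $n=\lceil d/\ell\rceil$ and $\lambda=n\ell/d\in(1,1+\ell/d)$. I pass to the Naimark complement $(n,\ell,d')$ with $d'=n\ell-d<\ell$. There the subspaces have dimension exceeding $d'$, which is impossible unless $n$ is small. More carefully, I follow with the spatial complement $(n,d'-\ell',d')$ and check that the inequality $n<d/\ell+1$ is preserved while the dimension strictly decreases. The descent must terminate in a configuration excluded by a trace or rank count, for instance two subspaces of dimension $>d'/2$ whose intersections force the frame operator to have an eigenvalue different from $\lambda$. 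The delicate point is the bookkeeping showing that non-divisibility persists along the descent.

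\textbf{Part (3), the main obstacle.} This is an explicit construction for every $n\ge d/\ell+2$. I would first use spectral tetris, which succeeds whenever $\lambda=n\ell/d\ge2$. For the remaining window $d/\ell+2\le n<2d/\ell$, I would apply Naimark and spatial complements to map the parameters into the regime $\lambda\ge2$ or into Part (1), checking that the map keeps $n\ge d'/\ell'+2$. I expect that verifying the window closes under these moves, without gaps, will be the hardest step.
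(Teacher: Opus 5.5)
\textbf{Parts (1) and (2).} The paper gives no proof of this statement; it quotes it from \cite{CFMWZ}. Your toolkit (spatial and Naimark complements, unions, spectral tetris) is the one used there.

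Part (1) works as you set it up. The Naimark step sends $(n,\ell,r\ell)$ with $r<n<2r$ to $(n,\ell,(n-r)\ell)$. The induction on $d$ then closes with the trivial base case $d=\ell$ (take all $P_j=I$), so spectral tetris is not needed there.

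Part (2) is already finished at your first step, provided you note that the Naimark complement needs $\lambda>1$. This holds because $\ell\nmid d$ and $n\ell\ge d$. The blocks $C_j:\mathbb{R}^\ell\to\mathbb{R}^{n\ell-d}$ of the complement then satisfy $C_j^*C_j=(1-\lambda^{-1})I_\ell$. That is impossible when $n\ell-d<\ell$, for every $n$, not ``unless $n$ is small''. The descent you then set up (with its undefined $\ell'$) is unnecessary and should be dropped.

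\textbf{The gap in Part (3).} Your claim that spectral tetris produces a tight fusion frame whenever $\lambda\ge 2$ is false. Tetris gives a unit-norm tight frame of $n\ell$ vectors, and these must then be split into $n$ orthonormal $\ell$-tuples; that splitting can fail.

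Take $(n,\ell,d)=(5,3,4)$, so $\lambda=15/4$. The six tetris vectors meeting the second coordinate are the two block vectors on coordinates $1,2$, two copies of $e_2$, and the two block vectors on coordinates $2,3$. These six are pairwise non-orthogonal, so no two can share a subspace, and five subspaces cannot hold them. Yet a $(5,3,4)$-tight fusion frame does exist: it is the spatial complement of five tight lines in $\mathbb{R}^4$.

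The lemma you actually need has the extra hypothesis $\ell\le d/2$, equivalently $n\ge 2\lambda$. Under it, the tetris vectors meeting a fixed coordinate are consecutive in the ordering and fewer than $\lambda+3$ in number. Hence there are at most $n$ of them; the cases $\lambda<3$ need a separate check. So vectors $n$ apart in the ordering have disjoint supports, and dealing them cyclically into $n$ groups gives orthonormal $\ell$-tuples.

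Once this lemma is in place, the window you flag as the main obstacle is immediate. If $\lambda<2$, one Naimark step gives $\lambda'=\lambda/(\lambda-1)>2$ and $d'=n\ell-d\ge 2\ell$. The inequality $d'\ge 2\ell$ is exactly the hypothesis $n\ge d/\ell+2$. If $\lambda\ge 2$, the hypothesis $\ell<d/2$ applies directly.

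So the real difficulty is the orthonormal grouping lemma. Your proposal neither states it correctly nor proves it, and places the difficulty in the wrong step.
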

The authors also provide a reduction procedure that allows one to treat certain cases by reducing them to one of those above.
\begin{theorem}[\cite{CFMWZ}]\label{thm:CFMWZ2}
Let $n,k,d$ be positive integers satisfying $k<d$ and $k+d<nk$. The following are equivalent:    
\begin{enumerate}
    \item There exists an $(n,k,d)$-tight fusion frame.
    \item There exists an $(n,d-k,d)$-tight fusion frame.
    \item There exists an $(n,(n-1)k-d,nk-d)$-tight fusion frame.
\end{enumerate}
\end{theorem}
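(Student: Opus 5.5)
The plan is to obtain all three equivalences from two elementary operations on tight fusion frames. The first is the \emph{spatial complement}, which replaces each subspace by its orthogonal complement in the ambient space. The second is the \emph{Naimark complement}, which passes to the orthogonal complement of the range of the analysis operator in $\mathbb R^{nk}$.

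\textbf{Step 1: (1)$\Leftrightarrow$(2) by spatial complement.} If $P_1,\hdots,P_n$ are rank-$k$ orthogonal projections on $\mathbb R^d$ with $\sum_j P_j=\frac{nk}{d}I$, then the projections $I-P_j$ have rank $d-k$ and satisfy
\[
\sum_{j=1}^n (I-P_j)=\Big(n-\frac{nk}{d}\Big)I=\frac{n(d-k)}{d}I .
\]
This is an $(n,d-k,d)$-tight fusion frame. The converse is the same computation applied in reverse, so this step needs no hypothesis beyond $k<d$.

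\textbf{Step 2: $(n,k,d)$ frames versus $(n,k,nk-d)$ frames by Naimark complement.} Start from an $(n,k,d)$-tight fusion frame and put $a=nk/d$; note $a>1$ because $nk>k+d>d$.
\begin{itemize}
\item Choose $d\times k$ matrices $U_j$ whose columns are orthonormal bases of the ranges of $P_j$, so that $P_j=U_jU_j^*$.
\item Form $T=a^{-1/2}[U_1\ \cdots\ U_n]:\mathbb R^{nk}\to\mathbb R^d$. Then $TT^*=a^{-1}\sum_j U_jU_j^*=I_d$, so $T^*T$ is an orthogonal projection of rank $d$ on $\mathbb R^{nk}$.
\item Write the complementary projection, of rank $nk-d$, as $I-T^*T=S^*S$ with $S:\mathbb R^{nk}\to\mathbb R^{nk-d}$ and $SS^*=I$.
\item Split $S$ into blocks $S=[V_1\ \cdots\ V_n]$ with $V_j$ of size $(nk-d)\times k$. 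Reading off the $j$-th diagonal $k\times k$ block of $S^*S=I-T^*T$ gives
\[
V_j^*V_j=I_k-a^{-1}U_j^*U_j=(1-a^{-1})I_k .
\]
\item Hence $W_j=(1-a^{-1})^{-1/2}V_j$ are isometries, and $Q_j=W_jW_j^*$ are rank-$k$ orthogonal projections on $\mathbb R^{nk-d}$.
\item These satisfy $\sum_j Q_j=(1-a^{-1})^{-1}SS^*=\frac{nk}{nk-d}I$, so $(Q_j)$ is an $(n,k,nk-d)$-tight fusion frame.
\end{itemize}
The construction is symmetric: applying it to an $(n,k,nk-d)$ frame returns a frame in dimension $nk-(nk-d)=d$, provided $nk>nk-d$. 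So $(n,k,d)$ and $(n,k,nk-d)$ frames exist simultaneously.

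\textbf{Step 3: conclude (1)$\Leftrightarrow$(3).} Apply Step 1 inside $\mathbb R^{nk-d}$ to the frame from Step 2. This gives an $(n,(nk-d)-k,nk-d)=(n,(n-1)k-d,nk-d)$-tight fusion frame. The hypothesis $k+d<nk$ is used exactly here: it guarantees $(n-1)k-d\ge1$, so the complemented subspaces are nontrivial and the tuple in (3) is admissible. Reversing Steps 1 and 2 gives the converse.

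The only point requiring care is the block computation $V_j^*V_j=(1-a^{-1})I_k$, together with the normalisation ensuring $a>1$. This is where the Naimark complement could degenerate if $nk=d$; the hypothesis $k+d<nk$ rules that out. The remaining steps are linear-algebraic bookkeeping, and the argument works verbatim over $\mathbb R$, so no appeal to the complex theory is needed for this reduction.
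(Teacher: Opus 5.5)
Your proof is correct. The spatial complement gives (1)$\Leftrightarrow$(2). The Naimark complement, using the block identity $V_j^*V_j=(1-a^{-1})I_k$ with $a=nk/d>1$, gives $(n,k,d)\Leftrightarrow(n,k,nk-d)$. A spatial complement inside $\mathbb{R}^{nk-d}$, which is nondegenerate precisely because $k+d<nk$, then gives (3).

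The paper does not prove this statement; it quotes it from \cite{CFMWZ}. As far as I recall, the reduction there rests on these same two operations, spatial and Naimark complements, so your argument is essentially the standard one.

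Writing it out has one advantage: it is visibly valid over $\mathbb{R}$. The real-versus-complex caveat the paper handles via \cite{Fulton} is therefore needed only for the existence characterisation in Theorem \ref{thm:CFMWZ1}, not for this reduction.
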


Using the above Theorems, for given integers $n,k,d$, we can check for the existence of an $(n,d-k,d)$-tight fusion frame.  From now on, for simplicity, we say $(n,k,d)\in \mathrm{TFF}$ if there exists an $(n,k,d)$-tight fusion frame. By the above theorems, one can easily check the following corollary, which proves Proposition \ref{prop:tff} for the cases $d/(d-k)\ge 5/2$ or $d=2(d-k)$.
\begin{corollary}\label{cor:CFMWZ}
    Let $d/(d-k)\ge5/2$ or $d=2(d-k)$. Then there exists $n\in\mathbb N$ such that $(n,d-k,d)\in \mathrm{TFF}$ and $n(d-k)/d\in(1,2)$.
\end{corollary}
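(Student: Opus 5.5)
Write $r:=d/(d-k)$. Since $n(d-k)/d=n/r$, the condition $n(d-k)/d\in(1,2)$ is equivalent to $r<n<2r$. The plan is to pick an explicit integer $n$ in this interval in each case and certify $(n,d-k,d)\in\mathrm{TFF}$ using Theorems \ref{thm:CFMWZ1} and \ref{thm:CFMWZ2}. I split according to whether $d-k$ divides $d$.

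\emph{Divisible case.} If $d=2(d-k)$ then $r=2$ and $d-k\mid d$. I take $n=3$: then $2<3<4$, and part (1) of Theorem \ref{thm:CFMWZ1} gives $(3,d-k,d)\in\mathrm{TFF}$ since $3\ge r$. More generally, if $d-k\mid d$ and $r\ge 5/2$, then $r\ge 3$ is an integer. I take $n=r+1$, which satisfies $r<r+1<2r$, and part (1) applies again.

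\emph{Non-divisible case with $r\ge 5/2$.} Here $r$ is not an integer, and $d-k<d/2$, so part (3) of Theorem \ref{thm:CFMWZ1} is available and any integer $n\ge r+2$ works. I take $n=\lfloor r\rfloor+3$, the least integer exceeding $r+2$, and check $n<2r$.
\begin{itemize}
\item If $r\ge 3$, then $\lfloor r\rfloor<r$, so $n<r+3\le 2r$.
\item If $5/2<r<3$, then $n=5<2r$.
\end{itemize}

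\emph{The endpoint $r=5/2$.} This is the only real obstacle. The interval $(5/2,5)$ contains $n=3,4$. Part (2) of Theorem \ref{thm:CFMWZ1} rules out $n=3$, since it forces $n\ge 7/2$, and part (3) only covers $n\ge 9/2$, so it does not reach $n=4$. I will instead handle $n=4$ with the reduction of Theorem \ref{thm:CFMWZ2}. Write $d=5t$ and $d-k=2t$, and apply that theorem with its "$k$" equal to $2t$, its "$d$" equal to $5t$, and $n=4$.
\begin{itemize}
\item The hypothesis $2t+5t<4\cdot 2t$ holds.
\item $(4,2t,5t)\in\mathrm{TFF}$ is then equivalent to $(4,\,3\cdot 2t-5t,\,8t-5t)=(4,t,3t)\in\mathrm{TFF}$.
\item Since $t\mid 3t$ and $4\ge 3t/t=3$, part (1) of Theorem \ref{thm:CFMWZ1} gives $(4,t,3t)\in\mathrm{TFF}$.
\end{itemize}
Finally, $4/r=8/5\in(1,2)$, which completes the case $r=5/2$ and hence the proof.
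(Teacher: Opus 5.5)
Your proof is correct and follows essentially the same route as the paper. The divisible case is handled by Theorem \ref{thm:CFMWZ1}(1). The non-divisible case $r>5/2$ takes $n$ to be the least integer exceeding $r+2$ and applies part (3). The endpoint $r=5/2$ uses $n=4$ via the reduction $(4,2t,5t)\leftrightarrow(4,t,3t)$ of Theorem \ref{thm:CFMWZ2} followed by part (1). You additionally verify the bound $n<2r$, which the paper leaves implicit.
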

\begin{proof}
If $d-k$ divides $d$, the result follows immediately from Theorem \ref{thm:CFMWZ1} (1). Hence, from now on, we assume that $d-k$ does not divide $d$.

    First, consider the case $d/(d-k)>5/2$. Let $n_{k,d}$ be the smallest integer larger than $d/(d-k)+2$.
    Then, by Theorem \ref{thm:CFMWZ1} (3), we have $(n_{k,d},d-k,d)\in \mathrm{TFF}$. Moreover, the condition $d/(d-k)>5/2$ implies that $n_{k,d}(d-k)/d<2$ as desired.

    Now, suppose that $d/(d-k)=5/2$. Note that $5$ divides $d$. For this case, set $n=4$. Then, by Theorem \ref{thm:CFMWZ2}, $(4,d-k,d)\in \mathrm{TFF}$ is equivalent to $(4,d/5,3d/5)\in \mathrm{TFF}$. Thus, Theorem \ref{thm:CFMWZ1} (1) gives the desired result combined with $4(d-k)/d\in(1,2)$.
\end{proof}
By Corollary \ref{cor:CFMWZ}, if $(d-k)/d\le 2/5$ or $(d-k)/d=1/2$, there always exists an $(n,d-k,d)$-tight fusion frame satisfying $n(d-k)/d\in (1,2)$. However, for the remaining case $2/5<(d-k)/d<1/2$, such a frame may fail to exist for certain cases. This obstruction arises from the relatively large value of $(d-k)/d$. Indeed, since $(d-k)/d>2/5$, the condition $n(d-k)/d<2$ forces $n\le 4$. On the other hand, Theorem \ref{thm:CFMWZ1} (2) requires $n\ge4$. Thus, in this regime, the only possible choice for the desired frame is $n=4$. In this context, the following theorem provides a characterization of the existence of a $(4,d-k,d)$-tight fusion frame.

\begin{theorem}\label{thm:4ln}
    Let $2/5<L/N<1/2$ and set 
    \[
    m_*=[L/(N-2L)],
    \]
    where $[r]$ denotes the largest integer smaller than or equal to $r$.    Then $(4,L,N)\in \mathrm{TFF}$ if and only if either 
    \begin{itemize}
        \item $N-2L$ divides $L$, or
        \item $L-m_*(N-2L)$ divides $N-2m_*(N-2L)$ and $2L-2m_*(N-2L)\ge N-2L$.
    \end{itemize}
\end{theorem}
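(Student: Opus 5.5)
We plan to iterate the reduction in Theorem \ref{thm:CFMWZ2} until we land in a regime covered by Theorem \ref{thm:CFMWZ1}. With $n=4$, part (3) of Theorem \ref{thm:CFMWZ2} sends $(4,L,N)$ to $(4,3L-N,4L-N)$, and part (2) replaces the middle parameter by its complement, giving $(4,L,4L-N)$ after complementing ($4L-N-(3L-N)=L$). Put $r:=N-2L>0$; then $L/N\in(2/5,1/2)$ means $0<r<L/2$. The map $(L,N)\mapsto(L,4L-N)$ changes $r$ to $4L-N-2L=2L-N=-r$, which is not what we want. We therefore first replace $L$ by its complement $N-L=L+r$, which gives the ratio $(L+r)/(2L+r)>1/2$. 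Applying part (3) to $(4,L+r,2L+r)$ yields $(4,\,3(L+r)-(2L+r),\,4(L+r)-(2L+r))=(4,L+2r,2L+3r)$, and complementing gives $(4,L+r,2L+3r)$. This is the wrong direction, so we run the reduction backwards. The inverse sends $(4,L,N)$ with $L/N\in(2/5,1/2)$ to $(4,\,L-r,\,N-2r)$. Both the hypothesis $k+d<nk$ in the relevant form and the $TFF$ equivalence must be checked at each step; this is routine since $4L>N+L$ iff $3L>N$, which holds. One step thus preserves $r=N-2L$ and lowers $L$ by $r$.

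The core of the proof is to iterate the step $(L,N)\mapsto(L-r,N-2r)$ exactly $m$ times while the ratio stays in $(2/5,1/2)$. The ratio $(L-mr)/(N-2mr)$ decreases away from $1/2$ as $m$ grows; the condition ratio $>2/5$ is $5(L-mr)>2(N-2mr)$, i.e. $L-mr>2r$. We stop at some $m\le m_*$, where the pair is $(L',N')=(L-m r,\,N-2mr)$ and $N'-2L'=r$. We then split into two cases. (a) If $r\mid L$, iterating to $m=L/r-1$ makes $L'=r$ and $N'=3r$, with ratio $1/3$. Here $d-k=L'$ divides $N'$, so by Theorem \ref{thm:CFMWZ1}(1) existence holds since $4\ge3$; this is the first bullet. (b) Otherwise we stop at $m=m_*$, where $L'=L-m_*r\in(0,r)$ and $N'=L'\cdot2+r$. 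The ratio $L'/N'$ is now below $1/3$, unless the step leaves the valid range earlier; in that case we instead stop at the last $m$ with ratio above $1/4$ and consult Theorem \ref{thm:CFMWZ1}. If $L'\mid N'$, part (1) gives existence iff $4\ge N'/L'$, i.e. $2L'\ge N'-2L'=r$, which matches the second bullet. If $L'\nmid N'$ and $L'<N'/2$, part (2) requires $4\ge N'/L'+1$, i.e. $N'\le 3L'$, meaning $r\le L'$. This is impossible since $L'<r$, so non-existence follows.

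The main obstacle is the bookkeeping. We must ensure that the hypothesis of Theorem \ref{thm:CFMWZ2} ($k+d<nk$ with $n=4$) holds at every intermediate stage, and we must match the stopping index exactly to $m_*=[L/r]$, handling the off-by-one between $m_*$ and $m_*-1$ when $r\mid L$. We also need the cases where an intermediate ratio falls in $[1/3,2/5]$, so that neither the iteration nor Theorem \ref{thm:CFMWZ1}(2),(3) applies directly; there we expect to use (3) when $n=4\ge N'/L'+2$, i.e. $L'\ge r$, which cannot occur at the final stage. We will check the two bullet conditions against these endpoint computations in each case.
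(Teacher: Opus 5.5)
The core of your argument is correct and is essentially the paper's proof. Your step $(L,N)\mapsto(L-r,N-2r)$ is exactly the reduction (1)$\Leftrightarrow$(3) of Theorem \ref{thm:CFMWZ2} that you wrote down first, since $(3L-N,4L-N)=(L-r,N-2r)$, so the detour through complements and ``running backwards'' is unnecessary; you then iterate to $m=L/r-1$ when $r\mid L$ and to $m=m_*$ otherwise, and finish with Theorem \ref{thm:CFMWZ1}(1) or (2), just as the paper does.

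Some side remarks are wrong and should be deleted. The step is valid whenever $3L'>N'$, i.e.\ $L'>r$ (ratio above $1/3$, not $2/5$), so every step up to your stopping index is legitimate, and the worries about leaving the valid range early or about ratios in $[1/3,2/5]$ do not arise. Likewise, the appeal to Theorem \ref{thm:CFMWZ1}(3) should go: it would require $4\ge N'/L'+2$, which means $r\le 0$, so it never applies.
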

\begin{proof}
Set $M:=N-2L$.
    By Theorem \ref{thm:CFMWZ2}, we have
    \[
    (4,L,N)\in \mathrm{TFF} \Leftrightarrow (4,L-mM,N-2mM) \in \mathrm{TFF},
    \]
    for $0\le m< m_*$ if $M$ divides $L$, and for $0\le m \le m_*$ otherwise.

    If $M$ divides $L$, choosing $m=L/M-1$ yields that
    \[
    (4,L,N)\in \mathrm{TFF} \Leftrightarrow (4,M,3M) \in \mathrm{TFF}.
    \]
    From Theorem \ref{thm:CFMWZ1} (1), we conclude that $(4,L,N)\in \mathrm{TFF}$.

    Now, assume that $N-2L$ does not divide $L$. 
    First, consider the case when $L-m_*M$ does not divide $N-2m_*M$. Since
    \[
    \frac{N-2m_*M}{L-m_*M}=2+\frac{M}{L-m_*M}>3,
    \]
    Theorem \ref{thm:CFMWZ1} (2) shows that $(4,L-m_*M,N-2m_*M)\notin \mathrm{TFF}$, which implies $(4,L,N)\notin \mathrm{TFF}$. Secondly, assume that $L-m_*M$ divides $N-2m_*M$. Then, by Theorem \ref{thm:CFMWZ1} (1), $(4,L-m_*M,N-2m_*M)\in \mathrm{TFF}$ if and only if $4(L-m_*M)\ge N-2m_*M$, which is equivalent to 
    \begin{equation}
    \label{ineq:lmn}
    2(L-m_*M)\ge N-2L.        
    \end{equation}
    Thus, for this case, $(4,L,N)\in \mathrm{TFF}$ if and only if \eqref{ineq:lmn} holds, which is the desired result.
\end{proof}
Theorem \ref{thm:4ln} gives a necessary and sufficient condition for $(4,d-k,d)\in \mathrm{TFF}$. Although the condition in the theorem may appear complex, it can be reformulated in a simpler form.
\begin{corollary}
    Let $2/5<(d-k)/d<1/2$. Then there exists $n\in \mathbb N$ such that $(n,d-k,d)\in \mathrm{TFF}$ and $n(d-k)/d\in (1,2)$ if and only if $2k-d$ divides $2d-2k$ (if and only if $2k-d$ divides $d$).
    \end{corollary}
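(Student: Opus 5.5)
Since $2/5<(d-k)/d<1/2$, the proof reduces to Theorem \ref{thm:4ln} with $L=d-k$, $N=d$, followed by elementary divisibility arithmetic. Throughout I write $M:=N-2L=2k-d>0$.

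\textbf{Step 1: only $n=4$ can occur.} First, $d-k$ cannot divide $d$, because $N/L\in(2,5/2)$ contains no integer. The condition $n(d-k)/d<2$ forces $n<2N/L<5$, so $n\le 4$. Theorem \ref{thm:CFMWZ1} (2) forces $n\ge N/L+1>3$, so $n\ge 4$. Hence $n=4$. Conversely, $n=4$ always satisfies $4L/N\in(8/5,2)\subset(1,2)$. So the desired $n$ exists if and only if $(4,L,N)\in\mathrm{TFF}$, and Theorem \ref{thm:4ln} characterises this.

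\textbf{Step 2: the second condition of Theorem \ref{thm:4ln}.} Suppose $M\nmid L$. Write $L=m_*M+r$ with $0<r<M$; this is consistent with $m_*=[L/M]$. Then $L-m_*M=r$ and $N-2m_*M=2r+M$. The divisibility condition $r\mid 2r+M$ is equivalent to $r\mid M$. The inequality condition $2L-2m_*M\ge M$ reads $2r\ge M$. A divisor $r$ of $M$ with $M/2\le r<M$ must equal $M/2$. So, when $M\nmid L$, the second condition holds if and only if $M$ is even and $L\equiv M/2 \pmod M$. This is exactly the statement that $M\mid 2L$ but $M\nmid L$.

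\textbf{Step 3: assembling the cases.} Combining Step 2 with the first alternative $M\mid L$, we get $(4,L,N)\in\mathrm{TFF}$ if and only if $M\mid L$ or ($M\nmid L$ and $M\mid 2L$). This is simply $M\mid 2L$, i.e. $2k-d$ divides $2d-2k$. Finally, $d=2L+M$, so $M\mid 2L$ if and only if $M\mid d$. This gives the parenthetical equivalence.

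\textbf{Main obstacle.} The only delicate point is the bookkeeping in Step 2. One must check that the remainder-based description matches the definition of $m_*$ in Theorem \ref{thm:4ln}, which holds since $m_*=[L/M]$ and $r\neq 0$ when $M\nmid L$. One must also keep the case $M\mid L$ separate, as the theorem does, rather than absorbing it into the remainder computation.
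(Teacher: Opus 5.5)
Your proof is correct and follows the paper's route. You reduce to $n=4$ via Theorem \ref{thm:CFMWZ1} (2) and invoke Theorem \ref{thm:4ln}. You then write $L=m_*M+r$ and show that the second alternative forces $r=M/2$, so the combined condition is $M\mid 2L$. You also spell out the reduction to $n=4$ and the equivalence $M\mid 2L\iff M\mid d$, which the paper leaves to the surrounding discussion.
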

\begin{proof}
    It is enough to show that the following are equivalent:
    \begin{enumerate}
        \item $N-2L$ divides $2L$.
        \item either $N-2L$ divides $L$, or otherwise $L-m_*(N-2L)$ divides $N-2m_*(N-2L)$ and $2L-2m_*(N-2L)\ge N-2L$.
    \end{enumerate}

    First, show that (1) implies (2). Suppose that (1) holds. Then either $N-2L$ divides $L$, or
    \[
    L=m_*(N-2L)+\frac{N-2L}{2},
    \]
    for some $m_*\ge0$. For the latter, we have $L-m_*(N-2L)=(N-2L)/2$ divides $N-2m_*(N-2L)=N-2L+2(L-m_*(N-2L))$, and $2L-2m_*(N-2L)=N-2L$. Thus, (2) follows by the fact $m_*=[L/(N-2L)]$.

    Now, prove that (2) yields (1). It is trivial if $N-2L$ divides $L$. From now on, we assume that (2) holds and $N-2L$ does not divide $L$. Set
    \[
    L=m_*(N-2L)+l
    \]
    where $0\le l<N-2L$. Then the assumption can be reformulated by
    $l$ divides $N-2L +2l$ and $2l\ge N-2L$, which means that $l=(N-2L)/2$. Thus, $N-2L$ divides $2L$.
\end{proof}

\begin{remark}
    Let $d-k\le d/2$. If $d\le 16$ or $d-k\le 6$, then there exists $n$ such that $(n,d-k,d)\in \mathrm{TFF}$ and $1<n(d-k)/d<2$. Note that for $(d-k,d)=(7,17)$, there is no $(n,7,17)$-tight fusion frame satisfying $1<n(d-k)/d<2$.
\end{remark}

\section{Adjoint continuous Brascamp--Lieb inequalities}\label{Sect:cts}
%Explain here the interpretation of $D$ as an adjoint continuous Brascamp--Lieb constant. This is essentially in the earlier set of notes, although that is written down in the co-rank-1 setting only. WHAT REMAINS IS AN EARLIER VERSION OF MY NOTES THAT WE CAN MAYBE USE AS A BASE FOR SECTION \ref{Sect:proof}.
The purpose of this section is to provide an interpretation of the positive constant $D(p,k,d)$ from Theorem \ref{thm:main} in terms of the \textit{continuous} Brascamp--Lieb inequalities introduced by Barthe \cite{Barthe cts}. The point is simply that the inclusion of a continuum of projections in the proof of Theorem \ref{thm:main} removes the need for the lossy averaging step. One then sees a direct relationship between $D(p,k,d)$ and the relevant adjoint (continuous) Brascamp--Lieb constant. This perspective was suggested in \cite[Remark 8.3]{BT}.

Let us clarify first the set-up for continuous geometric Brascamp--Lieb inequalities.
By analogy with the usual (geometric Brascamp--Lieb, see \cite{BCCT} for example) set-up, we suppose that we have a measure $\mu$ on $\mathcal{G}_{k,d}$ satisfying an analogue of \eqref{gblframe},
$$
\int_{\mathcal{G}_{k,d}}P_{\pi^\perp}^*P_{\pi^\perp}\mathrm{d}\mu(\pi)=I_d.
$$
%such measures are called \textit{isotropic}.\footnote{I'm not sure where this is written. I thought it was in \cite{Barthe_cts}, but after another quick look I got the impression that they deal with rank-1 projections only -- presumably it is somewhere, perhaps in \cite{CM}. It is instructive to consider a measure $\mu$ that is a sum of suitably normalised dirac masses at the points $e_1,\hdots,e_n$, as one then has classical Loomis--Whitney data.}
Notice that by taking traces we have 
\begin{equation}\label{mass}
\mu(\mathcal{G}_{k,d})=\frac{d}{d-k}.
\end{equation}
The corresponding geometric Brascamp--Lieb inequality is
\begin{equation}\label{contLW}
\int_{\mathbb{R}^d}\exp\left(\int_{\mathcal{G}_{k,d}}\log f_\pi(P_{\pi^\perp} x)\mathrm{d}\mu(\pi)\right)\mathrm{d}x\leq\exp\left(\int_{\mathcal{G}_{k,d}}\log\left(\int_{\pi^\perp}f_\pi\right)\mathrm{d}\mu(\pi)\right),
\end{equation}
holding for all integrable $f_\pi:\pi^\perp\rightarrow\mathbb{R}_+$, $\pi\in\mathcal{G}_{k,d}$; see \cite{Barthe cts} for the rank-one case, and \cite{CEM, ML} in general.
%The constant $1$ should be best-possible, and the gaussian functions $f_\omega(x)=e^{-\pi |x|^2}$ (for all $\omega$) should be among the extremisers. Presumably this can be proved by heat flow, just as in the usual setting of geometric Brascamp--Lieb, although I'm not sure if/where this is written explicitly. I am working on the assumption that \eqref{contLW} is correct, which I believe it is.

The adjoint inequalities also require some setting up. Let $0<p\leq 1$. Given a measure $\mu$, let $\nu$ be a probability measure on $\mathcal{G}_{k,d}$ for which $\mu$ and $\nu$ are mutually absolutely continuous. For $\pi\in\mathcal{G}_{k,d}$ let $0<p_\pi\leq 1$ be such that
\begin{equation}\label{scalcont}
    \left(\frac{1}{p_\pi}-1\right)\mathrm{d}\nu(\pi)=\left(\frac{1}{p}-1\right)\mathrm{d}\mu(\pi).
\end{equation}
Evidently the function $\pi\mapsto p_\pi$ is only defined almost everywhere (with respect to either measure). By analogy with \cite[Definition 1.6]{BT}, the associated adjoint Brascamp--Lieb inequality then takes the form
\begin{equation}\label{contadjLWgendef}
	\log\|f\|_{L^p(\mathbb{R}^d)}\leq \log \ABL+\int_{\mathcal{G}_{k,d}}\log\left(\|(P_{\pi^\perp})_*f\|_{L^{p_\pi}(\pi^\perp)}\right)\mathrm{d}\nu(\pi),
\end{equation}
where $\ABL$ is the least constant for which \eqref{contadjLWgendef} holds for all nonnegative $f\in L^p(\mathbb{R}^d)$. We refer to $\ABL$ as the adjoint (continuous) Brascamp--Lieb constant. Although this depends on the data $p, p_\pi, \mu, \nu$, it is straightforward to see that $\ABL\leq 1$ in general.
\begin{theorem}[cf. \cite{BT}, Theorem 1.11]\label{thm:aclw} Assume that \eqref{contLW} holds. Then $\ABL\leq 1$, that is,
\begin{equation}\label{contadjLWgen}
    \log\|f\|_{L^p(\mathbb{R}^d)}\leq\int_{\mathcal{G}_{k,d}}\log\left(\|(P_{\pi^\perp})_*f\|_{L^{p_\pi}(\pi^\perp)}\right)\mathrm{d}\nu(\pi),
\end{equation}
for all nonnegative functions $f$ on $\mathbb R^d$.
\end{theorem}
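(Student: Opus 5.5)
The plan is to follow the proof of the discrete adjoint inequality in \cite{BT} (Theorem 1.11 there, reproduced in the proof of Theorem \ref{thm:adFstable}). There, one splits $f^p$ into a piece that goes into H\"older's inequality and a piece that goes into the Brascamp--Lieb inequality. Here the finite product over $j$ becomes an exponential of an integral against $\nu$ or $\mu$. We may assume $p<1$, since for $p=1$ both sides agree by Fubini (then $p_\pi=1$ a.e. and $\nu$ is a probability measure). We may also assume the right-hand side of \eqref{contadjLWgen} is finite, and by homogeneity normalise as convenient.

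\textbf{Step 1: the inputs.} For each $\pi$ set $F_\pi:=(P_{\pi^\perp})_*f$. Then define $g_\pi:=F_\pi^{p_\pi}$, normalised so that $\int_{\pi^\perp} g_\pi=\|F_\pi\|_{p_\pi}^{p_\pi}$. This is the analogue of $g_j^{q_j}=((P_j)_*f)^{p_j}$ with $c_j=1/q_j$. Write
\[
f^p=\Big(f\,G^{-\frac{1-p}{p}}\Big)^{p}\,G^{1-p},\qquad G(x):=\exp\Big(\int_{\mathcal{G}_{k,d}}\log g_\pi(P_{\pi^\perp}x)\,\mathrm{d}\mu(\pi)\Big).
\]
H\"older's inequality with exponents $1/p$ and $1/(1-p)$ then gives
\[
\|f\|_p^p\le\Big(\int f\,G^{-\frac{1-p}{p}}\Big)^{p}\Big(\int G\Big)^{1-p}.
\]

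\textbf{Step 2: bounding the two factors.} The second factor is bounded by \eqref{contLW} with inputs $f_\pi=g_\pi$:
\[
\int G\le\exp\Big(\int\log\|F_\pi\|_{p_\pi}^{p_\pi}\,\mathrm{d}\mu\Big).
\]
For the first factor, use \eqref{scalcont} to rewrite
\[
G^{-\frac{1-p}{p}}=\exp\Big(-\int\Big(\tfrac{1}{p_\pi}-1\Big)\log g_\pi\circ P_{\pi^\perp}\,\mathrm{d}\nu\Big)=\exp\Big(\int\log\big(F_\pi^{p_\pi-1}\circ P_{\pi^\perp}\big)\,\mathrm{d}\nu\Big).
\]
Since $\nu$ is a probability measure, Jensen's inequality (the continuous generalised H\"older, i.e.\ AM--GM for $\exp\int\log$) gives
\[
\int f\,G^{-\frac{1-p}{p}}\le\exp\Big(\int\log\Big(\int f\,F_\pi^{p_\pi-1}\circ P_{\pi^\perp}\Big)\mathrm{d}\nu\Big).
\]
By the pushforward duality \eqref{pbpf}, the inner integral equals $\int F_\pi^{p_\pi}=\|F_\pi\|_{p_\pi}^{p_\pi}$.

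\textbf{Step 3: collecting exponents.} Taking logarithms,
\[
p\log\|f\|_p\le\int\Big(p\,p_\pi\,\mathrm{d}\nu+(1-p)\,p_\pi\,\mathrm{d}\mu\Big)\log\|F_\pi\|_{p_\pi}.
\]
By \eqref{scalcont},
\[
(1-p)\,p_\pi\,\mathrm{d}\mu=p\,(1-p_\pi)\,\mathrm{d}\nu,
\]
so the total weight is $p\,\mathrm{d}\nu$. This yields \eqref{contadjLWgen}, exactly as the identity $p_j\theta_j+\frac{p_j}{q_j}(\frac1p-1)=\theta_j$ does in the discrete case.

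\textbf{Main obstacle.} I expect the main difficulty to be the measure-theoretic justification rather than the algebra. Specifically, one must check:
\begin{itemize}
\item joint measurability of $(\pi,x)\mapsto F_\pi(P_{\pi^\perp}x)$;
\item that the exponential integrals are well defined, in particular handling $\log 0$ and $+\infty$, and points where $F_\pi(P_{\pi^\perp}x)=0$ while $f(x)>0$, which is a null set since $f\le$ its marginal support;
\item the legitimacy of Jensen's inequality with $\nu$-dependent integrands.
\end{itemize}
I would handle these by first proving the result for $f$ bounded, compactly supported and strictly positive on a box, where every quantity is finite and continuous in $\pi$, and then extending by monotone convergence/truncation. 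Cases where the right-hand side is infinite are trivial.
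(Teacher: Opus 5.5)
Your proposal is correct and is essentially the paper's proof. Both use the same H\"older splitting with exponents $1/p$ and $1/(1-p)$; your $G^{1-p}$ is exactly the paper's factor $\exp\bigl(p\int\log (f_\pi\circ P_{\pi^\perp})^{1-p_\pi}\,\mathrm{d}\nu\bigr)$ once \eqref{scalcont} is applied. Both then use the continuous H\"older inequality \eqref{contHolder} with pushforward duality for the first factor, \eqref{contLW} for the second, and the same exponent bookkeeping via \eqref{scalcont} at the end, while your separate treatment of $p=1$ and your measure-theoretic caveats (null sets where $f_\pi\circ P_{\pi^\perp}=0<f$, joint measurability, approximation) are extra care that the paper does not spell out.
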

Taking $\mathrm{d}\mu$ to be uniform on $\mathcal{G}_{k,d}$, one is naturally led to the special case in which the map $\pi\rightarrow p_\pi$ is constant and $d\nu$ is the unique $O(d)$-invariant probability measure on $\mathcal{G}_{k,d}$. Then \eqref{mass} and \eqref{scalcont} imply \eqref{scalingk} with $p_\pi\equiv q$. Consequently, inequality \eqref{contadjLWgen} may be viewed as a generalisation of \eqref{Application of Finnerintro}. Moreover, the optimal constant $D(p,k,d)$ in Theorem \ref{thm:main} is precisely $-\log\ABL$. Consequently, Theorem \ref{thm:main} establishes that $\ABL<1$ for this \textit{uniform} continuous Brascamp--Lieb inequality.

For completeness we conclude this section with a proof of Theorem \ref{thm:aclw}. This follows closely that of \cite[Theorem 1.11]{BT}.
\begin{proof}
    We argue as in \cite{BT}, although using the continuous version of H\"older's inequality, which in this context states that
    \begin{equation}\label{contHolder}
        \int_{\mathbb{R}^d}\exp\left(\int_{\mathcal{G}_{k,d}}\log (g_\pi(x))\mathrm{d}\nu(\pi)\right)\mathrm{d}x\leq\exp\left(\int_{\mathcal{G}_{k,d}}\log\left(\int_{\mathbb{R}^d}g_\pi(x)\mathrm{d}x\right)\mathrm{d}\nu(\pi)\right);
    \end{equation}
    the H\"older scaling condition is captured by the fact that $\mathrm{d}\nu$ is a probability measure.
    For each $\pi\in\mathcal{G}_{k,d}$, let $f_\pi:=(P_{\pi^\perp})_*f:\pi^\perp\rightarrow\mathbb{R}_+$ be the pushforward of $f$ by the linear map $P_{\pi^\perp}$. By definition, the pullback-pushforward duality relation holds,
    \begin{equation}\label{duality}
\int_{\pi^\perp}f_\pi \varphi:=\int_{\mathbb{R}^d}f(x)\varphi(P_{\pi^\perp} x)\mathrm{d}x.
    \end{equation}
    
    We begin by using that $\mathrm{d}\nu$ is a probability measure again to observe that $\|f\|_p^p$ is equal to the following:
    \begin{eqnarray*}
        \begin{aligned}
            &\int_{\mathbb{R}^d}f(x)^p\exp\left(p\int_{\mathcal{G}_{k,d}}\log(f_\pi(P_{\pi^\perp} x)^{p_\pi-1})\mathrm{d}\nu(\pi)\right)\exp\left(p\int_{\mathcal{G}_{k,d}}\log(f_\pi(P_{\pi^\perp} x)^{1-p_\pi})\mathrm{d}\nu(\pi)\right)\mathrm{d}x\\
            &=\int_{\mathbb{R}^d}\exp\left(p\int_{\mathcal{G}_{k,d}}\log(f(x)f_\pi(P_{\pi^\perp} x)^{p_\pi-1})\mathrm{d}\nu(\pi)\right)\exp\left(p\int_{\mathcal{G}_{k,d}}\log(f_\pi(P_{\pi^\perp} x)^{1-p_\pi})\mathrm{d}\nu(\pi)\right)\mathrm{d}x.
            %&\leq\left(\int_{\mathbb{R}^n}\exp\left(\log
        \end{aligned}
    \end{eqnarray*}
    By the usual H\"older inequality on $\mathbb{R}^d$ with exponents $1/p$ and $1/(1-p)$, we have
    \begin{eqnarray*}
        \begin{aligned}
            \|f\|_p^p&\leq\left(\int_{\mathbb{R}^d}\exp\left(\int_{\mathcal{G}_{k,d}}\log(f(x)f_\pi(P_{\pi^\perp} x)^{p_\pi-1})\mathrm{d}\nu(\pi)\right)\mathrm{d}x\right)^p\\&\times\left(\int_{\mathbb{R}^d}\exp\left(\frac{p}{1-p}\int_{\mathcal{G}_{k,d}}\log(f_\pi(P_{\pi^\perp} x)^{1-p_\pi})\mathrm{d}\nu(\pi)\right)\mathrm{d}x\right)^{1-p}.
            %&\leq\left(\int_{\mathbb{R}^n}\exp\left(\log
        \end{aligned}
    \end{eqnarray*}
    We apply \eqref{contHolder} to the first factor here, and \eqref{contLW} to the second. Beginning with the first, we have
    \begin{eqnarray}\label{restrictedholder}
        \begin{aligned}
            \int_{\mathbb{R}^d}\exp&\left(\int_{\mathcal{G}_{k,d}}\log(f(x)f_\pi(P_{\pi^\perp} x)^{p_\pi-1})\mathrm{d}\nu(\pi)\right)\mathrm{d}x \\            
            &\leq\exp\left(\int_{\mathcal{G}_{k,d}}\log\left(\int_{\mathbb{R}^d}f(x)f_\pi(P_{\pi^\perp} x)^{p_\pi-1}\mathrm{d}x\right)\mathrm{d}\nu(\pi)\right)\\
            &=\exp\left(\int_{\mathcal{G}_{k,d}}\log\left(\int_{\pi^\perp}f_\pi^{p_\pi}\right)\mathrm{d}\nu(\pi)\right),
        \end{aligned}
    \end{eqnarray}
    by \eqref{duality}.
    For the second factor, we use \eqref{scalcont} and some algebraic manipulation to write
    \begin{eqnarray*}
        \begin{aligned}
            \int_{\mathbb{R}^d}\exp\left(\frac{p}{1-p}\int_{\mathcal{G}_{k,d}}\log(f_\pi(P_{\pi^\perp} x)^{1-p_\pi})\mathrm{d}\nu(\pi)\right)\mathrm{d}x=\int_{\mathbb{R}^d}\exp\left(\int_{\mathcal{G}_{k,d}}\log(f_\pi(P_{\pi^\perp} x)^{p_\pi})\mathrm{d}\mu(\pi)\right)\mathrm{d}x
        \end{aligned}
    \end{eqnarray*}
    and so by \eqref{contLW},
    \begin{eqnarray*}
        \begin{aligned}
            \int_{\mathbb{R}^d}\exp\left(\frac{p}{1-p}\int_{\mathcal{G}_{k,d}}\log(f_\pi(P_{\pi^\perp} x)^{1-p_\pi})\mathrm{d}\nu(\pi)\right)\mathrm{d}x\leq\exp\left(\int_{\mathcal{G}_{k,d}}\log\left(\int_{\pi^\perp}f_\pi^{p_\pi}\right)\mathrm{d}\mu(\pi)\right).
        \end{aligned}
    \end{eqnarray*}
    Putting these two estimates together we have
    \begin{eqnarray*}
        \begin{aligned}
            \|f\|_p^p\leq \exp\left(p\int_{\mathcal{G}_{k,d}}\log\left(\int_{\pi^\perp}f_\pi^{p_\pi}\right)\mathrm{d}\nu(\pi)\right)\exp\left((1-p)\int_{\mathcal{G}_{k,d}}\log\left(\int_{\pi^\perp}f_\pi^{p_\pi}\right)\mathrm{d}\mu(\pi)\right).
        \end{aligned}
    \end{eqnarray*}
    Finally we apply \eqref{scalcont} to change the measure $\mathrm{d}\mu$ to the required probability measure $\mathrm{d}\nu$, and the theorem follows.
\end{proof}

\section{Use of AI}
ChatGPT Sol was used to sharpen the constant in Lemma \ref{lem:P1P2}, which was initially established with a suboptimal constant by the authors using an ad-hoc argument. While the sharp constant is not needed in this paper, it is included for its interesting short proof. All other arguments are due to the authors.  ChatGPT was also used to read through the paper looking for errors and oversights on completion. This led to some typographical corrections, and prompted us to elaborate on Remark \ref{Rem:EFKY} in Section \ref{Sect:stabmeas}.

\end{document}